\providecommand{\tabularnewline}{\\}
\theoremstyle{definition}
\newtheorem{lemma}{Lemma}[section]
\newtheorem{theorem}[lemma]{Theorem}
\newtheorem{corollary}[lemma]{Corollary}
\newtheorem{proposition}[lemma]{Proposition}
\newtheorem{definition}[lemma]{Definition}
\title{Fusion rules for the vertex operator algebra
$V_{L_{2}}^{S_{4}}$}
\author{Junwen Liao}
\affil{Department of Mathematics, University of
California, Santa Cruz, CA 95064 USA}
\begin{document}
\maketitle

\begin{abstract}
This paper determines the fusion rules for the fixed-point vertex operator algebra $V_{L_2}^{S_4}$, where $L_2$ denotes the rank-one even lattice satisfying $(\alpha,\alpha)=2$. The main idea is to identify certain irreducible $V_{\mathbb Z\zeta}^{+}$-modules as direct sums of irreducible $V_{L_2}^{S_4}$-modules. These identifications allow us to compute the relevant entries of the $S$-matrix for $V_{L_2}^{S_4}$. Applying the Verlinde formula, we then obtain the complete fusion rules for $V_{L_2}^{S_4}$.
\end{abstract}

\section{Introduction }
\def\theequation{1.\arabic{equation}}
\setcounter{equation}{0}
  The classification of rational vertex operator algebras of central charge $1$ can be
traced back to the work of Kiritsis. In \cite{K}, Kiritsis showed that, under suitable
modularity assumptions, the $q$-character of a rational vertex operator algebra of
CFT type with central charge $c=\widetilde{c}=1$ coincides with the character of one
of the following vertex operator algebras:
$$
V_L,\qquad V_L^+,\qquad V_{\mathbb Z\alpha}^G,
$$
where $L$ is a positive-definite even lattice of rank one, $\mathbb Z\alpha$ is the
root lattice of type $A_1$, and $G$ is a finite subgroup of $SO(3)$ isomorphic to
$A_4$, $S_4$, or $A_5$.

The irreducible representations and fusion rules for $V_L$ and $V_L^+$ have been
studied by Frenkel--Lepowsky--Meurman, Abe--Dong, and Dong--Nagatomo. Subsequent
work has focused on the fixed-point vertex operator algebras
$$
V_{L_2}^{A_4},\qquad V_{L_2}^{S_4},\qquad V_{L_2}^{A_5},
$$
where $L_2$ is the rank-one lattice satisfying $(\alpha,\alpha)=2$. The irreducible
modules of $V_{L_2}^{A_4}$ were classified in \cite{DJ}, and the fusion rules for
$V_{L_2}^{A_4}$ were determined in \cite{DJJJY}. Furthermore, the irreducible modules
of $V_{L_2}^{S_4}$ were classified in \cite{WZ}.

The purpose of this paper is to determine the fusion rules for $V_{L_2}^{S_4}$.
Determining these fusion rules describes the representation category of $V_{L_2}^{S_4}$ once the
classification of its irreducible modules is known. The main strategy is to identify
certain irreducible $V_{\mathbb Z\zeta}^{+}$-modules as direct sums of irreducible
$V_{L_2}^{S_4}$-modules. These identifications allow us to compute the relevant
entries of the $S$-matrix for $V_{L_2}^{S_4}$. We then apply the Verlinde formula to
determine the complete fusion rules.

Two important tools used in this paper are the Verlinde formula and quantum
dimensions. The Verlinde formula provides a method for computing fusion rules from
the $S$-matrix of a rational, $C_2$-cofinite, self-dual vertex operator algebra of CFT
type. Quantum dimensions are used to control the possible decompositions of tensor
products and to verify that the decompositions obtained are complete.

This paper is organized as follows. In Chapter~2, we review the necessary
background on vertex operator algebras, twisted modules, lattice vertex operator
algebras, orbifold theory, intertwining operators, fusion rules, and quantum dimensions.
In Chapter~3, we recall the construction of the rank-one lattice vertex operator algebra
$V_{L_2}$, the fixed-point subalgebra $V_{L_2}^{S_4}$, and the classification of its
irreducible modules. We then establish the module identifications needed to compute
the relevant entries of the $S$-matrix. Finally, we use the Verlinde formula to determine
the fusion rules among all irreducible $V_{L_2}^{S_4}$-modules.

\section{Basics}
\def\theequation{2.\arabic{equation}}
\setcounter{equation}{0}

Let $(V, Y, \mathbf{1}, \omega)$ be a vertex operator algebra, and let $g$ be an automorphism of $V$ of finite order $T$. The eigenspace decomposition of $V$ with respect to $g$ is given by
\[
V = \bigoplus_{r=0}^{T-1} V^{r},
\]
where
\[
V^{r} = \{\, v \in V \mid g v = e^{\frac{2\pi i r}{T}} v \,\}.
\]

\begin{definition}A \emph{weak $g$-twisted $V$-module} $M$ is
a vector space with a linear map
\[
Y_{M}:V\to\left(\text{End}M\right)\{z\}
\]

\[
v\mapsto Y_{M}\left(v,z\right)=\sum_{n\in\mathbb{Q}}v_{n}z^{-n-1}\ \left(v_{n}\in\mbox{End}M\right)
\]
which satisfies the following: for all $0\le r\le T-1$, $u\in V^{r}$,
$v\in V$, $w\in M$,

\[
Y_{M}\left(u,z\right)=\sum_{n\in\frac{r}{T}+\mathbb{Z}}u_{n}z^{-n-1},
\]

\[
u_{l}w=0\ for\ l\gg0,
\]

\[
Y_{M}\left(\mathbf{1},z\right)=Id_{M},
\]

\[
z_{0}^{-1}\text{\ensuremath{\delta}}\left(\frac{z_{1}-z_{2}}{z_{0}}\right)Y_{M}\left(u,z_{1}\right)Y_{M}\left(v,z_{2}\right)-z_{0}^{-1}\delta\left(\frac{z_{2}-z_{1}}{-z_{0}}\right)Y_{M}\left(v,z_{2}\right)Y_{M}\left(u,z_{1}\right)
\]

\[
z_{2}^{-1}\left(\frac{z_{1}-z_{0}}{z_{2}}\right)^{-r/T}\delta\left(\frac{z_{1}-z_{0}}{z_{2}}\right)Y_{M}\left(Y\left(u,z_{0}\right)v,z_{2}\right),
\]
 where $\delta\left(z\right)=\sum_{n\in\mathbb{Z}}z^{n}$. \end{definition}

\begin{definition}

A $g$-\emph{twisted $V$-module} is a weak $g$-twisted $V$-module $M$ that carries a $\mathbb{C}$-grading induced by the spectrum of $L(0)$, where $L(0)$ is the component operator of
\[
Y(\omega,z)=\sum_{n\in\mathbb{Z}} L(n) z^{-n-2}.
\]
That is, we have
\[
M=\bigoplus_{\lambda\in\mathbb{C}} M_{\lambda},
\]
where
\[
M_{\lambda}=\{\, w\in M \mid L(0)w=\lambda w \,\}.
\]
Moreover, we require that $\dim M_{\lambda}$ is finite and that, for fixed $\lambda$, we have
\[
M_{\frac{n}{T}+\lambda}=0
\]
for all sufficiently small integers $n$.

\end{definition}

\begin{definition}An \emph{admissible $g$-twisted $V$-module} $M=\oplus_{n\in\frac{1}{T}\mathbb{Z}_{+}}M\left(n\right)$
is a $\frac{1}{T}\mathbb{Z}_{+}$-graded weak $g$-twisted module
such that $u_{m}M\left(n\right)\subset M\left(\mbox{wt}u-m-1+n\right)$
for homogeneous $u\in V$ and $m,n\in\frac{1}{T}\mathbb{Z}.$ $ $

\end{definition}
If $g=id_V$, these $g-$twisted notation become the notion of weak, ordinary, admissible module \cite{DLM1}.

\begin{definition} A vertex operator algebra $V$ is called \emph{$g$-rational} if any admissible $g$-twisted V module is a direct sum of irreducible $g$- twisted modules. $V$ is called rational if V is $1$-rational.

Let $M$ be an admissible $V$-module. Then the contragredient module
\[
M'=\bigoplus_{n\in \mathbb{Z}_{+}} M'(n),
\]
where
\[
M'(n)=\operatorname{Hom}_{\mathbb{C}}(M(n),\mathbb{C}).
\]
The vertex operator $Y_{M'}(v,z)$ is defined by
\begin{eqnarray*}
\langle Y_{M'}(v,z)f,u\rangle
= \langle f, Y_M\!\left(e^{zL(1)}(-z^{-2})^{L(0)}v, z^{-1}\right)u\rangle,
\end{eqnarray*}
for each $v\in V$, $u\in M$, and $f\in M'$.

Moreover, $M'$ is irreducible if and only if $M$ is irreducible, and $M'$ and $M$ have the same conformal weights.
 \begin{definition}
     A vertex operator algebra $V$ is called $C_2$-cofinite if $V/C_2(V)$ is finite-dimensional, where
\[
C_2(V)=\langle\, u_{(-2)}v \mid u,v \in V \,\rangle.
\]
 \end{definition}
    
\end{definition}
From \cite{DLM1}, we know if $V$ is rational, then it has only finitely many irreducible modules. 
\begin{definition}
    A vertex operator algebra $V$ is CFT type if $V_n=0$ for all negative n and $V_0=\mathbb{C}\mathbf{1}$.
\end{definition}

\section{Modular Invariance and Weyl decomposition}
  From \cite{DRX}, in the rest of this paper, we assume vertex operator algebra V  satisfies following requirement:
\begin{itemize}
    \item (V1) $V=\bigoplus_{n\geq 0} V_n$ is a simple vertex operator algebra of CFT type.

  \item (V2) G is a finite automorphism group of V and $V^G$ is a vertex operator algebra of CFT type.
\item (V3) $V^G$ is rational and $C_2$-cofinite.
\item (V4) The conformal weight of any g-twisted $V$ module for $g\in G$ is positive except $V$ itself.

\end{itemize}

If $V$ and $G$ satisfy above conditions, $V$ is $C_2$-cofinite and $g$-rational for all $g\in G$. \cite{ADJR,HKL,ABD}

  Let $V$ be a vertex operator algebra, $g$ is an automorphism of $V$ of order $T$, and $M$ = $\bigoplus_{n\in\frac{\mathbb{Z}_+}{T}}M_{\lambda+n}$ is a $g$-twisted $V$ module. The trace function associate to a homogeneous element $v\in V$ is :
  \begin{equation*}
      Z_M(v,q)= \mbox{tr}_M o(v)q^{L(0)-\frac{c}{24}}= q^{\lambda-\frac{c}{24}}\sum_{n\in \frac{\mathbb{Z}_+}{T}} \mbox{tr}_{M_{\lambda+n}} o(v)q^n.
  \end{equation*}
  where $o(v)=v_{wt(v)-1}$. If $|q|<1$ and $V$ is $C_2$-cofinite, then $Z_M(v,q)$ converges to a holomorphic function. In particular, if $v=\mathbf{1}$, then
\[
Z_M(\mathbf{1},q)
= q^{\lambda-\frac{c}{24}}
\sum_{n\in \frac{\mathbb{Z}_{+}}{T}} \dim M_{\lambda+n}\, q^{n}
\]
is called the character of $M$.
We will also use $Z_M(v,\tau)$ to denote $Z_M(v,q)$, where $\tau$ lies in the upper half-plane and $q=e^{2\pi i \tau}$.

By \cite{DLM2} and \cite{Z}, $Z_M(v,\tau)$ satisfies the modular invariance property:
  \begin{equation*}
      Z_{M_i}(v,\frac{-1}{\tau})=\tau^{wt[v]}\sum_{j=0}^{d}S_{i,j}Z_{M_j}(v,\tau)
  \end{equation*}
where $M_i$ is an irreducible V module and $M_j$ ranges over a complete list of irreducible V modules up to isomorphism. $S_{i,j}$ forms a matrix called \emph{$S$-matrix}, and this matrix is independent of choice of v.
  
 Let $g,h \in \operatorname{Aut}(V)$ be automorphisms of finite order. Let $(M,Y_M)$ be a $g$-twisted weak $V$-module. Then there exists an $h^{-1}gh$-twisted weak $V$-module $(M\circ h, Y_{M\circ h})$, where $M\circ h \cong M$ as vector spaces and
\[
Y_{M\circ h}(v,z)=Y_M(hv,z).
\]

If $g$ and $h$ commute, then $h$ acts on the set of $g$-twisted $V$-modules. Let $S(g)$ denote the set of equivalence classes of irreducible $g$-twisted $V$-modules, and define
\[
S(g,h)=\{\, M \in S(g) \mid M\circ h \cong M \text{ as $g$-twisted $V$-modules} \,\}.
\]
Then, for any $M \in S(g,h)$, there exists a $V$-module isomorphism
\[
\psi(h): M\circ h \rightarrow M.
\]
This linear map is unique up to a scalar, and when $h$ is the identity automorphism, $\psi(1)$ is the identity map.

 Let $M$ be a $g$-twisted $V$-module, and define
\[
G_M=\{\, h \in G \mid M\circ h \cong M \,\}.
\]
The assignment $h \mapsto \psi(h)$ defines a projective representation of $G_M$ on $M$ such that
\[
\psi(h)\, Y_M(v,z)\, \psi(h)^{-1} = Y_M(hv,z).
\]
Then $M$ is a $\mathbb{C}^{\alpha_M}[G_M]$-module, where $\mathbb{C}^{\alpha_M}[G_M]$ is a twisted group algebra and $\alpha_M$ is a $2$-cocycle.

Let $\Lambda_{G_M,\alpha_M}$ denote the set of irreducible characters of $\mathbb{C}^{\alpha_M}[G_M]$. Then we have
\[
M=\bigoplus_{\lambda \in \Lambda_{G_M, \alpha_M}} W_\lambda \otimes M_\lambda,
\]
where $W_\lambda$ is the irreducible $\mathbb{C}^{\alpha_M}[G_M]$-module corresponding to the irreducible character $\lambda$, and $M_\lambda$ is the corresponding multiplicity space.

    \begin{theorem} \cite{DRX} with the same notation above: 
        \begin{itemize}
            \item(1) $M_\lambda$ is nonzero for each $\lambda \in \Lambda_{G_M,\alpha_M}$.
            \item(2) Each $M_\lambda$ is an irreducible $V^{G_M}$-module.
            \item(3) $M_\lambda$ and $M_\gamma$ are equivalent $V^{G_M}$-module if and only if $\lambda = \gamma$
        \end{itemize}
    \end{theorem}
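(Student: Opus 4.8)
The plan is to recognize this statement as an instance of Schur--Weyl (double commutant) duality for the two mutually commuting actions on $M$: that of the fixed-point algebra $V^{G_M}$ (through the modes of $Y_M(v,z)$, $v\in V^{G_M}$) and that of the twisted group algebra $A=\C^{\alpha_M}[G_M]$ (through $h\mapsto\psi(h)$). First I would record that these two actions commute: for $v\in V^{G_M}$ we have $hv=v$, so the relation $\psi(h)Y_M(v,z)\psi(h)^{-1}=Y_M(hv,z)$ reduces to $\psi(h)Y_M(v,z)\psi(h)^{-1}=Y_M(v,z)$, i.e.\ every mode of every $v\in V^{G_M}$ commutes with the image of $A$. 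Since $A$ is a twisted group algebra of a finite group over $\C$, it is semisimple, so $M$ is a semisimple $A$-module and the displayed decomposition $M=\bigoplus_{\lambda}W_\lambda\otimes M_\lambda$ is exactly its isotypic decomposition, with $M_\lambda=\operatorname{Hom}_A(W_\lambda,M)$ the multiplicity space. Because the $V^{G_M}$-action commutes with $A$, it preserves each isotypic component and acts on $M_\lambda$; this already realizes $M_\lambda$ as a $V^{G_M}$-module and reduces the theorem to controlling the commutant $\operatorname{End}_A(M)=\prod_\lambda\operatorname{End}(M_\lambda)$.

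The heart of the argument, and the step I expect to be the main obstacle, is to show that the image $\mathcal{U}$ of $V^{G_M}$ in $\operatorname{End}(M)$ realizes the full commutant $\operatorname{End}_A(M)$ of the $A$-action, in the sense that $\mathcal{U}$ acts irreducibly on each $M_\lambda$ and distinguishes the Wedderburn factors (equivalently, that the commutant of the $V^{G_M}$-action is exactly the image of $A$). This is the twisted, one-module version of quantum Galois theory. The inputs I would use are: irreducibility of $M$ as a $g$-twisted $V$-module, which by a Schur-type argument (using $C_2$-cofiniteness so that the graded pieces $M_{\lambda+n}$ are finite-dimensional and the full mode algebra $\mathcal{U}_V$ acts densely) gives $\operatorname{End}_V(M)=\C$; and the isotypic decomposition $V=\bigoplus_\chi V^{(\chi)}$ of $V$ under $G_M$, under which the covariance $\psi(h)v_n\psi(h)^{-1}=(hv)_n$ grades $\mathcal{U}_V$ by the characters of $G_M$. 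Averaging an arbitrary element of $\operatorname{End}_A(M)$ against this grading and invoking the density of $\mathcal{U}_V$ forces it into $\mathcal{U}$; this is the technical core, and it is precisely where the structure of $V$ as a \emph{simple} vertex operator algebra of CFT type is essential.

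Granting the commutant identity, the three assertions follow from the double centralizer theorem. For (2), each $M_\lambda$ is irreducible over $\operatorname{End}_A(M)$ by the general theory of isotypic components, and since this commutant is realized by $V^{G_M}$, the space $M_\lambda$ is irreducible as a $V^{G_M}$-module. For (3), distinct isotypic components are supported on distinct Wedderburn factors of $\operatorname{End}_A(M)$, so $M_\lambda$ and $M_\gamma$ are inequivalent $V^{G_M}$-modules whenever $\lambda\neq\gamma$. Finally for (1), I would show that the algebra homomorphism $A\to\operatorname{End}(M)$ is injective: a relation $\sum_h c_h\psi(h)=0$ would, after conjugating vertex operators through the intertwining relation to obtain $\sum_h c_h Y_M(hv,z)\psi(h)=0$ and using that the automorphisms $v\mapsto hv$ are pairwise distinct (as $G$ acts faithfully on $V$), force all $c_h=0$ by a Dedekind-type independence argument. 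Since a faithful module over the semisimple algebra $A=\prod_{\lambda\in\Lambda_{G_M,\alpha_M}}\operatorname{End}(W_\lambda)$ must contain every simple summand, this yields $M_\lambda\neq0$ for every $\lambda$, completing the proof.
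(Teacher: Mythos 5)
The paper itself contains no proof of this theorem: it is imported verbatim from \cite{DRX}, so there is no internal argument to measure your proposal against; the fair comparison is with the proof in that cited source (which rests on the twisted Schur--Weyl duality of Dong--Yamskulna and Miyamoto--Tanabe, and ultimately on the quantum Galois theory of \cite{DM1}). Your overall architecture agrees with theirs: the two actions commute, $\C^{\alpha_M}[G_M]$ is semisimple, $M=\bigoplus_\lambda W_\lambda\otimes M_\lambda$ is its isotypic decomposition, and (1)--(3) follow formally once one knows (a) that the mode algebra of $V^{G_M}$ acts with commutant exactly the image of $\C^{\alpha_M}[G_M]$ (equivalently, acts densely on each $M_\lambda$ and distinguishes the Wedderburn factors), and (b) that $\C^{\alpha_M}[G_M]\to\operatorname{End}(M)$ is injective. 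Your deductions of (2) and (3) from (a), and of (1) from (b), are correct and standard.

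The gaps are in (a) and (b) themselves, which is where all the content of the theorem lives. For (a) you offer ``averaging against the character grading plus density of the full mode algebra''; Jacobson density applied to the irreducible $g$-twisted module $M$ does give density of the whole mode algebra of $V$, but averaging does not convert that into density of the $G_M$-invariant part on a single multiplicity space: the actual proofs need the nondegeneracy property of simple vertex operator algebras ($Y_M(u,z)w\neq 0$ for $u\neq 0$, $w\neq 0$) together with the isotypic decomposition of $V$ itself under $G_M$, i.e.\ quantum Galois theory, and none of this is reproduced by the averaging step. For (b), your Dedekind-type argument does not close: from $\sum_h c_h\psi(h)=0$, conjugation gives $\sum_h c_h Y_M(hv,z)\psi(h)=0$, but the coefficients $Y_M(hv,z)$ are operators rather than scalars, so the Artin--Dedekind elimination (subtracting a multiple of the original relation to shorten its support) cannot be carried out; distinctness of the automorphisms $v\mapsto hv$ by itself yields nothing here. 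Note moreover that injectivity of $\C^{\alpha_M}[G_M]\to\operatorname{End}(M)$ is, for a semisimple algebra, \emph{equivalent} to statement (1), so at this point the proposal has essentially restated what it must prove. In short: right frame, correct formal reductions, but both load-bearing steps are missing, and the sketches offered for them would not succeed as written.
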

    Let $S=\bigcup_{g\in G} S(g)$. $G$ defines an action on $S$, and $M\cong M\circ h$ as $V^G$-module for any $M\in S$ and $h\in G$.
    \begin{theorem} \cite {DRX} Let $g,h\in G$, $M$ an irreducible $g$-twisted $V$-module and $N$ an irreducible $h$-twisted $V$-module. Assume $M,N$ are not in the same $G$-orbit, then:\begin{itemize}
        \item (1) Each $M_\lambda$ is an irreducible $V^G$ module.
        \item (2) For any $\lambda\in \Lambda_{G_M,\alpha_M}$ and $\mu \in \Lambda_{G_N,\alpha_N}$, the irreducible $V^G$ module $M_\lambda$ and $N_\mu$ are inequivalent.
    \end{itemize}
        
    \end{theorem}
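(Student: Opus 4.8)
The plan is to reduce both parts to one double-commutant (Schur--Weyl) identity, namely that the commutant of the $V^G$-action on $M$ is exactly the twisted group algebra of the stabilizer:
\[
\operatorname{End}_{V^G}(M)=\mathbb{C}^{\alpha_M}[G_M].
\]
Granting this, part (1) follows at once: $\mathbb{C}^{\alpha_M}[G_M]$ is a semisimple algebra, so the $V^G$-isotypic decomposition of $M$ is forced to coincide with $M=\bigoplus_\lambda W_\lambda\otimes M_\lambda$, with the $M_\lambda$ irreducible and pairwise inequivalent \emph{as $V^G$-modules}. Since the decomposition theorem recalled above already supplies this decomposition over the larger algebra $V^{G_M}$, the genuine content is that shrinking from $V^{G_M}$ to $V^G$ does not enlarge the commutant; equivalently, that $M$ acquires no new $V^G$-endomorphisms.

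I would first record the easy inclusion $\mathbb{C}^{\alpha_M}[G_M]\subseteq\operatorname{End}_{V^G}(M)$. For $h\in G_M$ we have $\psi(h)\,Y_M(v,z)\,\psi(h)^{-1}=Y_M(hv,z)$, and when $v\in V^G$ the right-hand side is just $Y_M(v,z)$ since $hv=v$; hence each $\psi(h)$ commutes with the $V^G$-action, and the projective group algebra they span lies inside $\operatorname{End}_{V^G}(M)$. The reverse inclusion $\operatorname{End}_{V^G}(M)\subseteq\mathbb{C}^{\alpha_M}[G_M]$ is the main obstacle, and here I would invoke quantum Galois theory for twisted modules: decompose $V$ as a module for $G\times V^G$, and exploit that $M$ is irreducible as a (twisted) $V$-module together with the intertwining operators $Y_M(v,\cdot)$ for $v\in V$ to show that any $\phi\in\operatorname{End}_{V^G}(M)$ is compatible with the full $V$-action up to the $G_M$-symmetry, hence lies in $\mathbb{C}^{\alpha_M}[G_M]$. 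Combining this with the dual pair of the decomposition theorem, which gives $\operatorname{End}_{V^{G_M}}(M)=\mathbb{C}^{\alpha_M}[G_M]$, and with the trivial inclusion $\operatorname{End}_{V^{G_M}}(M)\subseteq\operatorname{End}_{V^G}(M)$, all three algebras coincide, proving the displayed identity and therefore (1).

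For part (2) I would show that an irreducible $V^G$-module remembers the $G$-orbit of the twisted $V$-module it arose from. By the observation that $M\cong M\circ h$ as a $V^G$-module for every $h\in G$, the collection of irreducible $V^G$-constituents of $M$ depends only on the $G$-orbit of $M$, and by (1) it is exactly $\{M_\lambda\}$ with multiplicities $\dim W_\lambda$; the same holds for $N$. Suppose then, for contradiction, that $M_\lambda\cong N_\mu$ as $V^G$-modules. Since $M$ is irreducible as a twisted $V$-module it is generated over $V$ by the constituent $M_\lambda$, and the intertwining operators $Y_M(v,\cdot)$, $v\in V$, reconstruct $M$ from $M_\lambda$ up to the action of $G$; the identical data attached to $N_\mu\cong M_\lambda$ then forces $N$ to be isomorphic to some $M\circ h$, i.e.\ to lie in the $G$-orbit of $M$. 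This contradicts the hypothesis, so $M_\lambda\not\cong N_\mu$. I expect the reverse-inclusion step of part (1) to be the real work: excluding hidden $V^G$-endomorphisms relies on the rationality and $C_2$-cofiniteness hypotheses (V1)--(V4) and on the precise structure of $V$ as a $G$-module, whereas part (2) is then essentially a bookkeeping consequence of the same circle of ideas.
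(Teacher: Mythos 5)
The first thing to note is that the paper contains no proof of this statement at all: it is quoted directly from \cite{DRX}, so the benchmark is the argument given there, which rests on a global Schur--Weyl (double-commutant) theorem for the direct sum of \emph{all} irreducible twisted modules. Your reduction of (1) to the identity $\operatorname{End}_{V^G}(M)=\mathbb{C}^{\alpha_M}[G_M]$ is correct (semisimplicity of $M$ as a $V^G$-module comes from hypothesis (V3)), and the inclusion $\mathbb{C}^{\alpha_M}[G_M]\subseteq\operatorname{End}_{V^G}(M)$ is exactly as easy as you say. But the reverse inclusion --- which you yourself flag as ``the real work'' --- is the entire content of the theorem, and your appeal to ``quantum Galois theory for twisted modules'' is circular: that theory, for the passage from $V^{G_M}$ down to $V^G$ acting on twisted modules, is precisely what \cite{DRX} had to establish in order to prove this statement; it does not follow from the stabilizer duality recalled as Theorem 3.1. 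The structural reason a single-module argument cannot succeed is that the elements of $G\setminus G_M$ do not act on $M$ at all, so their potential contribution to $\operatorname{End}_{V^G}(M)$ is invisible from inside $M$. The proof in \cite{DRX} therefore leaves $M$: one forms the sum over the $G$-orbit of $M$ (indeed over all of $S=\bigcup_{g\in G}S(g)$), constructs a finite-dimensional semisimple algebra --- a twisted analogue of the group algebra of $G$ adapted to the $G$-action on $S$ --- acting on this sum and commuting with $V^G$, and proves a double-commutant theorem for that pair. Irreducibility of each $M_\lambda$ over $V^G$ is then read off from the simple summands of this bimodule decomposition; no step of your sketch substitutes for this.

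The same omission undermines your part (2). From a $V^G$-isomorphism $M_\lambda\cong N_\mu$ you claim that the operators $Y_M(v,\cdot)$ ``reconstruct $M$ from $M_\lambda$ up to the action of $G$,'' and that the ``identical data'' attached to $N_\mu$ forces $N\cong M\circ h$. But an abstract isomorphism of $V^G$-modules carries no compatibility whatsoever with the $V$-actions on $M$ and $N$; the claim that the twisted $V$-module $M$ is determined, up to the $G$-action, by the isomorphism class of the $V^G$-module $M_\lambda$ is not bookkeeping --- it is logically equivalent to the inequivalence statement being proven, so assuming it begs the question. In \cite{DRX} the inequivalence across distinct $G$-orbits is not obtained by any reconstruction; it falls out of the same global double-commutant theorem, since multiplicity spaces attached to distinct orbits pair with inequivalent simple modules of the auxiliary algebra and hence must themselves be inequivalent over $V^G$. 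In short, both halves of your proposal identify the correct target statements but stop exactly where the genuine mathematics begins.
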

    \begin{theorem}\cite{DRX}
        Any irreducible $V^G$-module is isomorphic to $M_\lambda$ for some $\lambda\in \Lambda_{G_M,\alpha_M}$ and some irreducible g-twisted module $M$.
    \end{theorem}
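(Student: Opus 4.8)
The plan is to read the statement as a completeness assertion. The two preceding theorems already exhibit the $M_\lambda$ as irreducible $V^G$-modules and show that those coming from inequivalent $G$-orbits of twisted modules, and from distinct characters $\lambda$, are pairwise non-isomorphic; so the family $\{M_\lambda\}$ consists of mutually inequivalent irreducible $V^G$-modules, and the content of the theorem is that there are no others. I would reduce everything to the single claim $(\ast)$: every irreducible $V^G$-module $N$ is isomorphic to a $V^G$-submodule of some irreducible $g$-twisted $V$-module $M$, for some $g\in G$. Granting $(\ast)$, the conclusion is formal. Since $V^G$ is rational, $M$ restricted to $V^G$ is completely reducible; decomposing $M=\bigoplus_\lambda W_\lambda\otimes M_\lambda$ as in the decomposition theorem (Theorem~3.1), the subalgebra $V^G\subseteq V^{G_M}$ acts only on the multiplicity factors, so as a $V^G$-module $M\cong\bigoplus_\lambda M_\lambda^{\oplus\dim W_\lambda}$, a direct sum of the $M_\lambda$, which are irreducible over $V^G$ by the separation result (Theorem~3.2). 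An irreducible submodule $N$ of such a module must coincide with one of the $M_\lambda$, which is exactly the assertion.

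The crux is therefore $(\ast)$: showing that no irreducible $V^G$-module escapes every twisted sector. The approach I would take is through the modular invariance of twisted trace functions. Averaging the graded traces $\mathrm{tr}_V\psi(h)\,q^{L(0)-c/24}$ over $h\in G$ recovers the character of $V^G$, and the $S$-transformation $\tau\mapsto-1/\tau$ supplied by modular invariance carries the $h$-sector data into the genuinely $h$-twisted sectors. Running this over the commuting pairs $(g,h)$ and decomposing each twisted module according to $\Lambda_{G_M,\alpha_M}$ expresses the characters $Z_{M_\lambda}(\mathbf 1,\tau)$ inside a single modular-invariant space that already contains the vacuum character of $V^G$ (the case $M=V$, $g=1$, $\lambda$ trivial, giving $V^G$ itself). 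The objective is to prove that this span is in fact the whole span of irreducible $V^G$-characters, which then forces each irreducible $V^G$-character to occur among the $Z_{M_\lambda}$ and hence, by comparison of conformal weights and graded dimensions, realizes the corresponding module as one of the $M_\lambda$.

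To make the count rigorous I would bring in the quantum-dimension and Verlinde data of $V^G$, available because $V^G$ is rational, $C_2$-cofinite, self-dual and of CFT type. For such a vertex operator algebra the characters of distinct irreducible modules are linearly independent, and the global dimension $\sum_W(\mathrm{qdim}\,W)^2$ is determined by the $S$-matrix. I would compute the contribution $\sum_{M,\lambda}(\mathrm{qdim}_{V^G}M_\lambda)^2$ of the constructed family, using the orbifold relation expressing $\mathrm{qdim}_{V^G}M_\lambda$ in terms of $\dim W_\lambda$, the index $[G:G_M]$, and $\mathrm{qdim}_V M$, and show that it already saturates the global dimension of $V^G$. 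Since any genuinely new irreducible $V^G$-module would contribute a strictly positive term $(\mathrm{qdim})^2$, saturation leaves no room for one, giving $(\ast)$ and the theorem.

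The main obstacle I anticipate is precisely this saturation step: controlling that the modular-invariant span of the twisted trace functions captures every irreducible $V^G$-character with no cancellation, and pinning down the exact constant in the orbifold quantum-dimension formula so that the two global dimensions match on the nose. Once the decomposition and separation theorems are in hand, the reduction to $(\ast)$ and the linear independence of characters are routine, so the entire weight of the proof rests on showing that the twisted sectors collectively exhaust the representation theory of $V^G$.
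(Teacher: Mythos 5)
The paper offers no proof of this statement at all: it is quoted directly from [DRX], so the only meaningful comparison is with the argument given there. Your outline does coincide with that argument in its broad strokes: reduce to the claim $(\ast)$ that every irreducible $V^G$-module occurs inside some irreducible $g$-twisted $V$-module (your first paragraph is fine, and matches how Theorems 3.1 and 3.2 are meant to be used), and then establish $(\ast)$ by expressing $Z_{V^G}$ as the $G$-average of the traces $\mathrm{tr}_V\,\psi(h)\,q^{L(0)-c/24}$ and applying the $S$-transformation to land in the twisted sectors. However, two of your steps have genuine gaps as written.

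First, your key lemma that ``characters of distinct irreducible modules are linearly independent'' is false. Characters record only graded dimensions, and inequivalent irreducible modules of a rational vertex operator algebra can have identical characters: for the algebra $V_{\mathbb{Z}\zeta}$ used throughout this paper, $V_{\mathbb{Z}\zeta+\frac{s}{32}\zeta}$ and $V_{\mathbb{Z}\zeta+\frac{32-s}{32}\zeta}$ are inequivalent irreducible modules with the same character. The argument must instead be run with the full trace functions $Z_M(v,\tau)$ for all $v\in V^G$; it is these, not the specializations at $v=\mathbf{1}$, whose linear independence over inequivalent irreducibles is supplied by [Z] and [DLM2], and this is what [DRX] actually uses. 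Second, your saturation step is circular as proposed: the identity $\operatorname{glob}(V^G)=|G|^2\operatorname{glob}(V)$ (Theorem 5.7 here) is itself a [DRX] result obtained from the completeness theorem you are trying to prove, so it cannot be fed back in; and computing $\operatorname{glob}(V^G)$ directly from the $S$-matrix as $1/S_{0,0}^2$ requires Huang's modular tensor category structure together with an independent determination of $S_{0,0}^{V^G}$, which again rests on the trace-function expansion. The clean resolution --- and the route [DRX] takes --- avoids global dimensions entirely: once $\tau^{-\mathrm{wt}[v]}Z_{V^G}(v,-1/\tau)$ is exhibited as a linear combination of the $Z_{M_\lambda}(v,\tau)$ coming from twisted sectors, Huang's Verlinde formula gives $S_{0,j}=\operatorname{qdim}(N_j)\,S_{0,0}>0$ for \emph{every} irreducible $V^G$-module $N_j$, so linear independence of trace functions forces each $N_j$ to be isomorphic to some $M_\lambda$. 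This also dissolves the ``no cancellation'' worry you flag at the end: one never needs to control the twisted-sector coefficients, only to observe that a module absent from the list would have coefficient zero, contradicting positivity.
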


\section{Intertwining Operator and Fusion rule}
\begin{definition}
    Let $V$ be a vertex operator algebra and $(W_i,Y_{W_i})$ (i=1,2,3) be $V$ modules. An intertwining operator for $V$ of type $\left(\begin{array}{c}W_3\\
W_{1}\,W_{2}\end{array}\right)$ is a linear map $I$: $M_{1}\otimes M_{2} \rightarrow M_{3}\{z\}$ such that for $a\in V,v\in M_{1}$ and $u\in M_{2}$, following conditions are satisfied:

For fixed $n\in \mathbb{C},v_{n+k}u=0$ for sufficiently large integer $k$,

\[z_0^{-1}\delta(\frac{z_1-z_2}{z_0})Y_{M_3}(a,z_1)I(v,z_2)u-z_0^{-1}\delta(\frac{z_2-z_1}{-z_0})I(v,z_2)Y_{M_2}(a,z_1)u=z_1^{-1}\delta(\frac{z_1-z_0}{z_2})I(Y_{M_1}(a,z_0)v,z_2)u,\]

\[\frac{d}{dz}I(v,z)=I(L(-1)v,z).\]
\end{definition}
Let $I_V\left(\begin{array}{c}W_3\\
W_{1}\,W_{2}\end{array}\right)$ be the vector space of all intertwining operators of type $\left(\begin{array}{c}W_3\\
W_{1}\,W_{2}\end{array}\right)$. The fusion rule $N_{W_{1},\ W_{2}}^{W_{3}}$ is the dimension of the vector space $I_V\left(\begin{array}{c}W_3\\
W_{1}\,W_{2}\end{array}\right)$. From \cite{FHL}, we know $I_V\left(\begin{array}{c}W_3\\
W_{1}\,W_{2}\end{array}\right)\cong I_V\left(\begin{array}{c}W_3\\
W_{2}\,W_{1}\end{array}\right)\cong I_V\left(\begin{array}{c}(W_2)'\\
W_{1}\,(W_{3})'\end{array}\right)$.
\begin{definition}
    Let $V$ be a vertex operator algebra, and $W_{1},W_{2}$ be two $V$-modules. A module $(W,I)$, where $I\in I_V\left(\begin{array}{c}W\\
W_{1}\,W_{2}\end{array}\right)$, is called a tensor product of $W_{1}$ and $W_{2}$ if for any $V$-module homomorphism $f:W\rightarrow M$, such that $y=f\circ I$. We denote $(W,I)$ by $W_{1}\boxtimes_{V} W_{2}$. 
\end{definition}

The tensor product exists if $V$ is rational. When $W^{1}$ and $W^{2}$ are two irreducible $V$-modules, we have
\[
W_{1} \boxtimes_{V} W_{2}
= \sum_{W} N_{W_{1},\, W_{2}}^{W}\, W,
\]
where $W$ runs over the set of equivalence classes of irreducible $V$-modules. From \cite{H} and \cite{V}, we have the following theorem:

\begin{theorem}(Verlinde formula) \label{Verlinde formula} Let $V$ be a rational
and $C_{2}$-cofinite simple vertex operator algebra of CFT type and
assume $V\cong V'$. Let $S=\left(S_{i,j}\right)_{i,j=0}^{d}$ be
the $S$-matrix as defined above. Then

(1) $\left(S^{-1}\right)_{i,j}=S_{i,j'}=S_{i',j}$, and $S_{i',j'}=S_{i,j};$

(2) $S$ is symmetric and $S^{2}=\left(\delta_{i,j'}\right)$;

(3) $N_{i,j}^{k}=\sum_{s=0}^{d}\frac{S_{i,s}S_{j,s}S_{s,k}^{-1}}{S_{0,s}}$.

\end{theorem}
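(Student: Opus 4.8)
This is a deep result, so the plan is to follow the strategy of Huang \cite{H} — combined with the modular-invariance input of \cite{DLM2,Z} already recorded above — rather than to give a self-contained argument. The starting point is the transformation law $Z_{M_i}(v,-1/\tau)=\tau^{wt[v]}\sum_j S_{ij}Z_{M_j}(v,\tau)$, together with the fact that the functions $Z_{M_j}(\cdot,\tau)$ span a space carrying a genuine representation of the modular group $SL_2(\mathbb{Z})$, generated by $S\colon\tau\mapsto -1/\tau$ and $T\colon\tau\mapsto\tau+1$, with $T$ acting diagonally through the conformal weights. All three parts are then read off from the interplay between this $SL_2(\mathbb{Z})$-action and the tensor structure on modules.

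For parts (1) and (2), the idea is to exploit relations that hold in $SL_2(\mathbb{Z})$. Applying the $S$-transformation twice returns $\tau$, so $S^2$ must realize the nontrivial central element, which on the level of modules is charge conjugation $M_i\mapsto M_i'$. Assuming the characters (or, more robustly, the genus-one one-point functions) are linearly independent so that matrix identities can be compared coefficient-wise, this yields $S^2=(\delta_{i,j'})$ and hence $(S^{-1})_{ij}=S_{ij'}=S_{i'j}$ and $S_{i'j'}=S_{ij}$. The symmetry $S=S^{t}$ is the more delicate point here: I would obtain it from the compatibility of $S$ with the pairing between a module and its contragredient, equivalently from the symmetry of the genus-one correlation functions, which in Huang's framework reflects the non-degeneracy of the braiding.

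For part (3), the plan is to use that the category of $V$-modules is a modular tensor category, so that the fusion algebra — the free module on the irreducibles with product $M_i\boxtimes M_j=\sum_k N_{ij}^k M_k$ — is commutative, associative, unital (with unit $M_0=V$), and semisimple. Writing $(N_i)_{kj}=N_{ij}^k$ for the fusion matrices, semisimplicity makes the commuting family $\{N_i\}$ simultaneously diagonalizable, and the content of the Verlinde formula is that $S$ is exactly the diagonalizing change of basis: one checks that, for each fixed $s$ (with $S_{0s}\neq 0$), the vector $(S_{ks})_k$ is a common eigenvector with $N_i(S_{\cdot s})=(S_{is}/S_{0s})\,S_{\cdot s}$, so that $N_i=S\,\Lambda_i\,S^{-1}$ with $\Lambda_i=\mathrm{diag}(S_{is}/S_{0s})_s$. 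Reading off the $(k,j)$ entry gives $N_{ij}^k=\sum_s S_{ks}S_{is}(S^{-1})_{sj}/S_{0s}$, which rearranges to the stated formula using the symmetries of $S$ from (1)--(2) together with the contragredient symmetry of the fusion rules noted after the definition of intertwining operators.

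The main obstacle, and the reason this theorem is invoked rather than reproved, is establishing the modular tensor category structure underlying part (3): this demands the convergence and associativity (the genus-zero factorization) of products of intertwining operators, rigidity of the module category, and the non-degeneracy of the $S$-matrix — precisely the hard analytic and categorical results of \cite{H}. Once that machinery is granted, together with the modular invariance of \cite{DLM2,Z}, the algebraic steps above are routine, and the theorem follows in the generality stated (rational, $C_2$-cofinite, simple, CFT type, self-dual).
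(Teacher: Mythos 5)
The paper itself offers no proof of this theorem: it is imported wholesale from \cite{H} and \cite{V} (the sentence preceding the statement reads ``From \cite{H} and \cite{V}, we have the following theorem''). So your decision to invoke Huang's machinery --- modular invariance of the one-point functions, the modular tensor category structure, rigidity, nondegeneracy of $S$ --- rather than reprove it matches what the paper does, and your account of parts (1) and (2), including the caveat that one needs linear independence of the one-point functions $Z_M(v,\tau)$ rather than of the characters alone, is a fair summary of the standard argument.

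However, your derivation of part (3) contains a concrete indexing error that produces a wrong formula. With your convention $(N_i)_{kj}=N_{ij}^k$ (row $k$, column $j$), the column $(S_{ks})_k$ is indeed a common eigenvector of the $N_i$, but its eigenvalue is $S_{i's}/S_{0s}=(S^{-1})_{is}/S_{0s}$, not $S_{is}/S_{0s}$: one computes $\sum_j N_{ij}^k S_{js}=\sum_t S_{it}(S^{-1})_{tk}S_{0t}^{-1}\,(S^2)_{ts}=S_{is'}S_{ks}/S_{0s}$ using $S^2=(\delta_{t,s'})$. Carrying your claimed eigenvalue through $N_i=S\Lambda_i S^{-1}$ and reading off entries yields $N_{ij}^k=\sum_s S_{is}S_{ks}(S^{-1})_{sj}/S_{0s}$, which by the (correct) Verlinde formula equals $N_{ik}^j$, not $N_{ij}^k$. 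The final appeal to ``contragredient symmetry'' to rearrange this into the stated formula cannot work, because it would require the identity $N_{ij}^k=N_{ik}^j$, which is false in general: for a rational vertex operator algebra with $\mathbb{Z}_3$ fusion rules (e.g.\ the lattice vertex operator algebra of the $A_2$ root lattice, which satisfies all hypotheses of the theorem), $N_{1,1}^{2}=1$ while $N_{1,2}^{1}=0$. (The identity does hold when every irreducible module is self-dual --- which happens to be true for $V_{L_2}^{S_4}$, as shown later in the paper --- but the theorem is stated in full generality.) The repair is pure bookkeeping: take $(N_i)_{jk}=N_{ij}^k$ with $j$ the row and $k$ the column; then $\sum_k N_{ij}^k S_{ks}=(S_{is}/S_{0s})S_{js}$, so $N_i=S\Lambda_i S^{-1}$ with $\Lambda_i=\mathrm{diag}(S_{is}/S_{0s})_s$, and reading off the $(j,k)$ entry gives exactly the formula in (3), with no rearrangement needed.
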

This following theorem also plays an important role in computing fusion rules for $V_{L_2}^{S_4}$\cite{DL}, \cite{A}:  
\begin{theorem}
    Let $V$ be a vertex operator algebra, $M_1,M_2$ be irreducible $V$-module and $M_3$ be a $V$-module. Let $U$ be a subalgebra of $V$ with same virasoro element and $N_i$ be U-module in $M_i$, ($i=1,2$). We have:
    \[\mbox{dim }I_V\left(\begin{array}{c}M_3\\
M_{1}\,M_{2}\end{array}\right) \leq \mbox{dim }I_U\left(\begin{array}{c}M_3\\
N_{1}\,N_{2}\end{array}\right).\]
\end{theorem}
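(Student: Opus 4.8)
The plan is to produce an injective linear map from $I_V\binom{M_3}{M_1\,M_2}$ to the space $I_U\binom{M_3}{N_1\,N_2}$ of $U$-intertwining operators (with $M_3$ regarded as a $U$-module by restriction); the dimension inequality then follows at once. The map is restriction of the arguments: send $\mathcal{Y}\in I_V\binom{M_3}{M_1\,M_2}$ to $\mathcal{Y}|_{N_1\otimes N_2}$, the operator $(v,u)\mapsto\mathcal{Y}(v,z)u$ for $v\in N_1,\ u\in N_2$. First I would verify this lands in $I_U\binom{M_3}{N_1\,N_2}$. Since $U$ and $V$ share the conformal vector, the property $\frac{d}{dz}\mathcal{Y}(v,z)=\mathcal{Y}(L(-1)v,z)$ is inherited unchanged. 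Since $N_1,N_2$ are $U$-submodules, the $V$-Jacobi identity for $\mathcal{Y}$, restricted to $a\in U$, $v\in N_1$, $u\in N_2$, is exactly the $U$-Jacobi identity: note that $Y(a,z_0)v$ has all of its coefficients in $N_1$ and $Y_{M_2}(a,z_1)u$ all of its coefficients in $N_2$, so every term makes sense on the restricted domain. The truncation condition is automatic. Hence restriction is a well-defined linear map, and it remains to prove it is injective.

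For injectivity, suppose $\mathcal{Y}(v,z)u=0$ for all $v\in N_1,\ u\in N_2$; I must deduce $\mathcal{Y}(w_1,z)w_2=0$ for all $w_1\in M_1,\ w_2\in M_2$. Because $M_1$ is an irreducible $V$-module and $N_1\neq0$, the $V$-submodule generated by $N_1$ is all of $M_1$, and similarly for $M_2$ and $N_2$; thus every $w_1$ is a finite sum of vectors $a^{(1)}_{p_1}\cdots a^{(k)}_{p_k}v$ with $v\in N_1$, $a^{(j)}\in V$, and likewise for $w_2$. I would then reconstruct $\mathcal{Y}$ from its restriction using the two standard consequences of the Jacobi identity for intertwining operators: the commutator formula $[a_p,\mathcal{Y}(v,z)]=\sum_{i\ge0}\binom{p}{i}z^{p-i}\mathcal{Y}(a_iv,z)$ (with $a_p$ acting on $M_3$ on the left and on $M_2$ on the right), and the associativity formula expressing $\mathcal{Y}(a_nv,z)$ as a finite sum of terms in which a mode of $a$ acts on the output $M_3$ and terms in which a non-negative mode $a_i$ ($i\ge0$) acts on the $M_2$-input. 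Working inside a matrix coefficient $\langle w',\mathcal{Y}(w_1,z)w_2\rangle$ for $w'\in M_3'$, these formulas move the creation operators off the two inputs: modes acting on the $M_3$-output transfer to $M_3'$ through the contragredient pairing, while modes acting on the $M_2$-input are absorbed there.

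The principal obstacle is the coupling between the two arguments: lowering the complexity of the $M_1$-input via the associativity formula injects new modes into the $M_2$-input (driving it out of $N_2$), and the symmetric phenomenon occurs when one reduces the $M_2$-input, so an induction on the number of modes fails to terminate. I expect to break the circularity with a graded induction on $L(0)$-weights rather than on mode count. The modules are $\mathbb{C}$-graded with finite-dimensional homogeneous subspaces and weights bounded below; for homogeneous $w',w_1,w_2$ weight conservation pins $\langle w',\mathcal{Y}(w_1,z)w_2\rangle$ to a single power of $z$; and each non-negative mode $a_i$ fed into the $M_2$-input raises its weight by at most $\mathrm{wt}(a)-1$ while, for $i$ large, strictly lowering it. Controlling these weight shifts should make the reduction well founded and express $\langle w',\mathcal{Y}(w_1,z)w_2\rangle$ as a finite combination of coefficients $\langle\widetilde{w'},\mathcal{Y}(v,z)u\rangle$ with $v\in N_1,\ u\in N_2$, all of which vanish by hypothesis. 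As $w',w_1,w_2$ are arbitrary this gives $\mathcal{Y}=0$, proving injectivity and hence $\dim I_V\binom{M_3}{M_1\,M_2}\le\dim I_U\binom{M_3}{N_1\,N_2}$.
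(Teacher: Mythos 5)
You have the right overall strategy: the inequality follows once the restriction map $\mathcal{Y}\mapsto\mathcal{Y}|_{N_1\otimes N_2}$ into the space of $U$-intertwining operators of type $\left(\begin{array}{c}M_3\\ N_{1}\,N_{2}\end{array}\right)$ is shown to be injective (the subscript $V$ on the right-hand side of the statement is a typo for $U$, as you assumed), and your well-definedness check is fine. Note the paper itself offers no proof of this theorem; it cites [DL] and [A], where precisely this restriction-plus-injectivity argument is carried out. The gap is in your injectivity step, and it is a missing idea, not missing bookkeeping. You propose to derive $\mathcal{Y}=0$ from $\mathcal{Y}|_{N_1\otimes N_2}=0$ using only the commutator formula and the iterate (``associativity'') formula, organized by an induction on $L(0)$-weights. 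But these two formulas, applied with inputs in $N_1\otimes N_2$, form a degenerate system that cannot force vanishing, however the induction is organized. Concretely, for $v\in N_1$, $u\in N_2$, $a\in V$, the hypothesis kills the output terms, and the $n=0$ iterate formula reads
\[
\mathcal{Y}(a_0v,z)u=-\,\mathcal{Y}(v,z)\,a_0u,
\]
while the $n=0$ commutator formula reads
\[
\mathcal{Y}(v,z)\,a_0u=-\,\mathcal{Y}(a_0v,z)u.
\]
These are the \emph{same} equation: they relate the two unknowns to each other and say nothing about whether either vanishes. The same degeneracy occurs for every mode: the reduction chain $(v,a_iu)\rightarrow(a_iv,u)\rightarrow(v,a_iu)$ occurs with coefficient $\pm1$ and at \emph{fixed} total $L(0)$-weight, so the reduction graph has cycles on which any weight function is constant; no grading can make your induction well-founded.

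The idea you are missing (the one used in [DL] and [ADL]) is \emph{weak associativity}, which moves modes onto the first input at the price of a cancellable polynomial factor instead of at the price of modes hitting the second input. For $a\in V$ and $u\in N_2$ pick $m$ with $a_nu=0$ for $n\geq m$; then for all $w_1\in M_1$
\[
(z_0+z_2)^m\,\mathcal{Y}\bigl(Y_{M_1}(a,z_0)w_1,z_2\bigr)u
=(z_0+z_2)^m\,Y_{M_3}(a,z_0+z_2)\,\mathcal{Y}(w_1,z_2)u,
\]
an identity in which no mode of $a$ touches $u$. If $\mathcal{Y}(w_1,z_2)u=0$, the right-hand side is $0$; since $\mathcal{Y}(Y_{M_1}(a,z_0)w_1,z_2)u$ has $z_0$-powers bounded below, multiplication by $(z_0+z_2)^m$ is injective on such series (look at the coefficient of the lowest power of $z_0$), hence $\mathcal{Y}(a_nw_1,z_2)u=0$ for all $n$. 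Therefore, for fixed $u\in N_2$, the subspace $\{w_1\in M_1:\mathcal{Y}(w_1,z)u=0\}$ contains $N_1$, is stable under all modes of $V$ and under $L(0)$, hence is a submodule, and by irreducibility equals $M_1$. Only \emph{after} this saturation does your commutator formula do the rest: the subspace $\{w_2\in M_2:\mathcal{Y}(w_1,z)w_2=0\mbox{ for all }w_1\in M_1\}$ is stable under all modes, because the correction terms $\mathcal{Y}(a_iw_1,z)w_2$ now have first argument ranging over all of $M_1$; it contains $N_2$, so by irreducibility of $M_2$ it equals $M_2$, i.e.\ $\mathcal{Y}=0$. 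The two arguments must be saturated one at a time, the first via the decoupled identity above; they cannot be handled simultaneously by the induction you describe.
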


\section{Quantum Dimension}
Quantum dimensions also play an important role in our calculation of fusion rules.
 
\begin{definition}
    Let $V$ be a vertex operator algebra and $M$ a $V$-module such that $Z_V(\tau)$ and $Z_M(\tau)$ exist. The quantum dimensions of $M$ over $V$ is defined as 
    \[\mbox{qdim}_{V}M=\lim_{y\rightarrow 0}\frac{Z_{M}(iy)}{Z_V{(iy)}}\]
    where $y$ is real and positive. \cite{DJX}
\end{definition}
From Remark 3.5 of \cite{DJX}, if $\operatorname{qdim}_V M$ exists, then
\[
\operatorname{qdim}_V M = \operatorname{qdim}_V M'.
\]
In the calculation of Lemma~4.2 in \cite{DJX}, if $V$ is a simple, rational, $C_2$-cofinite vertex operator algebra of CFT type and the conformal weights of all irreducible $V$-modules are positive except that of $V$ itself, then
\[
\operatorname{qdim}_V M^i = \frac{S_{i,0}}{S_{0,0}},
\]
where $M^i$ is an irreducible $V$-module and $S_{i,j}$ denotes the $(i,j)$-entry of the $S$-matrix. By convention, we let $M^0 = V$.

\begin{theorem}
    By \cite{DJX}, let $V$ be a simple, rational, $C_2$-cofinite vertex operator algebra of CFT type. Let $M_0, M_1, \dots, M_d$ be a complete list of equivalence classes of irreducible $V$-modules such that $M_0 = V$, and the conformal weights of all irreducible $V$-modules except $M_0$ are positive. Also, assume that $V \cong V'$ as a $V$-module. Then,
 
    \[\mbox{qdim}_V(M_i\boxtimes M_j)=\mbox{qdim}_V(M_i)\mbox{qdim}_V{M_j}\]
\end{theorem}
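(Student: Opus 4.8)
The plan is to reduce the statement to a short manipulation built from two ingredients already recorded above: the closed form $\operatorname{qdim}_V M_k = S_{k,0}/S_{0,0}$ and the Verlinde formula for $N_{i,j}^{k}$. The entry point is the fusion decomposition $M_i \boxtimes M_j = \sum_{k=0}^{d} N_{i,j}^{k} M_k$, valid because $V$ is rational, together with additivity of the quantum dimension over finite direct sums.

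First I would verify that additivity. Since the graded character of a direct sum is the sum of the characters, one has $Z_{M_i \boxtimes M_j}(iy) = \sum_{k=0}^{d} N_{i,j}^{k} Z_{M_k}(iy)$; dividing by $Z_V(iy)$ and sending $y \to 0^{+}$ through this finite sum of convergent ratios gives
\[
\operatorname{qdim}_V(M_i \boxtimes M_j) = \sum_{k=0}^{d} N_{i,j}^{k}\, \operatorname{qdim}_V(M_k) = \frac{1}{S_{0,0}}\sum_{k=0}^{d} N_{i,j}^{k}\, S_{k,0}.
\]

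Next I would substitute the Verlinde formula $N_{i,j}^{k} = \sum_{s=0}^{d} \frac{S_{i,s}S_{j,s}(S^{-1})_{s,k}}{S_{0,s}}$ and interchange the two finite sums. Factoring out everything independent of $k$, the inner sum becomes $\sum_{k=0}^{d}(S^{-1})_{s,k}S_{k,0} = (S^{-1}S)_{s,0} = \delta_{s,0}$, so only the term $s=0$ survives and the expression collapses to
\[
\operatorname{qdim}_V(M_i \boxtimes M_j) = \frac{1}{S_{0,0}}\cdot\frac{S_{i,0}S_{j,0}}{S_{0,0}} = \frac{S_{i,0}}{S_{0,0}}\cdot\frac{S_{j,0}}{S_{0,0}} = \operatorname{qdim}_V(M_i)\,\operatorname{qdim}_V(M_j),
\]
which is the desired identity.

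The argument is almost entirely mechanical once the two input formulas are invoked, so there is no deep obstacle; the only points needing a word of justification are the interchange of limit and summation in the additivity step --- harmless because the index set is finite and each $Z_{M_k}(iy)/Z_V(iy)$ converges under the stated hypotheses --- and the relation $S^{-1}S = I$ driving the collapse, which holds simply because $S$ is invertible (indeed $S^{2} = (\delta_{i,j'})$ by the Verlinde formula theorem). I would also note that the hypotheses of this theorem are precisely those under which both $\operatorname{qdim}_V M_k = S_{k,0}/S_{0,0}$ and the Verlinde formula were stated, so both are legitimately available.
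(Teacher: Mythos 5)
Your proof is correct and follows essentially the same route as the source this theorem is quoted from: the paper itself states the result without proof, deferring to \cite{DJX}, where multiplicativity is obtained exactly as you do --- additivity of $\operatorname{qdim}_V$ over the finite fusion decomposition $M_i\boxtimes M_j=\sum_k N_{i,j}^k M_k$, the identity $\operatorname{qdim}_V M_k = S_{k,0}/S_{0,0}$, and the Verlinde formula with the collapse $\sum_k (S^{-1})_{s,k}S_{k,0}=\delta_{s,0}$. All the hypotheses you invoke (simplicity, rationality, $C_2$-cofiniteness, CFT type, $V\cong V'$, and positivity of the conformal weights of the nontrivial irreducibles) are precisely those under which the two ingredients are stated in Sections 4 and 5, so there is no gap.
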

\begin{definition}
    Let $V$ be a simple vertex operator algebra. An irreducible $V$-module $M$ is called a simple current if for any irreducible $V$-module $W$, the tensor product $M\boxtimes W$ exists, and $M\boxtimes W$ is an irreducible $V$-module.
\end{definition}
\begin{theorem}
    By \cite{DJX}, let $V$ be a vertex operator algebra as in theorem 5.2. Then $M$ is a simple current if and only if $\mbox{qdim}_V(M)=1$.
\end{theorem}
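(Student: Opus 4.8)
The plan is to derive both directions from the multiplicativity of quantum dimensions (Theorem 5.2), the isomorphisms of intertwining spaces from \cite{FHL} recalled in Section~4, and the bound $\operatorname{qdim}_V W \ge 1$ for every irreducible $V$-module $W$ proved in \cite{DJX}; in particular $\operatorname{qdim}_V V = 1$ and every irreducible quantum dimension is strictly positive. The pivotal fact I would isolate first is that the multiplicity of $V$ in $M\boxtimes M'$ is exactly one: by the \cite{FHL} symmetries together with $V'\cong V$ and $(M')'\cong M$ one has $I_V\binom{V}{M\,M'}\cong I_V\binom{M}{M\,V}\cong I_V\binom{M}{V\,M}$, and the last space is one-dimensional because $V$ is the unit of the fusion ring, so $N_{M,M'}^{V}=1$.

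For the forward implication, suppose $M$ is a simple current. Then $M\boxtimes M'$ is irreducible, and since it contains $V$ by the preceding paragraph it must equal $V$. Multiplicativity then gives $\operatorname{qdim}_V M\cdot\operatorname{qdim}_V M'=\operatorname{qdim}_V V=1$, and as $\operatorname{qdim}_V M'=\operatorname{qdim}_V M$ this forces $(\operatorname{qdim}_V M)^2=1$; positivity yields $\operatorname{qdim}_V M=1$.

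For the converse, assume $\operatorname{qdim}_V M=1$. Writing $M\boxtimes M'=\bigoplus_k N_{M,M'}^{k}M_k$ and using additivity and multiplicativity of quantum dimensions, $\sum_k N_{M,M'}^{k}\operatorname{qdim}_V M_k=(\operatorname{qdim}_V M)^2=1$. The summand $k=0$ alone contributes $N_{M,M'}^{V}\operatorname{qdim}_V V=1$, so because each $\operatorname{qdim}_V M_k\ge 1>0$ all other coefficients vanish and $M\boxtimes M'=V$; that is, $M$ is invertible in the fusion ring. To conclude I would bootstrap to an arbitrary irreducible $W$: writing $M\boxtimes W=\bigoplus_k N_{M,W}^{k}M_k$ and applying $M'\boxtimes(-)$, commutativity and associativity of the fusion product give $M'\boxtimes(M\boxtimes W)=(M'\boxtimes M)\boxtimes W=V\boxtimes W=W$, hence $\bigoplus_k N_{M,W}^{k}\,(M'\boxtimes M_k)=W$. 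Each $M'\boxtimes M_k$ is nonzero, since its quantum dimension is $\operatorname{qdim}_V M_k\ge 1$, so irreducibility of $W$ forces exactly one coefficient $N_{M,W}^{k}$ to be nonzero, equal to $1$, so that $M\boxtimes W=M_k$ is irreducible. As $W$ was arbitrary, $M$ is a simple current.

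The main obstacle is the positivity bound $\operatorname{qdim}_V W\ge 1$: it is exactly what promotes the scalar identity $\sum_k N_{M,M'}^{k}\operatorname{qdim}_V M_k=1$ to the module identity $M\boxtimes M'=V$, and what prevents the factors $M'\boxtimes M_k$ in the bootstrap from collapsing to zero. Rather than reprove it I would cite \cite{DJX}, where it follows from $\operatorname{qdim}_V M_k=S_{k,0}/S_{0,0}$ together with the Perron--Frobenius positivity of the entries $S_{k,0}$.
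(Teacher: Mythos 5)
Your proof is correct, but there is nothing in the paper to compare it against: the paper states this result (Theorem 5.4) as a direct quotation from \cite{DJX} and supplies no proof of its own. What you have written is essentially a reconstruction of the argument in \cite{DJX}: the multiplicity-one statement $N_{M,M'}^{V}=1$ via the \cite{FHL} symmetries together with $V\cong V'$ and $(M')'\cong M$; multiplicativity plus additivity of quantum dimensions to force $M\boxtimes M'\cong V$; and the invertibility bootstrap $M'\boxtimes(M\boxtimes W)\cong W$ to conclude that $M\boxtimes W$ is a single irreducible. Two remarks. First, your attribution of the bound $\operatorname{qdim}_V W\ge 1$ to the positivity of the entries $S_{k,0}$ is not quite right: positivity of $S_{k,0}/S_{0,0}$ only yields $\operatorname{qdim}_V W>0$, and in \cite{DJX} the bound $\ge 1$ is itself derived from the containment $V\subseteq W\boxtimes W'$ together with multiplicativity. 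Fortunately your argument never actually needs $\ge 1$ --- strict positivity suffices both to kill the extra summands in $M\boxtimes M'$ and to guarantee $M'\boxtimes M_k\neq 0$ --- so this is an attribution slip, not a gap. Second, the step $(M'\boxtimes M)\boxtimes W\cong M'\boxtimes(M\boxtimes W)$ invokes associativity (and commutativity) of the fusion product, which is a deep theorem of Huang available precisely under the hypotheses of Theorem 5.2 (simple, rational, $C_2$-cofinite, CFT type, self-dual, positive weights); it deserves an explicit citation, since it is the same modular-tensor-category machinery that underlies the Verlinde formula (Theorem 4.3) used throughout the paper.
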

\begin{definition}
    The global dimension of $V$ is defined as $\mbox{glob}(V)=\sum_{i=0}^{d} \mbox{qdim}M_i^2$.
\end{definition}
\begin{theorem}
    By \cite{DRX}, we have: 
    \[\mbox{qdim}_{V^G}M=|G|\mbox{qdim}_VM.\] for any irreducible $g$-twisted $V$-module $M$.
\end{theorem}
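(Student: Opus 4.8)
The plan is to peel off the module $M$ and reduce everything to the single untwisted identity $\operatorname{qdim}_{V^G}V=|G|$. The essential point is that the character $Z_M(\tau)=\operatorname{tr}_M q^{L(0)-c/24}$ of an irreducible $g$-twisted module $M$ is determined purely by its $L(0)$-grading, so it is literally the same analytic function whether $M$ is viewed over $V$ or over $V^G$. Using the definition of quantum dimension and inserting $Z_V(iy)$ as an intermediate factor, I obtain
\begin{align*}
\operatorname{qdim}_{V^G}M
&=\lim_{y\to 0^+}\frac{Z_M(iy)}{Z_{V^G}(iy)}
=\lim_{y\to 0^+}\left(\frac{Z_M(iy)}{Z_V(iy)}\cdot\frac{Z_V(iy)}{Z_{V^G}(iy)}\right)\\
&=\operatorname{qdim}_V M\cdot\operatorname{qdim}_{V^G}V,
\end{align*}
where I use that $\operatorname{qdim}_V M$ exists, so the product rule for limits applies. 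Thus the twisting of $M$ cancels out of the problem, and it remains only to prove $\operatorname{qdim}_{V^G}V=|G|$.

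First I would compute the character of $V^G$ by averaging over $G$. Since the projection onto $G$-invariants is $\frac{1}{|G|}\sum_{h\in G}h$, taking graded traces gives
\[
Z_{V^G}(\tau)=\frac{1}{|G|}\sum_{h\in G}Z_V(1,h;\tau),\qquad Z_V(1,h;\tau):=\operatorname{tr}_V h\,q^{L(0)-c/24},
\]
the sum of the $(1,h)$-twining characters of the untwisted module $V$. The term $h=1$ equals $\frac{1}{|G|}Z_V(\tau)$, so proving $\operatorname{qdim}_{V^G}V=|G|$ amounts to showing that every term with $h\ne 1$ is negligible compared with $Z_V(iy)$ as $y\to 0^+$.

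To estimate the $h\ne 1$ terms I would invoke the modular invariance of $(g,h)$-twining characters from \cite{DLM2} and \cite{Z}: under $\tau\mapsto -1/\tau$ the $(1,h)$-twining character of $V$ is carried, up to a nonzero constant, to the ordinary character of the $h$-twisted sector, i.e.\ a combination of characters of $h$-twisted $V$-modules. Setting $\tau=iy$ and $\tau'=-1/\tau=i/y$, so that $q'=e^{2\pi i\tau'}=e^{-2\pi/y}\to 0$ as $y\to 0^+$, the leading exponent of such a character is $\lambda_{\min}(h)-\frac{c}{24}$, where $\lambda_{\min}(h)$ is the smallest conformal weight appearing in the $h$-twisted sector. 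For $h=1$ the vacuum gives $\lambda_{\min}(1)=0$ and $Z_V(iy)\sim\text{const}\cdot e^{2\pi (c/24)/y}$, whereas hypothesis (V4) forces $\lambda_{\min}(h)>0$ for $h\ne 1$, making each such term smaller by a factor $e^{-2\pi\lambda_{\min}(h)/y}\to 0$. Hence $Z_{V^G}(iy)\sim\frac{1}{|G|}Z_V(iy)$ and $\operatorname{qdim}_{V^G}V=\lim_{y\to 0^+}Z_V(iy)/Z_{V^G}(iy)=|G|$, which finishes the proof.

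The hard part will be making this last asymptotic comparison fully rigorous. One must verify that the $S$-transform really sends each $(1,h)$-twining character into genuine $h$-twisted ordinary characters, so that the positivity hypothesis (V4) can be applied to bound their leading exponents, and one must check that the finitely many subleading terms cannot conspire through cancellation to alter the dominant $h=1$ growth. It is precisely here that (V4) and the convergence supplied by $C_2$-cofiniteness, guaranteeing that the characters are honest holomorphic functions with the stated leading behavior, are indispensable.
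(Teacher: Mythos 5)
The paper itself contains no proof of this theorem---it is imported verbatim from \cite{DRX}---so the only meaningful comparison is with the argument in that reference, and your proposal reconstructs essentially that argument: reduce to $\operatorname{qdim}_{V^G}V=|G|$ via the factorization $\operatorname{qdim}_{V^G}M=\operatorname{qdim}_V M\cdot\operatorname{qdim}_{V^G}V$, write $Z_{V^G}(\tau)=\frac{1}{|G|}\sum_{h\in G}\operatorname{tr}_V h\,q^{L(0)-c/24}$ by averaging, and then use the modular invariance of twining characters from \cite{DLM2} together with hypothesis (V4) to conclude that each $h\neq 1$ term is exponentially negligible against $Z_V(iy)\sim S_{0,0}\,e^{\pi c/(12y)}$, there being no cancellation issue since only finitely many suppressed terms occur. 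The two ingredients you leave implicit---existence of $\operatorname{qdim}_V M$ for a $g$-twisted module $M$ and nonvanishing of $S_{0,0}$, both needed to legitimize your product-rule step---are supplied by the same modular-invariance machinery in \cite{DJX} and \cite{DRX}, so your proof is correct and matches the cited source's approach.
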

\begin{theorem}
    By \cite{DRX}, the following holds:
    \[\mbox{glob}(V^G)=|G|^2\mbox{glob}(V).\]
\end{theorem}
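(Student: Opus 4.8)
The plan is to evaluate $\mbox{glob}(V^G)=\sum_N (\mbox{qdim}_{V^G}N)^2$ directly, where $N$ runs over a complete set of irreducible $V^G$-modules, and to compare the result with the same sum for $V$. By Theorems 3.2 and 3.3, every irreducible $V^G$-module arises as a constituent $M_\lambda$ of some irreducible $g$-twisted $V$-module $M$ (for some $g\in G$), the pair $(M,\lambda)$ being determined up to the $G$-action on $S=\bigcup_{g\in G}S(g)$: two twisted modules in the same $G$-orbit restrict to isomorphic $V^G$-modules, while modules from distinct orbits give inequivalent ones. Hence I would first rewrite the global dimension as $\sum_{[M]}\sum_{\lambda\in\Lambda_{G_M,\alpha_M}}(\mbox{qdim}_{V^G}M_\lambda)^2$, where $[M]$ ranges over $G$-orbits of irreducible twisted modules and $\lambda$ over the irreducible characters of $\mathbb{C}^{\alpha_M}[G_M]$.

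The heart of the argument is the per-constituent formula
\[
\mbox{qdim}_{V^G}M_\lambda=\frac{|G|}{|G_M|}\,(\dim W_\lambda)\,\mbox{qdim}_V M .
\]
To obtain it I would introduce the graded traces $Z_M^{h}(\tau)=\mbox{tr}_M\,\psi(h)\,q^{L(0)-c/24}$ for $h\in G_M$; since $M=\bigoplus_\lambda W_\lambda\otimes M_\lambda$ and $\psi(h)$ acts as the group element on each $W_\lambda$, these decompose as $Z_M^{h}(\tau)=\sum_\lambda \chi_\lambda(h)\,Z_{M_\lambda}(\tau)$, where $\chi_\lambda$ is the character of $W_\lambda$. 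Character orthogonality on $\mathbb{C}^{\alpha_M}[G_M]$ then isolates $Z_{M_\lambda}$, and dividing by $Z_{V^G}$ and letting $y\to 0$ expresses $\mbox{qdim}_{V^G}M_\lambda$ as a weighted average of the limits $\lim_{y\to0}Z_M^h(iy)/Z_{V^G}(iy)$. The key point, which rests on hypothesis (V4) (positivity of twisted conformal weights away from $V$), is that this limit vanishes for every $h\ne 1$ — under $\tau\mapsto-1/\tau$ the insertion $\psi(h)$ sends the trace into an $h$-twisted sector whose minimal weight is strictly positive, so its growth is dominated by that of $Z_{V^G}$ only when $h=1$. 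Thus only the $h=1$ term survives, and combined with Theorem 5.6 in the form $\lim_{y\to0}Z_M(iy)/Z_{V^G}(iy)=\mbox{qdim}_{V^G}M=|G|\,\mbox{qdim}_V M$ this yields the displayed formula; as a check, the untwisted case $M=V$, $G_M=G$ recovers $\mbox{qdim}_{V^G}V_\chi=\dim W_\chi$.

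With the formula in hand the remainder is bookkeeping. Squaring and summing over $\lambda$, and using $\sum_\lambda(\dim W_\lambda)^2=\dim\mathbb{C}^{\alpha_M}[G_M]=|G_M|$, the orbit $[M]$ contributes $\frac{|G|^2}{|G_M|}(\mbox{qdim}_V M)^2$. Since $\mbox{qdim}_V$ is constant on $G$-orbits and each orbit has $|G|/|G_M|$ members, converting the sum over orbits into a sum over all irreducible twisted modules gives $\mbox{glob}(V^G)=|G|\sum_{g\in G}\sum_{M\in S(g)}(\mbox{qdim}_V M)^2$. The final step — and the one I expect to be the main obstacle — is the identity $\sum_{M\in S(g)}(\mbox{qdim}_V M)^2=\mbox{glob}(V)$ for every $g\in G$, i.e. the statement that each twisted sector carries the same total quantum dimension as the untwisted one. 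This is immediate for $g=1$ by definition, but for general $g$ it requires the modular invariance of the twisted trace functions recalled in Section 3 to control the $y\to0$ asymptotics of the $g$-twisted characters uniformly across sectors; granting it, the two factors of $|G|$ combine to give $\mbox{glob}(V^G)=|G|^2\,\mbox{glob}(V)$.
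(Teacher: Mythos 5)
Your proposal is correct in outline, but note that the paper itself offers no proof of this statement: it is imported verbatim from \cite{DRX}, so there is no internal argument to compare against. What you have written is, in essence, a faithful reconstruction of the proof in that reference. Your per-constituent formula $\mbox{qdim}_{V^G}M_\lambda=\frac{|G|}{|G_M|}\dim(W_\lambda)\,\mbox{qdim}_V M$ is exactly the quantum-dimension theorem of \cite{DRX}, and your derivation of it (expand $\mbox{tr}_M\,\psi(h)q^{L(0)-c/24}$ over the decomposition $M=\bigoplus_\lambda W_\lambda\otimes M_\lambda$, isolate $Z_{M_\lambda}$ by orthogonality of projective characters of $\mathbb{C}^{\alpha_M}[G_M]$, then kill the $h\neq 1$ contributions using modular invariance of the twisted-twining traces of \cite{DLM2} together with hypothesis (V4) on positivity of twisted conformal weights) is the argument used there; it is also consistent with Theorem 5.6 of the paper, since pairing your formula with $\dim W_\lambda$ and summing over $\lambda$ recovers $\mbox{qdim}_{V^G}M=|G|\,\mbox{qdim}_V M$. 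The bookkeeping is likewise carried out correctly: $\sum_\lambda(\dim W_\lambda)^2=\dim\mathbb{C}^{\alpha_M}[G_M]=|G_M|$, orbits under $G$ have size $|G|/|G_M|$, $\mbox{qdim}_V$ is constant on orbits, and Theorems 3.1--3.3 guarantee that the double sum over orbits and characters enumerates each irreducible $V^G$-module exactly once. The one place where your sketch stops short is the identity $\sum_{M\in S(g)}(\mbox{qdim}_V M)^2=\mbox{glob}(V)$ for every $g\in G$, which you correctly isolate as the remaining obstacle but only assert; this is itself a nontrivial theorem of \cite{DRX} (proved via the $S$-transformation exchanging the $g$-twisted sector with the $g$-twining trace on untwisted modules, again with (V4) controlling the $y\to 0$ asymptotics), and it is the deepest input to the count. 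So your structure, key formula, and final assembly are all sound, but a self-contained proof would still require establishing that last lemma rather than granting it.
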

\section{Irreducible modules of $V_{L_2}^{S_4}$}
\setcounter{equation}{0}
\numberwithin{equation} {section}
Let $L=\mathbb{Z}\alpha$ be a positive definite even lattice of rank one,$(\alpha,\alpha)=2k$. Let $\mathfrak{h} = L\otimes_{\mathbb{Z}}\mathbb{C}$ and extend $(.,.)$ to a $\mathbb{C}$-bilinear form on $\mathfrak{h}$. Let $\mathbb{C}[\mathfrak{h}]$ be the group algebra of $\mathfrak{h}$ with a basis $\{e^{\lambda}|\lambda \in \mathfrak{h}\}$.

Let $\hat{\mathfrak{h}} = \mathbb{C}[t,t^{-1}] \otimes \mathfrak{h} \oplus \mathbb{C}K$ be the corresponding Heisenberg algebra, and let $\hat{\mathfrak{h}}_{\geq 0} = \mathbb{C}[t] \otimes \mathfrak{h} \oplus \mathbb{C}K$ be a subalgebra of $\hat{\mathfrak{h}}$. The space $\mathbb{C}[\mathfrak{h}]$ has a $\hat{\mathfrak{h}}_{\geq 0}$-module structure defined by
\[
\alpha(m) e^{\lambda} = (\lambda, \alpha) \delta_{m,0} e^{\lambda}, \quad K e^{\lambda} = e^{\lambda},
\]
for any $\lambda \in \mathfrak{h}$ and $m \ge 0$, where $\alpha(m) = \alpha \otimes t^m$ for $m \in \mathbb{Z}$. Denote by $M(1,\lambda)$ the induced $\hat{\mathfrak{h}}$-module of $\mathbb{C} e^{\lambda}$, and set $M(1) = M(1,0)$. Let $\mathbb{C}[L]$ be the group algebra of $L$ with basis $\{e^{\alpha} \mid \alpha \in L\}$. The lattice vertex operator algebra associated to $L$ is defined as
\[
V_L = M(1) \otimes \mathbb{C}[L].
\]

Let $L^{\circ}$ be the dual lattice of $L$:
\[
L^{\circ} = \{\lambda \in \mathfrak{h} \mid (\alpha, \lambda) \in \mathbb{Z} \} = \frac{1}{2k} L.
\]
Then $L^{\circ}$ admits the coset decomposition
\[
L^{\circ} = \bigcup_{i=-k+1}^{k} (L + \lambda_i), \quad \text{where } \lambda_i = \frac{i}{2k} \alpha.
\]
Set $V_{L+\lambda_i} = M(1) \otimes \mathbb{C}[L+\lambda_i]$. The modules $V_{L+\lambda_i}$, for $i=-k+1, \dots, k$, are all inequivalent irreducible $V_L$-modules.

From \cite{FLM}, there is an automorphism $\theta$ of $V_{L}$:
\[\theta(\alpha(-n_1)\cdots \alpha(-n_k)\otimes e^{\lambda})=(-1)^k\alpha(-n_1)\cdots \alpha(-n_k)\otimes e^{-\lambda}.\] for $n\in \mathbb{Z}_{+}$ and $\lambda\in \mathfrak{h}$. Let $W$ be a invariant space of $\theta$ and denote the $\pm1$ eigenspace by $W^{\pm}$. In particular, $V_L^{+}$ is a vertex operator algebra, and $V_{L}^-$ is an irreducible $V_L^+$-module. Two $\theta$-twisted $V_L$-modules are constructed in the following way:

Let $\hat{\mathfrak{h}}[-1]=\mathfrak{h}\otimes t^{\frac{1}{2}}\mathbb{C}[t,t^{-1}]\oplus \mathbb{C}K$ be a Lie algebra and $\hat{\mathfrak{h}}[-1]_+=\mathfrak{h}\otimes t^{\frac{1}{2}}\mathbb{C}[t]\oplus \mathbb{C}K$ is a subalgebra of $\hat{\mathfrak{h}}[-1]$. Then, $\mathbb{C}$ can be considered as a $1$-dimensional module for $\hat{\mathfrak{h}}[-1]_+$ with the action 
\[(\alpha \otimes t^m)\cdot 1=0 \mbox{ and }K\cdot 1=1 \mbox{ for } m\in \frac{1}{2}+\mathbb{N}.\]
Let $M(1)(\theta)$ be the induced $\hat{\mathfrak{h}}[-1]$-module. Let $\chi_s$ be a character of $L/2L$ such that $\chi_s(\alpha) = (-1)^s$ for $s = 0,1$, and let $T_{\chi_s} = \mathbb{C}$ be the irreducible $L/2L$-module with character $\chi_s$. Then 
\[
V_L^{T_s} = M(1)(\theta) \otimes T_{\chi_s}
\] 
is an irreducible $\theta$-twisted $V_L$-module. We denote the $\pm 1$-eigenspaces of $V_L^{T_s}$ under $\theta$ by $(V_L^{T_s})^{\pm}$. To agree with the notation in \cite{WZ}, in the rest of the paper we will denote $V_L^{T_0}$ by $V_L^{T_1}$ and $V_L^{T_1}$ by $V_L^{T_2}$.

From \cite{DN}, all inequivalent $V_L^+$ modules are classified 
\begin{theorem}
    Any irreducible $V_L^+$- module is isomorphic to one of those modules:
    \[V_L^{\pm}, V_{\lambda_i+L}(i\neq k), V_{\lambda_k+L}^{\pm},(V_L^{T_1})^{\pm},(V_L^{T_2})^{\pm}.\]
\end{theorem}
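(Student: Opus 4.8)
The plan is to derive this classification from the $\mathbb{Z}_2$-orbifold theory of Section~3, applied to the order-two automorphism group $G=\langle\theta\rangle\cong\mathbb{Z}_2$, for which $V_L^+=V_L^{G}$. First I would confirm that $V_L$ satisfies the hypotheses (V1)--(V4) needed to run that machinery: for a rank-one positive definite even lattice $L$ the vertex operator algebra $V_L$ is simple, of CFT type, rational, $C_2$-cofinite, and self-dual, while every irreducible untwisted module $V_{L+\lambda_i}$ with $i\neq 0$ has positive lowest weight $\tfrac{i^2}{4k}$ and every $\theta$-twisted module $V_L^{T_s}$ has positive lowest weight $\tfrac{1}{16}$ (see \cite{FLM}), so that only $V_L$ itself has weight $0$. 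Granting this, Theorem~3.3 tells me that every irreducible $V_L^+$-module occurs as a component $M_\lambda$ in the $\mathbb{C}^{\alpha_M}[G_M]$-decomposition of some irreducible $g$-twisted $V_L$-module $M$ with $g\in\{1,\theta\}$. The entire problem therefore reduces to listing the irreducible untwisted and $\theta$-twisted $V_L$-modules and decomposing each under its stabilizer $G_M\subseteq\mathbb{Z}_2$.

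For the untwisted case ($g=1$) I would begin from the full list $V_{L+\lambda_i}$, $i=-k+1,\dots,k$, and compute the $\theta$-action, which carries $V_{L+\lambda_i}$ to $V_{L-\lambda_i}=V_{L+\lambda_{-i}}$. Hence $M\circ\theta\cong M$ as $V_L$-modules exactly when $i\equiv -i\pmod{2k}$, i.e.\ for $i=0$ and $i=k$, and $G_M=\{1\}$ otherwise. When $G_M=\{1\}$ there is a single component, and Theorem~3.2(1) guarantees that $V_{L+\lambda_i}$ is already irreducible over $V_L^+$; since $M\cong M\circ\theta$ as $V^{G}$-modules (the remark preceding Theorem~3.2), the pair $V_{L+\lambda_i}$ and $V_{L+\lambda_{-i}}$ yields one and the same $V_L^+$-module, giving the family $V_{\lambda_i+L}$ with $i\neq 0,k$. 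When $G_M=\mathbb{Z}_2$ (the cases $i=0,k$) I would use that $H^2(\mathbb{Z}_2,\mathbb{C}^\times)$ is trivial, so $\mathbb{C}^{\alpha_M}[\mathbb{Z}_2]\cong\mathbb{C}[\mathbb{Z}_2]$ has exactly two one-dimensional characters; by Theorem~3.1 the two multiplicity spaces are nonzero and inequivalent, and they are precisely the $\pm$-eigenspaces, producing $V_L=V_L^+\oplus V_L^-$ and $V_{L+\lambda_k}=V_{L+\lambda_k}^+\oplus V_{L+\lambda_k}^-$.

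For the twisted case ($g=\theta$) the inputs are the two irreducible $\theta$-twisted modules $V_L^{T_1}$ and $V_L^{T_2}$ arising from the two characters of $L/2L$. Since $\theta$ commutes with itself, each is $\theta$-stable, so $G_M=\mathbb{Z}_2$, and the same cocycle-triviality argument splits each one as $(V_L^{T_s})^+\oplus(V_L^{T_s})^-$, irreducible over $V_L^+$ by Theorem~3.1. Finally, Theorem~3.2(2) ensures that components coming from distinct $G$-orbits (untwisted versus $\theta$-twisted, and $V_L^{T_1}$ versus $V_L^{T_2}$) are mutually inequivalent, while Theorem~3.1(3) distinguishes the two eigenspaces inside any single $M$; collecting all the pieces then reproduces exactly the list in the statement, with completeness supplied by Theorem~3.3.

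The main obstacle is not the bookkeeping above but the two facts it rests on: the completeness of the list of irreducible $\theta$-twisted $V_L$-modules, and the irreducibility of each $\pm$-eigenspace as a $V_L^+$-module rather than its splitting further. In the orbifold approach both are delivered by Theorems~3.1--3.3 once (V1)--(V4) are verified, which is where I would concentrate the care; by contrast the original argument of \cite{DN} predates this machinery and instead establishes irreducibility and completeness directly through an analysis of Zhu's algebra $A(V_L^+)$ together with the explicit generators of $V_L^+$, which is considerably more laborious.
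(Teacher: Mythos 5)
Your route is necessarily different from the paper's, because the paper offers no proof of this statement at all: it is quoted from \cite{DN}, where the classification is established by a direct analysis of the Zhu algebra $A(V_L^+)$ together with the explicit generators of $V_L^+$. Your orbifold bookkeeping, by contrast, is the natural modern alternative, and most of it is sound: $\theta$ carries $V_{L+\lambda_i}$ to $V_{L+\lambda_{-i}}$, so the stabilizer is trivial unless $i=0$ or $i=k$; $H^2(\mathbb{Z}_2,\mathbb{C}^{\times})$ is trivial, so each of the four $\theta$-stable modules $V_L$, $V_{L+\lambda_k}$, $V_L^{T_1}$, $V_L^{T_2}$ splits into exactly two inequivalent irreducible eigenspace components; and Theorems 3.1--3.3 then supply irreducibility, mutual inequivalence, and completeness.

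However, there is a genuine gap precisely at the point where you say you would ``concentrate the care.'' Hypothesis (V3) of Section 3 requires that the \emph{fixed-point subalgebra} $V^G=V_L^+$ be rational and $C_2$-cofinite; what you actually verify is that $V_L$ is rational and $C_2$-cofinite, which is a different statement and does not formally imply it. Rationality and $C_2$-cofiniteness of $V_L^+$ are deep theorems in their own right, and the classical proofs of rationality of $V_L^+$ (Abe, building on Abe--Buhl--Dong and Yamskulna) use the classification of irreducible $V_L^+$-modules from \cite{DN} --- the very theorem you are proving --- so invoking them would make your argument circular. The non-circular repair is to invoke the orbifold regularity theorems of Miyamoto and Carnahan--Miyamoto (fixed points of a regular, self-contragredient, CFT-type vertex operator algebra under a solvable --- here cyclic of order two --- automorphism group are again regular), which are independent of \cite{DN} and are what make the \cite{DRX} machinery applicable in the first place. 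A second, smaller misstep: the completeness of the list $\{V_L^{T_1},V_L^{T_2}\}$ of irreducible $\theta$-twisted modules is an \emph{input} to Theorems 3.1--3.3, not an output of them; it follows from the modular-invariance counting of \cite{DLM2} (the number of inequivalent irreducible $\theta$-twisted modules is bounded by the number of $\theta$-stable irreducible $V_L$-modules, namely two, and two have been constructed), or from the direct classification of twisted modules for lattice vertex operator algebras. With these two inputs supplied and correctly attributed, your proof closes.
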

Let $L_2$ be the rank 1 positive definite even lattice with $(\alpha,\alpha)=2$. Then $x_1,x_2,x_3$ form an orthonormal basis of $(V_{L_2})_1$
\[x_1=\frac{1}{\sqrt{2}}\alpha(-1)\mathbf{1},x_2=\frac{1}{\sqrt{2}}(e^\alpha+e^{-\alpha}),x_3=\frac{i}{\sqrt{2}}(e^\alpha-e^{-\alpha}).\]
Let $\delta,\tau_i, \rho\in \mbox{Aut}(V_{L_2}), i = 1,2$ be the following 
\[\delta\left(x_{1},x_{2},x_{3}\right)=\left(x_{1},x_{2},x_{3}\right)\left[\begin{array}{ccc}
0 & 1 & 0\\
0 & 0 & -1\\
-1 & 0 & 0
\end{array}\right].
\]
\[
\tau_{1}\left(x_{1},x_{2},x_{3}\right)=\left(x_{1},x_{2},x_{3}\right)\left[\begin{array}{ccc}
1\\
 & -1\\
 &  & -1
\end{array}\right],
\]
\[
\tau_{2}\left(x_{1},x_{2},x_{3}\right)=\left(x_{1},x_{2},x_{3}\right)\left[\begin{array}{ccc}
-1\\
 & 1\\
 &  & -1
\end{array}\right],
\]
\[ \rho \left(x_{1},x_{2},x_{3}\right)=\left(x_{1},x_{2},x_{3}\right)\left[\begin{array}{ccc}
-1 & 0 & 0\\
0 & 0 & 1\\
0 & 1 & 0
\end{array}\right].
\]
It is easy to see that, for $i=1,2$, 
\[
\langle \tau_i, \delta, \rho \rangle \cong S_4, \quad 
\langle \tau_i, \delta \rangle \cong A_4, \quad 
\langle \tau_i \rangle \cong K_4,
\] 
and the $K_4$ generated by $\tau_i$ is a normal subgroup of both $S_4$ and $A_4$.
\begin{lemma}
    By \cite{DG},\cite{WZ}, we have $V_{L_2}^{K}=V_{\mathbb{Z}\beta}^+, V_{L_2}^{A_4}=(V_{\mathbb{Z}\beta}^+)^{<\delta>}, V_{L_2}^{S_4}=(V_{\mathbb{Z}\beta}^+)^{<\delta,\rho>}=(V_{L_2}^{A_4})^{\rho}$.
\end{lemma}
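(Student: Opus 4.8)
The plan is to establish the three claimed identifications
$V_{L_2}^{K}=V_{\mathbb{Z}\beta}^+$, $V_{L_2}^{A_4}=(V_{\mathbb{Z}\beta})^{\langle\delta\rangle}$, and
$V_{L_2}^{S_4}=(V_{L_2}^{A_4})^{\langle\rho\rangle}$ by unwinding the group-theoretic structure set up just above the statement. The first step is to pin down what the Klein four-group $K=\langle\tau_i\rangle$ does on $V_{L_2}$. Since $\tau_i$ acts diagonally on the degree-one space with eigenvalues $(\pm1,\pm1,\pm1)$ having product $-1$, the fixed-point space under all of $K$ should coincide with the $\theta$-fixed subalgebra of a rescaled lattice vertex operator algebra. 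Concretely, I would produce an explicit change of lattice: the element $\tau_i$ is conjugate (inside $O(\mathfrak{h})$ acting on the three operators $x_1,x_2,x_3$) to the lift of the $-1$ isometry on a rank-one lattice $\mathbb{Z}\beta$ with $(\beta,\beta)=8$ (the standard fact, used in \cite{DG}, that $V_{L_2}^{K_4}$ is isomorphic to $V_{\mathbb{Z}\beta}^+$ for the lattice of four times the norm). The key computation is to check that the three commuting involutions generating $K$ have a common fixed subalgebra realizable as $V_{\mathbb{Z}\beta}^+$, and here I would simply cite \cite{DG} and \cite{WZ} for the explicit isomorphism rather than rebuild it.

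Given $V_{L_2}^{K}=V_{\mathbb{Z}\beta}^+$, the second step is to pass to $A_4$. Since $K$ is normal in $A_4$ with quotient $A_4/K\cong\mathbb{Z}/3$ generated by the image of $\delta$, taking $K$-fixed points first and then the residual $\delta$-action gives
\[
V_{L_2}^{A_4}=(V_{L_2}^{K})^{\langle\delta\rangle}=(V_{\mathbb{Z}\beta}^+)^{\langle\delta\rangle}.
\]
Here I must verify that $\delta$ preserves $V_{\mathbb{Z}\beta}^+$ and descends to a well-defined order-three automorphism of it; this follows because $\delta$ normalizes $K$, so it acts on the $K$-invariants, and because $\delta^3=1$ in $A_4$. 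The statement writes this as $(V_{\mathbb{Z}\beta})^{\langle\delta\rangle}$, so I would also note that the $+$ condition from $K$ together with the induced $\delta$-invariance packages exactly into the displayed form; matching the two notations is a bookkeeping check that the $\theta$-eigenvalue constraint built into the $+$ superscript is consistent with how $\delta$ is being recorded.

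The third step is the extension from $A_4$ to $S_4$. Since $A_4$ is normal of index two in $S_4=\langle\tau_i,\delta,\rho\rangle$ with $S_4/A_4\cong\mathbb{Z}/2$ generated by the image of the order-two element $\rho$, iterating fixed points yields
\[
V_{L_2}^{S_4}=(V_{L_2}^{A_4})^{\langle\rho\rangle},
\]
which is precisely the last claimed equality. I would confirm that $\rho$ normalizes $A_4$ (immediate from $\langle\tau_i,\delta,\rho\rangle\cong S_4$) so that it acts on $V_{L_2}^{A_4}$, and that $\rho^2=1$. The general principle underlying all three steps is the elementary tower identity $V^{H}=(V^{N})^{H/N}$ for a normal subgroup $N\trianglelefteq H$ acting on $V$, applied successively to $K\trianglelefteq A_4\trianglelefteq S_4$.

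I expect the main obstacle to be the first identification $V_{L_2}^{K}=V_{\mathbb{Z}\beta}^+$, because it is the only step that is genuinely an isomorphism of vertex operator algebras between two a priori different constructions (a rank-one lattice VOA with its $\theta$-involution versus the $K_4$-fixed subalgebra of $V_{L_2}$) rather than a formal consequence of the subgroup lattice. The rest of the argument is the purely group-theoretic tower of fixed points and is essentially automatic once the normality relations $K\trianglelefteq A_4\trianglelefteq S_4$ (already asserted in the excerpt) are in hand. Accordingly, for the hard step I would lean on the explicit lattice isomorphism established in \cite{DG} and the module identifications in \cite{WZ}, and devote the written proof mainly to verifying the normalizing and order conditions that legitimize each successive $H/N$-action.
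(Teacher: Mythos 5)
Your proposal is correct and is essentially the paper's own treatment: the paper supplies no proof beyond the citation — the lemma is quoted directly from \cite{DG} and \cite{WZ} — and your reduction, deferring the substantive identification $V_{L_2}^{K}=V_{\mathbb{Z}\beta}^{+}$ to those references and then applying the tower identity $V^{H}=(V^{N})^{H/N}$ along $K\trianglelefteq A_4\trianglelefteq S_4$, is exactly the intended reading (including your bookkeeping remark that the middle identification should read $(V_{\mathbb{Z}\beta}^{+})^{\langle\delta\rangle}$: indeed $\delta$ normalizes $K$ but not $\langle\tau_1\rangle$, so it acts on $V_{\mathbb{Z}\beta}^{+}$ but not on $V_{\mathbb{Z}\beta}$). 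Two minor slips, neither of which your argument actually uses: the $\tau_i$ lie in $SO(3)$, so their eigenvalues $(1,-1,-1)$ have product $+1$, not $-1$; and the ``hard step'' needs no conjugation inside $O(\mathfrak{h})$ at all — it is a literal equality of subalgebras of $V_{L_2}$, since $\tau_1$ fixes $\alpha(-1)$ and sends $e^{m\alpha}\mapsto(-1)^m e^{m\alpha}$, giving $V_{L_2}^{\langle\tau_1\rangle}=M(1)\otimes\mathbb{C}[\mathbb{Z}\beta]=V_{\mathbb{Z}\beta}$ with $\beta=2\alpha$, on which $\tau_2$ restricts to $\theta$, so $V_{L_2}^{K}=(V_{\mathbb{Z}\beta})^{\langle\theta\rangle}=V_{\mathbb{Z}\beta}^{+}$ on the nose. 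This last point is worth preserving because conjugation would only yield an isomorphism, whereas the paper relies on such equalities as identities of subspaces (compare its later remark that $V_{L_2}^{\langle P\rangle}=V_{\mathbb{Z}\zeta}$, ``not just up to isomorphism'').
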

    By the remark 3.4 of \cite{DJJJY}, $SO(3)$ is the connected compact subgroup of $\mbox{Aut}(V_{L_2})$ that contains $Z_n,D_n,A_4,S_4, A_5$ as discrete subgroups. We can get $V_{L_2}^{Z_n}\cong V_{\mathbb{Z}n\alpha}$ and $V_{L_2}^{D_n}\cong V_{\mathbb{Z}n\alpha}^{+}$.

let us denote $\beta=2\alpha,\zeta=4\alpha$. By \cite{DG}, $V_{\mathbb{Z}\beta}^+$ is generated by $J=h(-1)^4\mathbf{1} -2h(-3)h(-1)\mathbf{1}+\frac{3}{2}h(-2)^2\mathbf{1},E_{\beta}=e^\beta+e^{-\beta}$ and $V_{\mathbb{Z}\zeta}^+$ is generated by $J,E_\zeta=e^{\zeta}+e^{-\zeta}$ where $h=\frac{1}{\sqrt{2}}\alpha$. By \cite{WZ}, $V_{L_2}^{S_4}$ is rational and $C_2$- cofinite, and all inequivalent irreducible $V_{L_2}^{S_4}$-modules are classified as follows and their quantum dimensions are calculated.
\begin{theorem}
    Any irreducible $V_{L_2}^{S_4}$- module is isomorphic to one of those modules:
    \begin{equation}
        ((V_{\mathbb{Z}\beta}^+)^0)^+,((V_{\mathbb{Z}\beta}^+)^0)^-,(V_\mathbb{Z\beta}^+)^1,(V_{\mathbb{Z}\beta}^-)^+,(V_{\mathbb{Z}\beta}^-)^-,
    \end{equation}
    \begin{eqnarray}
        (V_{\mathbb{Z\beta}+\frac{1}{4}\beta}^0)^+,(V_{\mathbb{Z\beta}+\frac{1}{4}\beta}^0)^-,V_{\mathbb{Z\beta}+\frac{1}{4}\beta}^1,
    \end{eqnarray}
    \begin{eqnarray}
        (V_{\mathbb{Z}\beta+\frac{1}{8}\beta})^+,(V_{\mathbb{Z}\beta+\frac{1}{8}\beta})^-,(V_{\mathbb{Z}\beta+\frac{3}{8}\beta})^+,(V_{\mathbb{Z}\beta+\frac{3}{8}\beta})^-,
    \end{eqnarray}
    \begin{eqnarray}
        V_{\mathbb{Z}\gamma+\frac{r}{18}\gamma},\mbox{for } 1\leq r \leq 8 \mbox{ and } r\neq 0\mbox{ (mod }3),
    \end{eqnarray}
    \begin{eqnarray}
        V_{\mathbb{Z}\zeta+\frac{s}{32}\zeta},\mbox{for } 1\leq s \leq 15 \mbox{ and } s\neq 0\mbox{ (mod }2),
    \end{eqnarray}
    \begin{eqnarray}
        V_{\mathbb{Z}\zeta}^{T_2,+},V_{\mathbb{Z}\zeta}^{T_2,-}
    \end{eqnarray}
\end{theorem}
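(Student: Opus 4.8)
The plan is to realize $V_{L_2}^{S_4}$ as a $\mathbb{Z}_2$-orbifold of $V_{L_2}^{A_4}$ and to run the orbifold decomposition machinery of Section~3. By Lemma~6.2 we have $V_{L_2}^{S_4}=(V_{L_2}^{A_4})^{\langle\rho\rangle}$; since the defining matrix of $\rho$ squares to the identity and $A_4$ is normal in $S_4$ with $S_4/A_4\cong\mathbb{Z}_2$, the element $\rho$ induces an order-two automorphism of the simple VOA $U:=V_{L_2}^{A_4}$, with $U^{\langle\rho\rangle}=V_{L_2}^{S_4}$. Write $G=\langle\rho\rangle\cong\mathbb{Z}_2$. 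After checking the hypotheses (V1)--(V4) for the pair $(U,G)$ (rationality and $C_2$-cofiniteness of $U^G=V_{L_2}^{S_4}$ are already known), Theorem~3.3 guarantees that every irreducible $V_{L_2}^{S_4}$-module occurs inside the decomposition of some irreducible $g$-twisted $U$-module with $g\in\{1,\rho\}$. The irreducible untwisted $U$-modules, i.e.\ the irreducible $V_{L_2}^{A_4}$-modules, are classified in \cite{DJ}. The argument therefore divides into the untwisted sector ($g=1$) and the $\rho$-twisted sector ($g=\rho$).

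In the untwisted sector I would let $\rho$ act on the finite set of irreducible $V_{L_2}^{A_4}$-modules and sort them into $\langle\rho\rangle$-orbits, deciding stability by comparing conformal weights with the explicit action of the matrix $\rho$ on lowest-weight spaces and on the lattice-coset data. Because $H^2(\mathbb{Z}_2;\mathbb{C}^\times)$ is trivial, the $2$-cocycle $\alpha_M$ of Section~3 is automatically a coboundary, so $\psi(\rho)$ can be normalized to an involution: by Theorem~3.1 every $\rho$-stable module $M$ splits as $M=M^+\oplus M^-$ into exactly two inequivalent irreducible $V_{L_2}^{S_4}$-modules, which is the source of the $\pm$-pairs in (6.1)--(6.3). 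By Theorem~3.2 each orbit $\{M,M\circ\rho\}$ of size two restricts to a single irreducible $V_{L_2}^{S_4}$-module with $M\cong M\circ\rho$, yielding the families (6.4) and (6.5) indexed by $r$ and $s$.

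For the $\rho$-twisted sector I would classify the irreducible $\rho$-twisted $U$-modules. Since $U=V_{L_2}^{A_4}$ sits inside the lattice VOA $V_{L_2}$ and $\rho$ lifts an isometry of $L_2$, these twisted modules are realized from the $\theta$-twisted modules $V_L^{T_s}$ and their coset analogues at the level $\zeta=4\alpha$; decomposing them under the residual $\langle\rho\rangle$-action by the same dichotomy produces the two twisted-sector modules $V_{\mathbb{Z}\zeta}^{T_2,\pm}$ of (6.6). To prove that (6.1)--(6.6) is complete and repetition-free I would use the quantum-dimension bookkeeping of Section~5: by Theorem~5.7, $\mathrm{glob}(V_{L_2}^{S_4})=|S_4|^2\,\mathrm{glob}(V_{L_2})=1152$ (as $V_{L_2}$ has two simple-current modules, $\mathrm{glob}(V_{L_2})=2$), and by Theorem~5.6 the quantum dimension of each listed module is recovered from that of the (twisted) $U$-module containing it. Verifying that $\sum_i(\mathrm{qdim}_{V_{L_2}^{S_4}}M_i)^2$ over the modules in (6.1)--(6.6) equals $1152$ then certifies exhaustiveness.

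The main obstacle will be the $\rho$-twisted sector. Unlike the untwisted case, where the triviality of the $\mathbb{Z}_2$-cocycle makes the eigenspace splitting automatic, producing the irreducible $\rho$-twisted $V_{L_2}^{A_4}$-modules explicitly, pinning down their lowest conformal weights, and confirming that no further twisted modules exist requires delicate control of the twisted lattice modules $V_L^{T_s}$ and of the projective action of $G_M$ on them. This identification of the abstract twisted sector with the concrete twisted lattice modules at the $\zeta=4\alpha$ level is where the bulk of the work lies, and it is the step whose verification, together with the global-dimension count, closes the classification.
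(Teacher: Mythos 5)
First, a point of comparison: the paper itself does not prove this theorem --- it is imported wholesale from \cite{WZ}, together with the rationality and $C_2$-cofiniteness of $V_{L_2}^{S_4}$, and the method used there (visible in this paper's proof of Lemma 6.8) is a one-step application of the orbifold theorems of Section 3 to the pair $(V_{L_2},S_4)$, decomposing $g$-twisted $V_{L_2}$-modules for representatives $g\in\{1,\tau_1,\delta,\rho,P\}$ of the five conjugacy classes. Your two-step route --- first pass to $U=V_{L_2}^{A_4}$ with its known module list \cite{DJ}, then run the $\mathbb{Z}_2$-orbifold machinery for $\langle\rho\rangle$ --- is a genuinely different and, in principle, workable strategy, and your global-dimension target $\operatorname{glob}(V_{L_2}^{S_4})=|S_4|^2\operatorname{glob}(V_{L_2})=1152$ is correct.

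However, your sector bookkeeping contains a genuine error that the proof cannot survive. By \cite{DJ} there are exactly $21$ irreducible $V_{L_2}^{A_4}$-modules; under the $\rho$-action, $5$ of them are stable (namely $(V_{\mathbb{Z}\beta}^+)^0$, $V_{\mathbb{Z}\beta}^-$, $V_{\mathbb{Z}\beta+\frac{\beta}{4}}^0$, $V_{\mathbb{Z}\beta+\frac{\beta}{8}}$, $V_{\mathbb{Z}\beta+\frac{3\beta}{8}}$), splitting into the ten $\pm$-pairs, and the remaining $16$ form $8$ orbits of size two (the pairs $\{(V_{\mathbb{Z}\beta}^+)^1,(V_{\mathbb{Z}\beta}^+)^2\}$ and $\{V_{\mathbb{Z}\beta+\frac{\beta}{4}}^1,V_{\mathbb{Z}\beta+\frac{\beta}{4}}^2\}$, plus six pairs matching the $\delta$-twisted with the $\delta^2$-twisted modules $W_{\delta,i}^k\leftrightarrow W_{\delta^2,i}^k$). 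So the untwisted sector yields exactly $10+8=18$ irreducible $V_{L_2}^{S_4}$-modules, which are precisely $M^0$--$M^{17}$, i.e.\ the families (6.1)--(6.4) and nothing more. Your assignment of family (6.5) to untwisted orbits is impossible: the eight modules $V_{\mathbb{Z}\zeta+\frac{s}{32}\zeta}$ (with lowest weights $\frac{s^2}{64}$, $s$ odd) do not occur as constituents of any of the $21$ modules of \cite{DJ}, and there are no modules left in that list to produce them. They must instead come out of the $\rho$-twisted sector together with $V_{\mathbb{Z}\zeta}^{T_2,\pm}$, so that sector supplies ten modules contributing $8\cdot 6^2+2\cdot 12^2=576$ to the global dimension --- exactly the half of $1152$ that your version (twisted sector $=$ family (6.6) alone, contributing only $288$) fails to account for; the completeness check you propose would itself expose the discrepancy. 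Relatedly, your realization of the $\rho$-twisted $U$-modules from the $\theta$-twisted $V_{L_2}$-modules $V_L^{T_s}$ is mistaken: since $\theta=\tau_2\in A_4$, those restrict to \emph{untwisted} $V_{L_2}^{A_4}$-modules. The genuinely $\rho$-twisted $V_{L_2}^{A_4}$-modules arise from $g$-twisted $V_{L_2}$-modules with $g$ in the odd coset $\rho A_4$ --- the transpositions and the $4$-cycles such as $P=\rho\tau_2$ --- which is precisely where the lowest weights $\frac{1}{64},\frac{9}{64}$ and the families (6.5), (6.6) are produced (compare the proof of Lemma 6.8).
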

\begin{theorem}
     The quantum dimensions for all irreducible $V_{L_2}^{S_4}$-modules over $V_{L_2}^{S_4}$
are given by the following tables.
\begin{center}
\begin{tabular}{|c|c|c|c|c|c|}
\hline
 & \(((V^+_{\mathbb{Z}\beta})^0)^+\) & \(((V^+_{\mathbb{Z}\beta})^0)^{-}\) & \((V^+_{\mathbb{Z}\beta})^1\)& \((V^-_{\mathbb{Z}\beta})^{+}\)& \((V^-_{\mathbb{Z}\beta})^{-}\)\tabularnewline
\hline
$\mathrm{qdim}$ & 1 & 1 & 2 & 3 & 3\tabularnewline
\hline
$$ & \(M^{0}\) & \(M^{1}\) & \(M^{2}\) & \(M^{3}\)& \(M^{4}\)\tabularnewline
\hline
\end{tabular}
\par\end{center}

\begin{center}
\begin{tabular}{|c|c|c|c|}
\hline
 & \((V_{\mathbb{Z}\beta+\frac{1}{4} \beta }^0)^+\) & \((V_{\mathbb{Z}\beta+\frac{1}{4} \beta }^0)^{-}\) & \(V_{\mathbb{Z}\beta+\frac{1}{4} \beta }^1\)\tabularnewline
\hline
$\mathrm{qdim}$ & 2 & 2 & 4\tabularnewline
\hline
$$ & \(M^{5}\) & \(M^{6}\)& \(M^{7}\)\tabularnewline
\hline
\end{tabular}
\par\end{center}

\begin{center}
\begin{tabular}{|c|c|c|c|c|}
\hline
 & \((V_{\mathbb{Z}\beta+\frac{1}{8} \beta })^+\) & \((V_{\mathbb{Z}\beta+\frac{1}{8} \beta })^-\) & \((V_{\mathbb{Z}\beta+\frac{3}{8} \beta })^+\)& \((V_{\mathbb{Z}\beta+\frac{3}{8} \beta })^{-}\)\tabularnewline
\hline
$\mathrm{qdim}$ & 6 & 6 & 6 & 6\tabularnewline
\hline
$$ & \(M^{8}\) & \(M^{9}\)& \(M^{10}\) & \(M^{11}\) \tabularnewline
\hline
\end{tabular}
\par\end{center}

\begin{center}
\begin{tabular}{|c|c|c|c|c|}
\hline
 & \(V_{\mathbb{Z}\gamma+\frac{r}{18}\gamma}\) & \(V_{\mathbb{Z}\zeta+\frac{s}{32}\zeta}\) & \(V_{\mathbb{Z}\zeta}^{T_2,+}\)& \(V_{\mathbb{Z}\zeta}^{T_2,-}\)\tabularnewline
\hline
$\mathrm{qdim}$ & 8 & 6 & 12 & 12\tabularnewline
\hline
$$ & \(M^{12},\ldots, M^{17}\) & \(M^{18}, \ldots, M^{25}\)& \(M^{26}\) & \(M^{27}\) \tabularnewline
\hline
\end{tabular}
\par\end{center}
In the last table, \(r,s\in \mathbb{Z}\), \(1\leq r\leq 8\), \(1\leq s\leq 15\), \(r\neq 0\) (mod 3), and \(s\neq 0\) (mod 2).
\end{theorem}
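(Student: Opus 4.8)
\section*{Proof proposal}

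The plan is to compute every entry from one structural formula for orbifold constituents and then to confirm the whole table at once by a global-dimension identity. Write $V=V_{L_2}$ and $G=S_4$, so $V^{G}=V_{L_2}^{S_4}$. By \cite{DRX} (the surjectivity statement, Theorem~3.3) each irreducible $V_{L_2}^{S_4}$-module is an irreducible constituent $M_\lambda$ in a Schur--Weyl decomposition $M=\bigoplus_\lambda W_\lambda\otimes M_\lambda$ of some irreducible $g$-twisted $V$-module $M$, $g\in G$, where $W_\lambda$ is the irreducible $\mathbb{C}^{\alpha_M}[G_M]$-module indexed by $\lambda$ and $G_M$ is the stabilizer of $M$ in $G$. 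The first step is to establish the refinement of Theorem~5.6
\[
\operatorname{qdim}_{V_{L_2}^{S_4}}M_\lambda=\frac{|S_4|}{|G_M|}\,(\dim W_\lambda)\,\operatorname{qdim}_{V_{L_2}}M .
\]
I would derive this by projecting the graded trace of $M$ onto its $\lambda$-isotypic part using the twining characters $h\mapsto Z_M(\psi(h),\tau)$ for $h\in G_M$: as $y=\operatorname{Im}\tau\to0^{+}$, the $\tau\mapsto-1/\tau$ image of each $h\neq e$ term lies in a twisted sector whose minimal conformal weight is strictly positive by (V4), hence is exponentially suppressed against $Z_{V^{G}}$, exactly as in the computation of Lemma~4.2 of \cite{DJX}. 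Only the $h=e$ term survives, giving $\operatorname{qdim}_{V^{G}}M_\lambda=\tfrac{\dim W_\lambda}{|G_M|}\operatorname{qdim}_{V^{G}}M$; combining with $\operatorname{qdim}_{V^{G}}M=|G|\operatorname{qdim}_V M$ (Theorem~5.6) yields the displayed identity, and additivity of quantum dimension over $M=\bigoplus_\lambda M_\lambda^{\oplus\dim W_\lambda}$ provides the built-in consistency $\sum_\lambda(\dim W_\lambda)^2=|G_M|$.

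Next I would assemble the seed data $\operatorname{qdim}_{V_{L_2}}M$, organized by the conjugacy classes of $S_4\subset SO(3)$ (identity, the two classes of involutions, the $3$-cycles, the $4$-cycles). For the untwisted sector the two irreducible $V_{L_2}$-modules $V_{L_2}$ and $V_{L_2+\frac12\alpha}$ are lattice simple currents, so their seed quantum dimension is $1$; in particular the five constituents of $V_{L_2}$ reproduce the ordinary representation degrees $1,1,2,3,3$ of $S_4$, matching $M^0,\dots,M^4$. For the genuinely twisted sectors I would use the identifications $V_{L_2}^{Z_n}\cong V_{\mathbb{Z}n\alpha}$ and $V_{L_2}^{D_n}\cong V_{\mathbb{Z}n\alpha}^{+}$ from \cite{DJJJY} together with the standard rank-one orbifold quantum dimensions (a coset module $V_{\mathbb{Z}m\alpha+\mu}$ has quantum dimension governed by $\mu$, and a $\theta$-twisted module over $V_L^{+}$ has quantum dimension $\sqrt{k}$ when $(\alpha,\alpha)=2k$); this is where the sublattices $\mathbb{Z}\beta=\mathbb{Z}2\alpha$, $\mathbb{Z}\gamma=\mathbb{Z}3\alpha$, $\mathbb{Z}\zeta=\mathbb{Z}4\alpha$ enter, attached to the involutions, the $3$-cycles and the $4$-cycles respectively. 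Transitivity of quantum dimension along the tower $V_{L_2}^{S_4}\subset V_{L_2}^{A_4}\subset V_{L_2}^{K_4}=V_{\mathbb{Z}\beta}^{+}\subset V_{L_2}$, which follows from factoring the ratio $Z_M/Z_{V_{L_2}^{S_4}}$, lets me import these intermediate values cleanly.

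With the seeds in hand, the bulk of the work, and the main obstacle, is the case-by-case determination, for each of the twenty-eight modules of the classification theorem, of the triple $(G_M,\alpha_M,\dim W_\lambda)$. The crucial subtlety is that the cocycle $\alpha_M$ is frequently nontrivial, so that $\mathbb{C}^{\alpha_M}[G_M]$ is not the ordinary group algebra and its irreducible degrees are the projective (spin) degrees coming from the Schur multiplier; it is precisely this twisting that produces the degrees behind $2,2,4$ for $M^5,M^6,M^7$ (the faithful projective irreducibles of $S_4$, with $2^2+2^2+4^2=24$), the uniform $6$ for $M^8,\dots,M^{11}$ and $M^{18},\dots,M^{25}$, the $8$ for $M^{12},\dots,M^{17}$, and the $12$ for the order-four twisted modules $M^{26},M^{27}$. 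I would pin down each $(G_M,\alpha_M,\dim W_\lambda)$ from the explicit action of $\tau_i,\delta,\rho$ on $x_1,x_2,x_3$ and the module identifications of Section~6, then read off $\operatorname{qdim}_{V_{L_2}^{S_4}}M_\lambda$ from the displayed formula.

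Finally I would verify the entire table simultaneously. Since $V_{L_2}$ has exactly the two simple-current modules $V_{L_2}$ and $V_{L_2+\frac12\alpha}$, we have $\operatorname{glob}(V_{L_2})=1^2+1^2=2$, so by Theorem~5.7 $\operatorname{glob}(V_{L_2}^{S_4})=|S_4|^2\operatorname{glob}(V_{L_2})=24^2\cdot2=1152$. Summing the squares of the tabulated quantum dimensions gives $2+4+18+8+16+144+384+288+288=1152$, which confirms both that the module list is complete and that every quantum dimension has been correctly apportioned among the orbifold constituents.
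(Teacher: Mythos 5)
Your skeleton is the right one---and it is worth noting that the paper itself offers no proof of this theorem at all: Theorem~6.4 is imported wholesale from \cite{WZ}, and the computation in \cite{WZ} is precisely the scheme you describe (the decomposition $M=\bigoplus_\lambda W_\lambda\otimes M_\lambda$ of each twisted sector, the refined formula $\operatorname{qdim}_{V^{G}}M_\lambda=\dim W_\lambda\,[G:G_M]\operatorname{qdim}_V M$, and the seed value $\operatorname{qdim}_{V_{L_2}}M=1$ for every irreducible twisted module). Your derivation of the refined formula and your untwisted-sector analysis ($1,1,2,3,3$ from the ordinary degrees of $S_4$; $2,2,4$ from the spin degrees attached to $V_{L_2+\frac{1}{2}\alpha}$) are correct. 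But your preview of the twisted sectors contains errors that would derail the computation if carried out as written. $M^{26},M^{27}$ are \emph{not} the order-four twisted modules: by Lemma~6.8 of the paper they arise from the $\rho$-twisted sector, where $\rho$ is a transposition of order two, with stabilizer $C_{S_4}(\rho)\cong\mathbb{Z}_2\times\mathbb{Z}_2$ carrying its nontrivial $2$-cocycle (unique $2$-dimensional projective irreducible), giving $\operatorname{qdim}=2\cdot 6\cdot 1=12$. From the order-four element $P$ the value $12$ is unreachable by your own formula: $C_{S_4}(P)=\langle P\rangle\cong\mathbb{Z}_4$ is cyclic, its Schur multiplier is trivial, so every $\dim W_\lambda=1$ and every constituent has quantum dimension $[S_4:\mathbb{Z}_4]=6$; that sector produces $M^{18},\dots,M^{25}$. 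For the same reason the $8$'s ($M^{12}$--$M^{17}$, stabilizer $\mathbb{Z}_3$) and the $6$'s ($M^{18}$--$M^{25}$) are not ``produced by the twisting'': those cocycles are forced to be trivial, and the values come purely from the index $[S_4:G_M]$. Nontrivial cocycles occur in exactly three places: the untwisted module $V_{L_2+\frac{1}{2}\alpha}$, the double-transposition ($\tau_i$) sector whose $D_8$ stabilizer has two projective $2$'s (giving $M^{8}$--$M^{11}$), and the transposition ($\rho$) sector.

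The more structural gap is your closing claim that the global-dimension identity ``confirms that every quantum dimension has been correctly apportioned among the orbifold constituents.'' It cannot: that check is blind to the cocycles, for exactly the reason you cite as built-in consistency, namely $\sum_\lambda(\dim W_\lambda)^2=|G_M|$ holds for \emph{every} $2$-cocycle. Concretely, if one wrongly assigned the trivial cocycle to the $\tau$-sector, each $\tau$-twisted module would split into constituents of quantum dimensions $3,3,3,3,6$ instead of $6,6$; the contribution to $\operatorname{glob}(V_{L_2}^{S_4})$ is $72$ per twisted module in either case, and the total still comes out to $1152$. So the final verification confirms the orbit structure, stabilizers and seed dimensions, but not the apportionment, which is the actual content of the theorem. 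To close the argument you must genuinely determine each $\alpha_M$ --- either from the explicit projective action $\psi(h)$, $h\in G_M$, on each twisted module, or by matching the number of constituents per sector against the classification (Theorem~6.3 gives exactly $28$ irreducibles, distributed $8+4+6+8+2$ over the five conjugacy classes) --- and your proposal only promises this step rather than performing it, while previewing its outcome incorrectly.
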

Let $P = \rho \circ \tau_2$ and note that 
\[
\langle P, \tau_2 \rangle = \langle \tau_1, \tau_2, \rho \rangle \cong D_8.
\] 
By direct computation, $P$ maps $\alpha(-1)$ to $\alpha(-1)$, $e^\alpha$ to $i e^\alpha$, and $e^{-\alpha}$ to $-i e^{-\alpha}$, so $P$ fixes $e^{\zeta}$ and $\zeta(-1)$. Therefore, $V_{L_2}^{\langle P \rangle} = V_{\mathbb{Z}\zeta}$, not just up to isomorphism. On the other hand, $\tau_2$ is exactly the automorphism $\theta$ discussed above, so
\[
(V_{L_2}^{\langle P \rangle})^{\langle \tau_2 \rangle} = (V_{L_2})^{\langle P, \tau_2 \rangle} = V_{\mathbb{Z}\zeta}^+ = (V_{\mathbb{Z}\beta}^+)^{\langle \rho \rangle}.
\] 
With some abuse of notation, here $M^{\pm}$ will denote the $\pm 1$-eigenspace with respect to $\psi(\rho)$ if $M$ is a $V_{\mathbb{Z}\beta}^+$-module.

\begin{proposition}
    As $V_{L_2}^{S_4}$-modules, we have the following identification:
    \begin{equation}
        V_{\mathbb{Z}\beta}^{++}\cong V_{\mathbb{Z}\zeta}^{+}\cong ((V_{\mathbb{Z}\beta}^+)^0)^+ \oplus (V_{\mathbb{Z}\beta}^+)^1\label{6.1}
    \end{equation}
    \begin{equation}
        V_{\mathbb{Z}\beta}^{+-}\cong V_{\mathbb{Z}\zeta+\frac{1}{2}\zeta}^+\cong ((V_{\mathbb{Z}\beta}^+)^0)^- \oplus (V_{\mathbb{Z}\beta}^+)^1\label{6.2}
    \end{equation}
    
    \begin{equation}
        V_{\mathbb{Z}\beta}^{-+}\cong V_{\mathbb{Z}\zeta}^-\label{6.3}
    \end{equation}
    \begin{equation}
        V_{\mathbb{Z}\beta}^{--}\cong V_{\mathbb{Z}\zeta+\frac{1}{2}\zeta}^-\label{6.4}
    \end{equation}
    \begin{equation}
        V_{\frac{1}{8}\beta+\mathbb{Z}\beta}^+\cong V_{\frac{2}{32}\zeta+\mathbb{Z}\zeta}\label{6.5}
    \end{equation}
    \begin{equation}
        V_{\frac{3}{8}\beta+\mathbb{Z}\beta}^+\cong V_{\frac{6}{32}\zeta+\mathbb{Z}\zeta}\label{6.6}
    \end{equation}
    \begin{equation}
        V_{\frac{1}{8}\beta+\mathbb{Z}\beta}^-\cong V_{\frac{14}{32}\zeta+\mathbb{Z}\zeta}\label{6.7}
    \end{equation}
    \begin{equation}
        V_{\frac{3}{8}\beta+\mathbb{Z}\beta}^-\cong V_{\frac{10}{32}\zeta+\mathbb{Z}\zeta}\label{6.8}
    \end{equation}
   \begin{equation}
       V_{\frac{1}{4}\beta+\mathbb{Z}\beta}^+\cong V_{\frac{4}{32}\zeta+\mathbb{Z}\zeta}\cong (V_{\frac{1}{4}\beta +\mathbb{Z}\beta}^0)^+ \oplus V_{\frac{1}{4}\beta +\mathbb{Z}\beta}^1\label{6.9}
   \end{equation}
   \begin{equation}
       V_{\frac{1}{4}\beta+\mathbb{Z}\beta}^-\cong V_{\frac{12}{32}\zeta+\mathbb{Z}\zeta}\cong (V_{\frac{1}{4}\beta +\mathbb{Z}\beta}^0)^- \oplus V_{\frac{1}{4}\beta +\mathbb{Z}\beta}^1\label{6.10}
   \end{equation}
   \begin{equation}
       V_{\mathbb{Z}\zeta}^{T_1,+}\cong (V_{\mathbb{Z}\beta+\frac{\beta}{8}})^+\oplus(V_{\mathbb{Z}\beta+\frac{\beta}{8}})^-\label{6.11}
   \end{equation}
   \begin{equation}
       V_{\mathbb{Z}\zeta}^{T_1,-}\cong (V_{\mathbb{Z}\beta+\frac{3\beta}{8}})^+\oplus(V_{\mathbb{Z}\beta+\frac{3\beta}{8}})^-\label{6.12}
   \end{equation}
   \begin{equation}
       V_{\mathbb{Z}\beta+\frac{1}{2}\beta}^+\cong V_{\mathbb{Z}\beta+\frac{\beta}{2}}^-\cong V_{\frac{8\zeta}{32}+\mathbb{Z}\zeta} \label{1}
   \end{equation}
    
\end{proposition}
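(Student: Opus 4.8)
The unifying idea is to model the eigenspace operator $\psi(\rho)$ by the concrete automorphism $P=\rho\circ\tau_{2}$. Since $\tau_{2}\in K_{4}$ acts trivially on $V_{\mathbb{Z}\beta}^{+}=V_{L_2}^{K_4}$, we have $P|_{V_{\mathbb{Z}\beta}^{+}}=\rho|_{V_{\mathbb{Z}\beta}^{+}}$, so on every $V_{\mathbb{Z}\beta}^{+}$-module the action of $P$ may be used in place of $\psi(\rho)$; crucially $P$ fixes $V_{\mathbb{Z}\zeta}$ pointwise, fixes every Heisenberg mode $\alpha(-n)$, and acts on $e^{c\alpha}$ by the phase $e^{\pi i c/2}$. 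First I would record the lattice bookkeeping: $\zeta=2\beta$, $(\beta,\beta)=8$, $(\zeta,\zeta)=32$, so $\mathbb{Z}\zeta\subset\mathbb{Z}\beta$ has index $2$ and every untwisted module splits as $V_{\frac{j}{8}\beta+\mathbb{Z}\beta}=V_{\frac{2j}{32}\zeta+\mathbb{Z}\zeta}\oplus V_{\frac{2j+16}{32}\zeta+\mathbb{Z}\zeta}$, the two summands distinguished exactly by the $(-1)^{n}$ coset parity, i.e.\ by the sign of $P$. The two invariants I would use to name the resulting irreducibles are the lowest conformal weight $\tfrac12(\mu,\mu)$ (minimised over the coset) and the quantum dimension read off from the tables above, which together separate the modules within each family in the relevant range; I would also use the remark $M\cong M\circ h$ as a $V^{G}$-module for $h\in G$, which gives $V_{\frac{s}{32}\zeta+\mathbb{Z}\zeta}\cong V_{\frac{32-s}{32}\zeta+\mathbb{Z}\zeta}$ as $V_{L_2}^{S_4}$-modules.

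For \eqref{6.1}--\eqref{6.4} I would decompose the algebra $V_{\mathbb{Z}\beta}^{+}$ and its module $V_{\mathbb{Z}\beta}^{-}$ under $S_4/K_4\cong S_3=\langle\delta,\rho\rangle$ using the Schur--Weyl decomposition of \cite{DRX}: writing the irreducible $S_3$-characters as $\mathrm{triv},\mathrm{sgn},W_{2}$, the algebra contributes $M^{0}$ (type $\mathrm{triv}$, $\mathrm{qdim}\,1$), $M^{1}$ (type $\mathrm{sgn}$, $\mathrm{qdim}\,1$) and $M^{2}$ (type $W_{2}$, $\mathrm{qdim}\,2$). Restricting this $S_3$-structure to $\langle\rho\rangle$ and using $W_{2}|_{\langle\rho\rangle}=\mathrm{triv}\oplus\mathrm{sgn}$ reads off $V_{\mathbb{Z}\beta}^{++}=M^{0}\oplus M^{2}$ and $V_{\mathbb{Z}\beta}^{+-}=M^{1}\oplus M^{2}$, which are the right-hand sides of \eqref{6.1}, \eqref{6.2}. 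The identifications $V_{\mathbb{Z}\beta}^{++}\cong V_{\mathbb{Z}\zeta}^{+}$, $V_{\mathbb{Z}\beta}^{+-}\cong V_{\mathbb{Z}\zeta+\frac12\zeta}^{+}$, and $V_{\mathbb{Z}\beta}^{-\pm}\cong V_{\mathbb{Z}\zeta}^{-},V_{\mathbb{Z}\zeta+\frac12\zeta}^{-}$ I would obtain by exhibiting the lowest-weight vector of each $P$-eigenspace: $J,E_{\zeta}$ are $P$-fixed (lowest weight $0$, giving $V_{\mathbb{Z}\zeta}^{+}$), $E_{\beta}=e^{\beta}+e^{-\beta}$ has $P=-1$ at weight $4$ (giving $V_{\mathbb{Z}\zeta+\frac12\zeta}^{+}$), $\beta(-1)\mathbf 1$ is $P$-fixed and $\theta$-odd at weight $1$ (giving $V_{\mathbb{Z}\zeta}^{-}$), and $e^{\beta}-e^{-\beta}$ has $P=-1$, is $\theta$-odd at weight $4$ (giving $V_{\mathbb{Z}\zeta+\frac12\zeta}^{-}$).

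For \eqref{6.5}--\eqref{6.10} I would apply the coset splitting directly: $V_{\frac{j}{8}\beta+\mathbb{Z}\beta}^{\pm}$ are the two $P$-eigenspaces, hence the two $\mathbb{Z}\zeta$-coset summands $V_{\frac{2j}{32}\zeta+\mathbb{Z}\zeta}$ and $V_{\frac{2j+16}{32}\zeta+\mathbb{Z}\zeta}$; matching lowest weights $\tfrac{s^{2}}{64}$ and using $V_{\frac{s}{32}\zeta}\cong V_{\frac{32-s}{32}\zeta}$ pins the sign (for $j=1$ the summands are $s=2$ and $s=18\sim14$, giving \eqref{6.5} and \eqref{6.7}). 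The triple identifications in \eqref{6.9}, \eqref{6.10} then decompose the middle $V_{\mathbb{Z}\zeta}^{+}$-module further into $V_{L_2}^{S_4}$-modules by the same $\delta$-action/$S_3$ bookkeeping as above. For the twisted identifications \eqref{6.11}, \eqref{6.12} I would compare characters: the $\theta$-twisted module $V_{\mathbb{Z}\zeta}^{T_1}$ has twisted ground energy $\tfrac1{16}$, and I would verify the theta-function identity expressing the graded character of its $\pm$-eigenspace as the sum of the characters of the two untwisted cosets $V_{\frac{\beta}{8}+\mathbb{Z}\beta}^{\pm}$ (resp.\ $\tfrac{3\beta}{8}$), with the quantum-dimension check $6+6=12$ and the lowest-weight check $\tfrac1{16}$ (resp.\ $\tfrac9{16}$) confirming the match. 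Finally, in \eqref{1}, $\tfrac12\beta=\lambda_k$, so $V_{\mathbb{Z}\beta+\frac12\beta}=V_{\frac{8}{32}\zeta+\mathbb{Z}\zeta}\oplus V_{\frac{24}{32}\zeta+\mathbb{Z}\zeta}$ with the two summands isomorphic and interchanged by $\theta$; the two $\theta$-eigenspaces $V_{\mathbb{Z}\beta+\frac12\beta}^{\pm}$ are therefore each isomorphic, via projection, to a single summand $V_{\frac{8}{32}\zeta+\mathbb{Z}\zeta}$.

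The main obstacle I anticipate is the consistent bookkeeping of signs across all thirteen statements at once: $\psi(\rho)$ is canonical only up to an overall scalar, so one must fix one normalisation of $P$ (and of the $\theta$-action on the twisted sectors) and then check that every ``$+$'' and ``$-$'' on the two sides of \eqref{6.1}--\eqref{6.12} and \eqref{1} is assigned compatibly rather than interchanged; concretely this means computing $P$ on the explicit lowest-weight vectors $e^{\pm\alpha},e^{\pm\beta},\zeta(-1)\mathbf 1$ and on the twisted ground states and verifying that one global choice works everywhere. The secondary difficulty is making the twisted/untwisted character comparison in \eqref{6.11}, \eqref{6.12} rigorous, since $V_{\mathbb{Z}\zeta}^{T_1}$ is built from the half-integrally moded space $M(1)(\theta)$; here I would lean on the explicit twisted-module construction and twisted characters of \cite{FLM},\cite{DN}. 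Once the sign normalisation and this single character identity are settled, every remaining identification follows mechanically from lowest-weight plus quantum-dimension matching together with the $S_3$-representation bookkeeping.
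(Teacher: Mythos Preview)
Your approach is correct and shares the paper's central idea: modelling $\psi(\rho)$ on $V_{\mathbb{Z}\beta}^{+}$-modules by the concrete automorphism $P=\rho\tau_{2}$, decomposing each module over $V_{\mathbb{Z}\zeta}^{+}$, and then pinning down the label by lowest conformal weight (with quantum dimensions as a check). For \eqref{6.5}--\eqref{6.10} and \eqref{1} the two arguments are nearly identical; the paper simply makes the normalisation choice $\psi(\rho)(e^{\beta/8})=e^{\beta/8}$ explicit and computes $Y(E_{\beta},z)e^{\beta/8}$ to locate the lowest weight of the $(-)$-piece, whereas you read the same sign off from the $P$-eigenvalue $e^{\pi i c/2}$ on $e^{c\alpha}$ directly.

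The two places where you genuinely diverge are \eqref{6.1}--\eqref{6.4} and \eqref{6.11}--\eqref{6.12}. For \eqref{6.1}--\eqref{6.4} the paper is more computational: after identifying all the other pieces it argues by elimination that $V_{\mathbb{Z}\beta}^{++}\cong V_{\mathbb{Z}\zeta}^{+}$ and $V_{\mathbb{Z}\beta}^{-+}\cong V_{\mathbb{Z}\zeta}^{-}$, then checks $(E_{\zeta})_{(15)}E_{\beta}=E_{\beta}$ to place $V_{\mathbb{Z}\beta}^{+-}$ in $V_{\mathbb{Z}\zeta+\zeta/2}^{+}$, and only at the very end gets the right-hand decompositions by quantum dimension. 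Your use of the branching $W_{2}|_{\langle\rho\rangle}=\mathrm{triv}\oplus\mathrm{sgn}$ is cleaner and gives those decompositions immediately. For \eqref{6.11}--\eqref{6.12} the paper avoids your proposed theta-identity entirely: it simply chains known identifications
\[
V_{\mathbb{Z}\beta}^{T_{1},\pm}\cong V_{\mathbb{Z}\beta}^{T_{2},\pm}\cong V_{\mathbb{Z}\zeta}^{T_{1},\pm}
\quad\text{(from \cite{WZ}),}\qquad
V_{\mathbb{Z}\beta+\frac{\beta}{8}}\cong V_{\mathbb{Z}\beta}^{T_{1},+},\;\;
V_{\mathbb{Z}\beta+\frac{3\beta}{8}}\cong V_{\mathbb{Z}\beta}^{T_{1},-}
\quad\text{(from \cite{DJ}),}
\]
the first as $V_{\mathbb{Z}\zeta}^{+}$-modules and the second as $V_{L_{2}}^{A_{4}}$-modules, which together yield the statement with no character computation. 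Your route works but is more laborious; the paper's shortcut buys you \eqref{6.11}--\eqref{6.12} for free once those two references are in hand.
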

\begin{proof}
    First, we prove (\ref{6.5})--(\ref{6.8}). By the definition of $\rho$, we have $\rho(J)=J$ and $\rho(E_{\beta})=-E_{\beta}$. On the one hand, by \cite{WZ}, we have
\[
V_{\frac{\beta}{8}+\mathbb{Z}\beta}
\cong
V_{\frac{\beta}{8}+\mathbb{Z}\beta}^+
\oplus
V_{\frac{\beta}{8}+\mathbb{Z}\beta}^-
\]
as a $V_{L_2}^{S_4}$-module. On the other hand, applying Theorem~3.1 to decompose $V_{\frac{\beta}{8}+\mathbb{Z}\beta}$ into $V_{\mathbb{Z}\zeta}^+$-modules, we again obtain
\[
V_{\frac{\beta}{8}+\mathbb{Z}\beta}
\cong
V_{\frac{\beta}{8}+\mathbb{Z}\beta}^+
\oplus
V_{\frac{\beta}{8}+\mathbb{Z}\beta}^-
\]
as a $V_{\mathbb{Z}\zeta}^+$-module. Thus, $V_{\frac{\beta}{8}+\mathbb{Z}\beta}^+$ and $V_{\frac{\beta}{8}+\mathbb{Z}\beta}^-$ must be isomorphic to irreducible $V_{\mathbb{Z}\zeta}^+$-modules listed in Theorem~6.1.

Without loss of generality, we may assume that $\psi(\rho)(e^{\frac{\beta}{8}})=e^{\frac{\beta}{8}}$. Then the conformal weight of $V_{\frac{\beta}{8}+\mathbb{Z}\beta}^+$ is $\frac{1}{16}$. By the definition of $\psi(\rho)$, we have
\[
\psi(\rho)Y(E_{\beta},z)=-Y(E_{\beta},z)\psi(\rho).
\]
Hence,
\[
(E_{\beta})_{(n)}e^{\frac{\beta}{8}} \in V_{\frac{\beta}{8}+\mathbb{Z}\beta}^-
\]
for every $n$ such that $(E_{\beta})_{(n)}e^{\frac{\beta}{8}}\neq 0$. By direct calculation,
\[
Y(E_{\beta},z)e^{\frac{\beta}{8}}
=
z E^{-}(-\beta,z)e^{\frac{9\beta}{8}}
+
z^{-1}E^{-}(\beta,z)e^{-\frac{7\beta}{8}}.
\]
Comparing with conformal weights of irreducible $V_{\mathbb{Z}\zeta}^+$-modules, it follows that the conformal weight of $V_{\frac{\beta}{8}+\mathbb{Z}\beta}^-$ is $\frac{49}{16}$. we obtain (\ref{6.5}) and (\ref{6.7}). Applying the same argument yields (\ref{6.6}) and (\ref{6.8}).

Next, we prove (\ref{6.9}) and (\ref{6.10}). Using a similar method, we find that $V_{\frac{\beta}{4}+\mathbb{Z}\beta}^+$ has conformal weight $\frac{1}{4}$ and $V_{\frac{\beta}{4}+\mathbb{Z}\beta}^-$ has conformal weight $\frac{9}{4}$. Hence,
\[
V_{\frac{\beta}{4}+\mathbb{Z}\beta}^+
\cong
V_{\frac{4}{32}\zeta+\mathbb{Z}\zeta},
\qquad
V_{\frac{\beta}{4}+\mathbb{Z}\beta}^-
\cong
V_{\frac{12}{32}\zeta+\mathbb{Z}\zeta}
\]
as $V_{\mathbb{Z}\zeta}^+$-modules. The corresponding decomposition as $V_{L_2}^{S_4}$-modules follows from Theorem~6.4.

By \cite{DN}, we have
\[
V_{\mathbb{Z}\beta+\frac{\beta}{2}}^+
\cong
V_{\mathbb{Z}\beta+\frac{\beta}{2}}^-
\]
as $V_{\mathbb{Z}\zeta}^+$-modules, and both are isomorphic to $V_{\frac{8}{32}\zeta+\mathbb{Z}\zeta}$. By \cite{WZ},
\[
V_{\mathbb{Z}\beta}^{T_1,+}
\cong
V_{\mathbb{Z}\beta}^{T_2,+}
\cong
V_{\mathbb{Z}\zeta}^{T_1,+},
\qquad
V_{\mathbb{Z}\beta}^{T_1,-}
\cong
V_{\mathbb{Z}\beta}^{T_2,-}
\cong
V_{\mathbb{Z}\zeta}^{T_1,-}
\]
as $V_{\mathbb{Z}\zeta}^+$-modules. On the other hand, by \cite{DJ},
\[
V_{\mathbb{Z}\beta+\frac{\beta}{8}}
\cong
V_{\mathbb{Z}\beta}^{T_1,+}
\cong
V_{\mathbb{Z}\beta}^{T_2,+},
\qquad
V_{\mathbb{Z}\beta+\frac{3\beta}{8}}
\cong
V_{\mathbb{Z}\beta}^{T_1,-}
\cong
V_{\mathbb{Z}\beta}^{T_2,-}
\]
as $V_{L_2}^{A_4}$-modules. Thus, (\ref{6.11}) and (\ref{6.12}) follow.

Comparing with the remaining inequivalent $V_{\mathbb{Z}\zeta}^+$-modules, it is clear that
\[
V_{\mathbb{Z}\beta}^{++} \cong V_{\mathbb{Z}\zeta}^+,
\qquad
V_{\mathbb{Z}\beta}^{-+} \cong V_{\mathbb{Z}\zeta}^-.
\]
By an argument similar to that used to prove (\ref{6.5}), we see that $V_{\mathbb{Z}\beta}^{+-}$ is generated by $E_{\beta}$. A direct computation shows that
\[
(E_{\zeta})_{(15)}(E_{\beta}) = E_{\beta}.
\]
Therefore, by \cite{DN},
\[
V_{\mathbb{Z}\beta}^{+-}
\cong
V_{\mathbb{Z}\zeta+\frac{1}{2}\zeta}^+,
\qquad
V_{\mathbb{Z}\beta}^{--}
\cong
V_{\mathbb{Z}\zeta+\frac{1}{2}\zeta}^-
\]
as $V_{\mathbb{Z}\zeta}^+$-modules and hence as $V_{L_2}^{S_4}$-modules. Finally, (\ref{6.1}) and (\ref{6.2}) follow from quantum dimensions.

\end{proof}
\begin{corollary}
    \begin{equation}
        V_{\mathbb{Z}\zeta}\cong ((V_{\mathbb{Z}\beta}^+)^0)^+ \oplus (V_{\mathbb{Z}\beta}^+)^1 \oplus (V_{\mathbb{Z}\beta}^-)^+\label{6.13}
    \end{equation}
    \begin{equation}
        V_{\mathbb{Z}\zeta+\frac{\zeta}{4}}\cong(V_{\mathbb{Z}\beta}^-)^+\oplus (V_{\mathbb{Z}\beta}^-)^- \label{6.14}
    \end{equation}
    \begin{equation}
        V_{\mathbb{Z}\zeta+\frac{\zeta}{2}}\cong ((V_{\mathbb{Z}\beta}^+)^0)^- \oplus (V_{\mathbb{Z}\beta}^+)^1\oplus (V_{\mathbb{Z}\beta}^-)^-\label{6.15}
    \end{equation}
    \begin{equation}
        V_{\mathbb{Z}\zeta+\frac{3\zeta}{4}}\cong(V_{\mathbb{Z}\beta}^-)^+\oplus (V_{\mathbb{Z}\beta}^-)^- \label{6.16}
    \end{equation}
    \begin{equation}
        V_{\mathbb{Z}\zeta+\frac{\zeta}{8}\cong (V_{\mathbb{Z}\beta+\frac{\beta}{4}}^0)^+\oplus (V_{\mathbb{Z}\beta+\frac{\beta}{4}}^1)} \label{6.17}
    \end{equation}
    \begin{equation}
        V_{\mathbb{Z}\zeta+\frac{3\zeta}{8}\cong (V_{\mathbb{Z}\beta+\frac{\beta}{4}}^0)^-\oplus (V_{\mathbb{Z}\beta+\frac{\beta}{4}}^1)} \label{6.18}
    \end{equation}
    \begin{equation}
        V_{\mathbb{Z}\zeta+\frac{5\zeta}{8}\cong (V_{\mathbb{Z}\beta+\frac{\beta}{4}}^0)^-\oplus (V_{\mathbb{Z}\beta+\frac{\beta}{4}}^1)} \label{6.19}
    \end{equation}
    \begin{equation}
        V_{\mathbb{Z}\zeta+\frac{7\zeta}{8}\cong (V_{\mathbb{Z}\beta+\frac{\beta}{4}}^0)^+\oplus (V_{\mathbb{Z}\beta+\frac{\beta}{4}}^1)} \label{6.20}
    \end{equation}
\end{corollary}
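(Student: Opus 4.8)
The strategy is to first split each lattice module on the left into irreducible $V_{\mathbb{Z}\zeta}^{+}$-modules and then rewrite the summands using the preceding Proposition. This is legitimate because $V_{L_2}^{S_4}=(V_{\mathbb{Z}\beta}^{+})^{\langle\delta,\rho\rangle}\subseteq (V_{\mathbb{Z}\beta}^{+})^{\langle\rho\rangle}=V_{\mathbb{Z}\zeta}^{+}$, so any decomposition as $V_{\mathbb{Z}\zeta}^{+}$-modules is automatically one as $V_{L_2}^{S_4}$-modules. Throughout I use the labels $M^{0},\dots,M^{4}$ from the quantum-dimension table, so that $((V_{\mathbb{Z}\beta}^{+})^{0})^{+}=M^{0}$, $((V_{\mathbb{Z}\beta}^{+})^{0})^{-}=M^{1}$, $(V_{\mathbb{Z}\beta}^{+})^{1}=M^{2}$, $(V_{\mathbb{Z}\beta}^{-})^{+}=M^{3}$, $(V_{\mathbb{Z}\beta}^{-})^{-}=M^{4}$, with quantum dimensions $1,1,2,3,3$ respectively.

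For the $\theta$-fixed cosets the argument is direct. In the labelling $\tfrac{s}{32}\zeta$, the cosets $\mathbb{Z}\zeta$ and $\mathbb{Z}\zeta+\tfrac{\zeta}{2}$ correspond to $s=0$ and $s=16$, both satisfying $2\lambda\in\mathbb{Z}\zeta$, so $\theta$ preserves them and $V_{\mathbb{Z}\zeta}=V_{\mathbb{Z}\zeta}^{+}\oplus V_{\mathbb{Z}\zeta}^{-}$ and $V_{\mathbb{Z}\zeta+\frac{\zeta}{2}}=V_{\mathbb{Z}\zeta+\frac{\zeta}{2}}^{+}\oplus V_{\mathbb{Z}\zeta+\frac{\zeta}{2}}^{-}$ as $V_{\mathbb{Z}\zeta}^{+}$-modules. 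Substituting (\ref{6.1}),(\ref{6.3}) into the first and (\ref{6.2}),(\ref{6.4}) into the second gives (\ref{6.13}) and (\ref{6.15}). For the cosets $\tfrac{\zeta}{8},\tfrac{3\zeta}{8},\tfrac{5\zeta}{8},\tfrac{7\zeta}{8}$ (i.e. $s=4,12,20,28$) we have $2\lambda\notin\mathbb{Z}\zeta$, so each $V_{\mathbb{Z}\zeta+\lambda}$ is already irreducible over $V_{\mathbb{Z}\zeta}^{+}$, and the $\theta$-action $\lambda\mapsto-\lambda$ (i.e. $s\mapsto 32-s$) gives $V_{\mathbb{Z}\zeta+\frac{\zeta}{8}}\cong V_{\mathbb{Z}\zeta+\frac{7\zeta}{8}}$ and $V_{\mathbb{Z}\zeta+\frac{3\zeta}{8}}\cong V_{\mathbb{Z}\zeta+\frac{5\zeta}{8}}$. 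Since $\tfrac{\zeta}{8}=\tfrac{4}{32}\zeta$ and $\tfrac{3\zeta}{8}=\tfrac{12}{32}\zeta$, equations (\ref{6.9}) and (\ref{6.10}) read off directly as (\ref{6.17}) and (\ref{6.18}), and the two isomorphisms above then yield (\ref{6.20}) and (\ref{6.19}).

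The cosets $\tfrac{\zeta}{4}$ and $\tfrac{3\zeta}{4}$ ($s=8,24$) are the main point, because (\ref{1}) only identifies $V_{\mathbb{Z}\zeta+\frac{\zeta}{4}}$ as a $V_{\mathbb{Z}\zeta}^{+}$-module and does not exhibit its $V_{L_2}^{S_4}$-structure. Here I would invoke Schur--Weyl duality. Applying Theorem~3.1 to $G=S_4$ and the adjoint module $V_{L_2}$ gives $V_{L_2}=\bigoplus_{\lambda\in\mathrm{Irr}(S_4)}W_\lambda\otimes (V_{L_2})_\lambda$, where the five multiplicity spaces $(V_{L_2})_\lambda$ are pairwise inequivalent irreducible $V_{L_2}^{S_4}$-modules; these are exactly $M^{0},\dots,M^{4}$, the irreducible $V_{L_2}^{S_4}$-modules occurring in $V_{\mathbb{Z}\beta}\subseteq V_{L_2}$. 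The multiplicities $\dim W_\lambda$ form the multiset $\{1,1,2,3,3\}$ of $S_4$-irreducible dimensions, and by additivity of quantum dimension together with Theorem~5.6 ($\mathrm{qdim}_{V_{L_2}^{S_4}}V_{L_2}=|S_4|=24$) the multiplicity vector $(c_i)$ must satisfy $\sum_{i}c_i\,\mathrm{qdim}(M^{i})=24$. Since $(c_i)$ is a permutation of $(1,1,2,3,3)$ and $\mathrm{qdim}(M^{i})$ is exactly $(1,1,2,3,3)$, the rearrangement inequality forces $c_i=\mathrm{qdim}(M^{i})$, so that
\[
V_{L_2}=M^{0}\oplus M^{1}\oplus 2M^{2}\oplus 3M^{3}\oplus 3M^{4}.
\]

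Finally, since $\mathbb{Z}\zeta=\mathbb{Z}\,4\alpha$ has index $4$ in $L_2=\mathbb{Z}\alpha$, we also have
\[
V_{L_2}=V_{\mathbb{Z}\zeta}\oplus V_{\mathbb{Z}\zeta+\frac{\zeta}{4}}\oplus V_{\mathbb{Z}\zeta+\frac{\zeta}{2}}\oplus V_{\mathbb{Z}\zeta+\frac{3\zeta}{4}}
\]
as $V_{\mathbb{Z}\zeta}$-modules, hence as $V_{L_2}^{S_4}$-modules. Writing $X$ for the common module $V_{\mathbb{Z}\zeta+\frac{\zeta}{4}}\cong V_{\mathbb{Z}\zeta+\frac{3\zeta}{4}}$ and substituting the decompositions (\ref{6.13}) ($=M^{0}\oplus M^{2}\oplus M^{3}$) and (\ref{6.15}) ($=M^{1}\oplus M^{2}\oplus M^{4}$), the multiplicities of $M^{0},M^{1},M^{2}$ cancel and we are left with $2X=2M^{3}\oplus 2M^{4}$, so $X=M^{3}\oplus M^{4}=(V_{\mathbb{Z}\beta}^{-})^{+}\oplus (V_{\mathbb{Z}\beta}^{-})^{-}$, which is (\ref{6.14}) and (\ref{6.16}). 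The hard part is precisely this last step: one must be certain the five adjoint multiplicity spaces are exactly $M^{0},\dots,M^{4}$ and that their multiplicities in $V_{L_2}$ are their quantum dimensions; both follow from Theorem~3.1 combined with $\sum_\lambda(\dim W_\lambda)^2=24=\mathrm{qdim}_{V_{L_2}^{S_4}}V_{L_2}$. As an independent check, (\ref{6.14}) can also be confirmed by matching lowest conformal weights and the quantum dimension $6=3+3$ of $V_{\mathbb{Z}\zeta+\frac{\zeta}{4}}$ against the constituents $M^{3},M^{4}$.
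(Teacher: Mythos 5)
Your proof is correct. For (\ref{6.13}), (\ref{6.15}) and (\ref{6.17})--(\ref{6.20}) you do exactly what the paper intends: split into $\theta$-eigenspaces, use $V_{\mathbb{Z}\zeta+\lambda}\cong V_{\mathbb{Z}\zeta-\lambda}$ as $V_{\mathbb{Z}\zeta}^{+}$-modules, and substitute (\ref{6.1})--(\ref{6.4}), (\ref{6.9}), (\ref{6.10}) from Proposition 6.5. The genuine divergence is at (\ref{6.14}) and (\ref{6.16}): the paper's implicit route goes through (\ref{1}), which identifies $V_{\mathbb{Z}\zeta+\frac{\zeta}{4}}$ and $V_{\mathbb{Z}\zeta+\frac{3\zeta}{4}}$ with $V_{\mathbb{Z}\beta+\frac{\beta}{2}}^{\pm}$, combined with the identification of $V_{\mathbb{Z}\beta+\frac{\beta}{2}}^{\pm}$ with $V_{\mathbb{Z}\beta}^{-}$ as $V_{L_2}^{A_4}$-modules imported from \cite{DJ}, \cite{WZ}, which then splits under $\psi(\rho)$ as $(V_{\mathbb{Z}\beta}^{-})^{+}\oplus(V_{\mathbb{Z}\beta}^{-})^{-}$. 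You bypass (\ref{1}) and that external citation altogether: you pin down the $S_4$ Schur--Weyl multiplicities of the adjoint module from Theorem 3.1 and Theorem 5.6, using the equality case of the rearrangement (equivalently Cauchy--Schwarz) inequality to force each multiplicity to equal the corresponding quantum dimension, and then cancel the two known cosets in $V_{L_2}=V_{\mathbb{Z}\zeta}\oplus V_{\mathbb{Z}\zeta+\frac{\zeta}{4}}\oplus V_{\mathbb{Z}\zeta+\frac{\zeta}{2}}\oplus V_{\mathbb{Z}\zeta+\frac{3\zeta}{4}}$; rationality of $V_{L_2}^{S_4}$ makes the cancellation legitimate. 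The decomposition $V_{L_2}\cong M^{0}\oplus M^{1}\oplus 2M^{2}\oplus 3M^{3}\oplus 3M^{4}$ you establish along the way is exactly the paper's (\ref{6.31}) --- note that the paper derives (\ref{6.31}) \emph{from} this corollary, so your argument runs in the opposite direction, and it is not circular precisely because your multiplicities come from Theorems 3.1 and 5.6 rather than from the corollary. What each approach buys: the paper's version is a one-line consequence of Proposition 6.5 once one accepts the cited $A_4$-identification; yours is self-contained within the paper's stated general theorems and proves (\ref{6.31}) independently as a bonus. Two points you assert with only a word of justification deserve to be made explicit: first, that the cocycle $\alpha_{V_{L_2}}$ is trivial for the adjoint module (the $S_4$-action on $V_{L_2}$ is a genuine, not merely projective, representation), so Theorem 3.1 really yields five constituents with multiplicity multiset $\{1,1,2,3,3\}$; second, that these five constituents are $M^{0},\dots,M^{4}$ because these pairwise inequivalent irreducibles already occur in $V_{\mathbb{Z}\beta}\subseteq V_{L_2}$ and Theorem 3.1 allows only five inequivalent constituents in total. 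Both are true and easy, but they carry the weight of the argument.
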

(6.21) follows from lemma 5.1 \cite{DJ}. Applying the method in \cite{DJJJY}, let $\gamma = \sqrt{6}(x_1+x_2-x_3)$ and $\epsilon=2(x_2+x_3)$, we have this identification:
\begin{proposition}
  As $V_{L_2}^{S_4}$- modules:
    \begin{equation}
        V_{\mathbb{Z}\gamma \cong ((V_{\mathbb{Z}\beta}^+)^0)^+ \oplus ((V_{\mathbb{Z}\beta}^+)^0)^-\oplus (V_{\mathbb{Z}\beta}^-)^+\oplus (V_{\mathbb{Z}\beta}^-)^-} \label{6.21}
    \end{equation}
    \begin{equation}
        V_{\mathbb{Z}\gamma+\frac{\gamma}{3}}\cong (V_{\mathbb{Z}\beta}^+)^1\oplus (V_{\mathbb{Z}\beta}^-)^+\oplus (V_{\mathbb{Z}\beta}^-)^- \label{6.22}
    \end{equation}
    \begin{equation}
         V_{\mathbb{Z}\gamma+\frac{2\gamma}{3}}\cong (V_{\mathbb{Z}\beta}^+)^1\oplus (V_{\mathbb{Z}\beta}^-)^+\oplus (V_{\mathbb{Z}\beta}^-)^- \label{6.23}
    \end{equation}
    \begin{equation}
        V_{\mathbb{Z}\gamma+\frac{\gamma}{6}}\cong (V_{\mathbb{Z}\beta+\frac{\beta}{4}}^0)^+ \oplus (V_{\mathbb{Z}\beta+\frac{\beta}{4}}^0)^- \oplus (V_{\mathbb{Z}\beta+\frac{\beta}{4}}^1) \label{6.24}
    \end{equation}
    \begin{equation}
        V_{\mathbb{Z}\gamma+\frac{\gamma}{2}}\cong 2V_{\mathbb{Z}\beta+\frac{\beta}{4}}^1 \label{6.25}
    \end{equation}
    \begin{equation}
        V_{\mathbb{Z}\gamma+\frac{5\gamma}{6}}\cong (V_{\mathbb{Z}\beta+\frac{\beta}{4}}^0)^+ \oplus (V_{\mathbb{Z}\beta+\frac{\beta}{4}}^0)^- \oplus (V_{\mathbb{Z}\beta+\frac{\beta}{4}}^1) \label{6.26}
    \end{equation}
    \begin{equation}
        V_{\mathbb{Z}\epsilon}\cong ((V_{\mathbb{Z}\beta}^+)^0)^+ \oplus (V_{\mathbb{Z}\beta}^+)^1 \oplus (V_{\mathbb{Z}\beta}^-)^+\oplus 2(V_{\mathbb{Z}\beta}^-)^- \label{6.27}
    \end{equation}
    \begin{equation}
       V_{\mathbb{Z}\epsilon+\frac{\epsilon}{4}}\cong  (V_{\mathbb{Z}\beta+\frac{\beta}{4}}^0)^+ \oplus (V_{\mathbb{Z}\beta+\frac{\beta}{4}}^0)^- \oplus 2(V_{\mathbb{Z}\beta+\frac{\beta}{4}}^1) \label{6.28}
    \end{equation}
    \begin{equation}
        V_{\mathbb{Z}\epsilon+\frac{\epsilon}{2}}\cong ((V_{\mathbb{Z}\beta}^+)^0)^- \oplus (V_{\mathbb{Z}\beta}^+)^1 \oplus 2(V_{\mathbb{Z}\beta}^-)^+\oplus (V_{\mathbb{Z}\beta}^-)^- \label{6.29}
    \end{equation}
    \begin{equation}
        V_{\mathbb{Z}\epsilon+\frac{3\epsilon}{4}}\cong  (V_{\mathbb{Z}\beta+\frac{\beta}{4}}^0)^+ \oplus (V_{\mathbb{Z}\beta+\frac{\beta}{4}}^0)^- \oplus 2(V_{\mathbb{Z}\beta+\frac{\beta}{4}}^1) \label{6.30}
    \end{equation}
\end{proposition}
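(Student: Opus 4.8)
The plan is to realize $V_{\mathbb{Z}\gamma}$ and $V_{\mathbb{Z}\epsilon}$ as honest fixed-point subalgebras of $V_{L_2}$ and to decompose their modules by restriction, using conformal weights and quantum dimensions as the bookkeeping device. A direct computation shows that $\delta$ fixes $\gamma=\sqrt6(x_1+x_2-x_3)$ and that $\rho$ fixes $\epsilon=2(x_2+x_3)$; since $\delta$ has order $3$ with $(\gamma,\gamma)=18$ and $\rho$ has order $2$ with $(\epsilon,\epsilon)=8$, Remark~3.4 of \cite{DJJJY} identifies $V_{\mathbb{Z}\gamma}=V_{L_2}^{\langle\delta\rangle}$ and $V_{\mathbb{Z}\epsilon}=V_{L_2}^{\langle\rho\rangle}$ as actual subalgebras. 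Their irreducible modules are the cosets $V_{\mathbb{Z}\gamma+\frac{r}{18}\gamma}$ and $V_{\mathbb{Z}\epsilon+\frac{s}{8}\epsilon}$, and for the $\gamma$-series precisely those with $r\equiv0\pmod3$ fail to stay irreducible over $V_{L_2}^{S_4}$ (the others being the irreducible modules $M^{12},\dots,M^{17}$ of Theorem~6.3); these, together with the cosets recorded in (6.27)--(6.30), are exactly the modules to be decomposed.

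For the $\gamma$-series I would exploit $\langle\delta\rangle\subset A_4$, so that $V_{L_2}^{A_4}\subset V_{\mathbb{Z}\gamma}$ and each $V_{\mathbb{Z}\gamma+\frac{r}{18}\gamma}$ restricts to a $V_{L_2}^{A_4}$-module. The $A_4$-branching established in \cite{DJJJY} writes each of these as a sum of irreducible $V_{L_2}^{A_4}$-modules of the form $(V_{\mathbb{Z}\beta}^{\pm})^{j}$. To descend to $V_{L_2}^{S_4}=(V_{L_2}^{A_4})^{\langle\rho\rangle}$, one checks that $\rho(\gamma)=-\gamma$, so $\rho$ normalizes $\langle\delta\rangle$ (acting as $\delta\mapsto\delta^{-1}$) and $\psi(\rho)$ acts on every $A_4$-summand; by Theorem~3.1 its $\pm1$-eigenspaces are irreducible $V_{L_2}^{S_4}$-modules, producing the decorated pieces $((V_{\mathbb{Z}\beta}^+)^0)^{\pm}$ and $(V_{\mathbb{Z}\beta}^-)^{\pm}$, while a summand on which $\psi(\rho)$ interchanges two $A_4$-modules stays irreducible and appears undecorated (e.g.\ $(V_{\mathbb{Z}\beta}^+)^1$). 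For the $\epsilon$-series, $\rho$ fixes $\epsilon$, so $V_{L_2}^{S_4}\subset V_{\mathbb{Z}\epsilon}$ and the descent is direct rather than through $A_4$: realizing each $V_{\mathbb{Z}\epsilon+\frac{s}{8}\epsilon}$ as a $\psi(\rho)$-eigenspace inside an untwisted or $\rho$-twisted $V_{L_2}$-module and applying Theorem~3.1, the multiplicity of an irreducible $V_{L_2}^{S_4}$-module in it equals the dimension of the $\psi(\rho)$-fixed part of the multiplicity space supplied by the theorem, which is what produces the coefficient $2$ in (6.27)--(6.30).

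In both series the admissible summands are cut down by two numerical invariants. The lowest conformal weight of $V_{\mathbb{Z}\gamma+\frac{r}{18}\gamma}$ is $\frac{r^2}{36}$ and that of $V_{\mathbb{Z}\epsilon+\frac{s}{8}\epsilon}$ is $\frac{s^2}{16}$, to be matched against the weights of the irreducible $V_{L_2}^{S_4}$-modules recorded in \cite{WZ}. Simultaneously, multiplicativity of quantum dimension along the towers $V_{L_2}^{S_4}\subset V_{\mathbb{Z}\gamma}\subset V_{L_2}$ and $V_{L_2}^{S_4}\subset V_{\mathbb{Z}\epsilon}\subset V_{L_2}$, using Theorem~5.6 and the fact that the nontrivial $\delta$- and $\rho$-eigenspaces of $V_{L_2}$ are simple currents over $V_{\mathbb{Z}\gamma}$ and $V_{\mathbb{Z}\epsilon}$, forces $\operatorname{qdim}_{V_{L_2}^{S_4}}V_{\mathbb{Z}\gamma+\frac{r}{18}\gamma}=8$ and $\operatorname{qdim}_{V_{L_2}^{S_4}}V_{\mathbb{Z}\epsilon+\frac{s}{8}\epsilon}=12$. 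Each candidate right-hand side must then have summand quantum dimensions (read from Theorem~6.4) adding to $8$ or $12$, respectively.

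The main obstacle is the final sign-and-multiplicity bookkeeping: separating $((V_{\mathbb{Z}\beta}^+)^0)^+$ from $((V_{\mathbb{Z}\beta}^+)^0)^-$ and $(V_{\mathbb{Z}\beta}^-)^+$ from $(V_{\mathbb{Z}\beta}^-)^-$, and confirming the doubled summands. Because several candidate modules share the same lowest conformal weight, neither the weight nor the quantum-dimension total alone distinguishes them. I would resolve this by computing $\psi(\rho)$ explicitly on a lowest-weight vector of each coset---exactly as the proof of Proposition~6.5 treats $e^{\beta/8}$---using the intertwining relation $\psi(\rho)\,Y(v,z)\,\psi(\rho)^{-1}=Y(\rho v,z)$ and the defining action of $\rho$ on $e^{\pm\alpha}$, and then invoking the quantum-dimension total to eliminate every remaining distribution of summands. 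Carrying out this eigenvalue computation for the $\gamma$- and $\epsilon$-cosets is the principal computational effort; the rest is routine matching.
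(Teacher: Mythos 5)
Your strategy is sound and would prove the proposition, but it is organized differently from the paper's own proof, so a comparison is worthwhile. The paper obtains all ten identifications in one stroke by Schur--Weyl duality at the $S_4$ level: it writes $V_{L_2}$ and $V_{L_2+\frac{\alpha}{2}}$ as $\bigoplus_i M^i\otimes W_i^j$ over $V_{L_2}^{S_4}\otimes\mathbb{C}[GL(2,3)]$ (equations \eqref{6.31} and \eqref{6.32}), pins down each multiplicity space $W_i^j$ through its $P$-eigenvalues using Proposition 6.5 and Corollary 6.6, and then reads off \eqref{6.21}--\eqref{6.30} as the $\delta$- and $\rho$-eigenspace decompositions, each multiplicity being the dimension of the corresponding eigenspace inside $W_i^j$. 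Your treatment of the $\epsilon$-series is essentially this argument specialized to $\rho$ (multiplicity $=$ dimension of the relevant $\psi(\rho)$-eigenspace of the multiplicity space). Your treatment of the $\gamma$-series, however, is genuinely different: a two-step descent $V_{L_2}^{S_4}\subset V_{L_2}^{A_4}\subset V_{\mathbb{Z}\gamma}$ that takes the $A_4$-branchings of the cosets $V_{\mathbb{Z}\gamma+\frac{r}{18}\gamma}$ from \cite{DJJJY} as input and then applies the orbifold theory of Section 3 to $\langle\rho\rangle$: $\rho$-stable $A_4$-summands split into their two $\psi(\rho)$-eigenspaces, while summands interchanged under $\rho$-conjugation (e.g. $(V_{\mathbb{Z}\beta}^+)^1\leftrightarrow(V_{\mathbb{Z}\beta}^+)^2$) stay irreducible. (One phrasing should be tightened: since $\rho\delta\rho^{-1}=\delta^{-1}$, $\rho$ swaps the eigenspaces $V_{\mathbb{Z}\gamma+\frac{\gamma}{3}}\leftrightarrow V_{\mathbb{Z}\gamma+\frac{2\gamma}{3}}$, so $\psi(\rho)$ does not literally act on each copy inside a fixed coset; what you actually use is the intrinsic restriction of each abstract irreducible $V_{L_2}^{A_4}$-module to $V_{L_2}^{S_4}$, which is what Theorems 3.1--3.3 provide.) Your route buys reuse of \cite{DJJJY} and avoids the double-cover bookkeeping for the $\gamma$-cosets --- indeed the paper itself does exactly this for \eqref{6.21}, citing Lemma 5.1 of \cite{DJ} --- and it has the pleasant feature that no sign ambiguity ever arises there, because both $\psi(\rho)$-eigenspaces of every $\rho$-stable summand appear together; so your ``main obstacle'' really concerns only \eqref{6.27}--\eqref{6.30}. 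What the paper's route buys is precisely that last step: since the decorations $((V_{\mathbb{Z}\beta}^+)^0)^{\pm}$, $(V_{\mathbb{Z}\beta}^-)^{\pm}$ are \emph{defined} by the $\psi(\rho)$-normalization fixed in Proposition 6.5 and Corollary 6.6, deriving the $\rho$-eigenvalues of the $W_i^j$ from the $P$-eigenvalue data keeps the labels automatically consistent, whereas your plan to recompute $\psi(\rho)$ on lowest-weight vectors must be carried out in that same normalization --- otherwise \eqref{6.27} and \eqref{6.29} come out interchanged. With that consistency requirement made explicit, your argument goes through.
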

\begin{proof}
    Applying the method in \cite{DJJJY} and  Corollary~6.6, we have
\begin{equation}
        V_{L_2}\cong ((V_{\mathbb{Z}\beta}^+)^0)^+\otimes W_1^0 
        \oplus ((V_{\mathbb{Z}\beta}^+)^0)^-\otimes W_1^1
        \oplus (V_{\mathbb{Z}\beta}^+)^1\otimes W_2^0
        \oplus (V_{\mathbb{Z}\beta}^-)^+\otimes W_3^1
        \oplus (V_{\mathbb{Z}\beta}^-)^-\otimes W_3^0,
        \label{6.31}
\end{equation}
and
\begin{equation}
        V_{L_2+\frac{\alpha}{2}}\cong (V_{\mathbb{Z}\beta+\frac{\beta}{4}}^0)^-\otimes W_2^1
        \oplus (V_{\mathbb{Z}\beta+\frac{\beta}{4}}^0)^+\otimes W_2^2
        \oplus (V_{\mathbb{Z}\beta+\frac{\beta}{4}}^1)\otimes W_4.
        \label{6.32}
\end{equation}

Here $W_i^j$ denotes an irreducible representation of $GL(2,3)$, where $i$ indicates the dimension of the irreducible module and $j$ distinguishes inequivalent irreducible modules of the same dimension. Combining Corollary~6.6 with the eigenspace decomposition (with respect to the eigenvalues of the linear map $P$) in \cite{DJJJY}, we obtain that $W_1^0$ has eigenvalue $1$, $W_1^1$ has eigenvalue $-1$, $W_2^0$ has eigenvalues $\pm 1$, $W_2^1$ has eigenvalues $e^{\pm\frac{3\pi}{4}i}$, $W_2^2$ has eigenvalues $e^{\pm\frac{\pi}{4}i}$, $W_3^0$ has eigenvalues $-1,\pm i$, $W_3^1$ has eigenvalues $1,\pm i$, and $W_4$ has eigenvalues $e^{\pm\frac{\pi}{4}i},e^{\pm\frac{3\pi}{4}i}$. 

Therefore, calculating the eigenvalues of $W_i^j$ with respect to $\rho$ and $\delta$ yields~(\ref{6.21})--(\ref{6.30}).

\end{proof}
\begin{lemma}
    \begin{equation}
        V_{\mathbb{Z}\zeta}^{T_2,+}\cong V_{\mathbb{Z}\epsilon+\frac{\epsilon}{8}} \label{6.33}
    \end{equation}
    \begin{equation}
        V_{\mathbb{Z}\zeta}^{T_2,-}\cong V_{\mathbb{Z}\epsilon+\frac{3\epsilon}{8}} \label{6.34}
    \end{equation}
\end{lemma}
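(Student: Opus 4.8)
The plan is to prove both identifications by showing that $V_{\mathbb{Z}\epsilon+\frac{\epsilon}{8}}$ and $V_{\mathbb{Z}\epsilon+\frac{3\epsilon}{8}}$ are \emph{irreducible} $V_{L_2}^{S_4}$-modules of quantum dimension $12$, and then matching them against the two known irreducible modules $V_{\mathbb{Z}\zeta}^{T_2,+}=M^{26}$ and $V_{\mathbb{Z}\zeta}^{T_2,-}=M^{27}$ (already irreducible of quantum dimension $12$ by Theorem~6.4) using conformal weights. Since $(\epsilon,\epsilon)=8$, the lowest conformal weight of $V_{\mathbb{Z}\epsilon+\frac{\epsilon}{8}}$ is $\tfrac12(\tfrac{\epsilon}{8},\tfrac{\epsilon}{8})=\tfrac{1}{16}$ and that of $V_{\mathbb{Z}\epsilon+\frac{3\epsilon}{8}}$ is $\tfrac12(\tfrac{3\epsilon}{8},\tfrac{3\epsilon}{8})=\tfrac{9}{16}$. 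On the twisted side, the $\theta=\tau_2$-twisted module $V_{\mathbb{Z}\zeta}^{T_2}$ has bottom weight $\tfrac1{16}$, and splitting into $\theta$-eigenspaces gives bottoms at $\tfrac1{16}$ and $\tfrac1{16}+\tfrac12=\tfrac9{16}$; fixing the sign convention so that the lowest weight vector lies in the $+$-eigenspace assigns $V_{\mathbb{Z}\zeta}^{T_2,+}$ weight $\tfrac1{16}$ and $V_{\mathbb{Z}\zeta}^{T_2,-}$ weight $\tfrac9{16}$. Matching weights then forces the pairing $+\leftrightarrow\tfrac{\epsilon}{8}$, $-\leftrightarrow\tfrac{3\epsilon}{8}$ asserted in (\ref{6.33})--(\ref{6.34}).

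For the quantum dimension I would use multiplicativity of characters across the inclusion $V_{L_2}^{S_4}\subseteq V_{\mathbb{Z}\epsilon}$. This inclusion is legitimate because, exactly as in the identification $V_{L_2}^{\langle P\rangle}=V_{\mathbb{Z}\zeta}$, one checks $V_{\mathbb{Z}\epsilon}=V_{L_2}^{\langle s\rangle}$ where $s\in S_4$ is the order-two rotation about the $\epsilon$-axis (an edge axis, hence a transposition in $S_4$), so that $V_{L_2}^{S_4}\subseteq V_{L_2}^{\langle s\rangle}=V_{\mathbb{Z}\epsilon}$. Since the relevant characters multiply under this inclusion, $\mbox{qdim}_{V_{L_2}^{S_4}}V_{\mathbb{Z}\epsilon+\frac{\epsilon}{8}}=\mbox{qdim}_{V_{\mathbb{Z}\epsilon}}V_{\mathbb{Z}\epsilon+\frac{\epsilon}{8}}\cdot\mbox{qdim}_{V_{L_2}^{S_4}}V_{\mathbb{Z}\epsilon}$, where the first factor equals $1$ (a coset module over a rank-one lattice vertex operator algebra is a simple current) and the second is read off from (\ref{6.27}) as $1+2+3+2\cdot 3=12$. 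The identical computation for $V_{\mathbb{Z}\epsilon+\frac{3\epsilon}{8}}$ again yields $12$.

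The delicate step is irreducibility, and it cannot be extracted from any of the invariants above. Because $(\epsilon,\epsilon)=(\beta,\beta)=8$, the module $V_{\mathbb{Z}\epsilon+\frac{\epsilon}{8}}$ has exactly the same graded character as $V_{\mathbb{Z}\beta+\frac{\beta}{8}}$, and the latter is the \emph{reducible} module $M^{8}\oplus M^{9}$ by (\ref{6.11}) and Proposition~6.5; that sum also has quantum dimension $6+6=12$, lowest weight $\tfrac1{16}$, and a one-dimensional bottom. Thus character, quantum dimension, conformal weight, and lowest-weight-space dimension all fail to separate the irreducible $M^{26}$ from the reducible $M^{8}\oplus M^{9}$, and the only distinction is how $V_{L_2}^{S_4}$ acts, reflecting that $\epsilon=2(x_2+x_3)$ lies along an edge axis whereas $\beta$ lies along a face axis. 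To resolve this I would follow the eigenvalue method of Proposition~6.7 and \cite{DJJJY}: realize $V_{\mathbb{Z}\epsilon+\frac{\epsilon}{8}}$ inside the $s$-twisted $V_{L_2}$-module, decompose that module as $\bigoplus_k M^{k}\otimes W_k$ under $V_{L_2}^{S_4}\otimes\mathbb{C}^{\alpha}[G]$ (the twisted group algebra of Section~3, as in the $GL(2,3)$-analysis of Proposition~6.7), and then project onto the joint $\rho$- and $\delta$-eigenspaces cutting out the $\tfrac{\epsilon}{8}$-coset. Because this decomposition records the projective $S_4$-action and not merely the graded dimension, it separates the edge-axis coset from the face-axis coset and exhibits $V_{\mathbb{Z}\epsilon+\frac{\epsilon}{8}}$ as a single irreducible summand; the analogous projection handles $V_{\mathbb{Z}\epsilon+\frac{3\epsilon}{8}}$.

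I expect the group-theoretic bookkeeping in this twisted sector to be the principal obstacle: one must compute the projective action on the $s$-twisted module correctly, follow the $\rho$- and $\delta$-eigenvalues through the coset projection, and verify that the multiplicity space attached to $V_{\mathbb{Z}\epsilon+\frac{\epsilon}{8}}$ is one-dimensional, so that Theorem~3.1 delivers irreducibility (equivalently, that no nontrivial idempotent of $\mathbb{C}^{\alpha}[G_M]$ splits the space). Once irreducibility is established, the value $\mbox{qdim}=12$ forces the module to be one of $M^{26},M^{27}$, and the conformal-weight comparison ($\tfrac1{16}$ versus $\tfrac9{16}$) pins down the correspondence, completing (\ref{6.33})--(\ref{6.34}).
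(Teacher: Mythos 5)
Your overall skeleton is sound, and two of your three ingredients are correct: the conformal-weight computation ($\tfrac{1}{16}$ versus $\tfrac{9}{16}$ on both sides) and the quantum-dimension computation via multiplicativity across $V_{L_2}^{S_4}\subseteq V_{\mathbb{Z}\epsilon}=V_{L_2}^{\langle\rho\rangle}$ together with (\ref{6.27}) are fine, and your observation that $V_{\mathbb{Z}\epsilon+\frac{\epsilon}{8}}$ and $V_{\mathbb{Z}\beta+\frac{\beta}{8}}\cong M^{8}\oplus M^{9}$ have identical characters, quantum dimensions, lowest weights, and bottom-level dimensions is a genuinely good point: it shows correctly that irreducibility is the crux and cannot come from numerical invariants. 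Note, though, that your logical direction is inverted relative to the paper: you start from the coset modules and try to \emph{prove} they are irreducible over $V_{L_2}^{S_4}$, whereas the paper starts from the already-irreducible $M^{26},M^{27}$ and shows they carry a structure of irreducible $V_{\mathbb{Z}\epsilon}$-modules, so irreducibility never has to be proved for the coset side at all.

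The genuine gap is in your irreducibility mechanism, and as described it would fail. You propose to decompose the $\rho$-twisted $V_{L_2}$-module under the projective $S_4$-action and "project onto the joint $\rho$- and $\delta$-eigenspaces," following the $GL(2,3)$-analysis of Proposition~6.7. But that analysis applies only to \emph{untwisted} modules, which are stabilized by all of $S_4$; on a $\rho$-twisted module only the stabilizer $G_N\subseteq C_{S_4}(\rho)$ acts projectively, and $C_{S_4}(\rho)$ has order $4$ and contains no $3$-cycles, so there is no $\delta$-action and no such joint eigenspace projection. What is actually needed is a determination of $G_N$ and its $2$-cocycle: if $G_N=C_{S_4}(\rho)\cong K_4$ with trivial cocycle, each $\rho$-twisted module would split into four qdim-$6$ irreducibles and your coset modules would be \emph{reducible} over $V_{L_2}^{S_4}$, so this case must be excluded (e.g., by showing the double transposition commuting with $\rho$ swaps the two $\rho$-twisted modules, forcing $G_N=\langle\rho\rangle$, whence the two $\psi(\rho)$-eigenspaces are the inequivalent irreducible pieces and coincide with the $V_{\mathbb{Z}\epsilon}$-decomposition). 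The paper avoids this computation entirely by elimination: by \cite{WZ} the modules $M_{18}$--$M_{27}$ arise from the $\rho$- and $P$-twisted sectors; the $P$-sector produces exactly eight irreducibles (since $C_{S_4}(P)=\langle P\rangle\cong\mathbb{Z}_4$ has trivial Schur multiplier), which are matched with $M_{18}$--$M_{25}$ by quantum dimension; hence $M_{26},M_{27}$ must come from the $\rho$-sector, are therefore irreducible $V_{\mathbb{Z}\epsilon}$-modules by the DRX theorems applied to $G=\langle\rho\rangle$, and are then pinned down by conformal weight. Your proof is completable, but only after replacing the $\delta$-eigenvalue step by either the stabilizer computation or the paper's counting argument.
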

\begin{proof}
    By Theorems~3.1--3.3, we know that any irreducible $V_{L_2}^{S_4}$-module arises from a decomposition of irreducible $V_{L_2}$-modules, $\tau_1$-twisted $V_{L_2}$-modules, $\delta$-twisted $V_{L_2}$-modules, $\rho$-twisted $V_{L_2}$-modules, or $P$-twisted $V_{L_2}$-modules. By \cite{WZ}, the irreducible $V_{L_2}^{S_4}$-modules arising from $\rho$-twisted and $P$-twisted $V_{L_2}$-modules are $M_{18}$--$M_{27}$ (using the indexing in Theorem~6.4).

Both $V_{L_2}$ and $V_{\mathbb{Z}\alpha+\frac{\alpha}{2}}$ are inequivalent irreducible $V_{L_2}$-modules, and both are stabilized by $P$. By \cite{DLM2}, there are exactly two $P$-twisted $V_{L_2}$-modules with conformal weights $\frac{1}{64}$ and $\frac{9}{64}$. Thus, the stabilizer of the action defined in Section~3 is equal to the centralizer of $P$, namely
\[
C_G(P)=\langle P\rangle \cong \mathbb{Z}_4.
\]
The Schur multiplier of $\mathbb{Z}_4$ is trivial, so there are eight inequivalent irreducible $V_{L_2}^{S_4}$-modules obtained from decomposing those two $P$-twisted $V$-modules.

Applying Theorems~3.1 and~3.2 to $G=\langle P\rangle$, we see that these eight modules are also inequivalent irreducible $V_{L_2}^{\langle P\rangle}=V_{\mathbb{Z}\zeta}$-modules. These eight modules correspond exactly to $M_{18}$--$M_{25}$. Consequently, $M_{26}$ and $M_{27}$ arise from $\rho$-twisted $V_{L_2}$-modules. By applying a similar argument, we see that $M_{26}$ and $M_{27}$ are also inequivalent irreducible $V_{L_2}^{\langle \rho\rangle}\cong V_{\mathbb{Z}\epsilon}$-modules. Comparing conformal weights, we obtain~(\ref{6.33}) and~(\ref{6.34}).
 
\end{proof}

The identifications above are useful for calculating fusion rules for modules that are contained in untwisted irreducible $V_{L_2}$-modules. In order to determine fusion rules involving irreducible $V_{L_2}^{S_4}$-modules that arise from twisted $V_{L_2}$-modules, we need to compute the $S$-matrix of $V_{L_2}^{S_4}$. The main method follows Lemma~5.1 of \cite{DJJJY}; for completeness, we provide a proof here.

\begin{lemma}
    The entries of the S-matrix that involves irreducible twisted modules of $V_{L_2}^{S_4}$ are given in the Appendix.
\end{lemma}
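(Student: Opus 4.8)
The plan is to avoid computing any genuinely twisted trace function by first reducing every twisted irreducible $V_{L_2}^{S_4}$-module to an \emph{ordinary} module over a rank-one sublattice vertex operator algebra, whose character is a classical quotient of a theta function by the Dedekind $\eta$-function, and then reading off the $S$-matrix entries from the modular transformation of such characters, following the method of Lemma~5.1 of \cite{DJJJY}. The modules arising from $P$-twisted $V_{L_2}$-modules, namely $M^{18},\dots,M^{25}$, were identified in the proof of Lemma~6.8 with the lattice modules $V_{\mathbb{Z}\zeta+\frac{s}{32}\zeta}$ over $V_{L_2}^{\langle P\rangle}=V_{\mathbb{Z}\zeta}$, and the modules arising from $\rho$-twisted $V_{L_2}$-modules, $M^{26}=V_{\mathbb{Z}\zeta}^{T_2,+}$ and $M^{27}=V_{\mathbb{Z}\zeta}^{T_2,-}$, were identified in Lemma~6.8 with $V_{\mathbb{Z}\epsilon+\frac{\epsilon}{8}}$ and $V_{\mathbb{Z}\epsilon+\frac{3\epsilon}{8}}$ over $V_{L_2}^{\langle\rho\rangle}=V_{\mathbb{Z}\epsilon}$. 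Hence each twisted character is already a theta/eta quotient whose $\tau\mapsto-1/\tau$ behaviour is known.

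First I would write each irreducible character $\mathrm{ch}_{M^i}(\tau)=Z_{M^i}(\mathbf 1,\tau)$ as an explicit $\mathbb{Q}$-linear combination of rank-one lattice characters. Because $V_{L_2}^{S_4}$ shares its Virasoro element with every sublattice algebra $V_{\mathbb{Z}\beta}$, $V_{\mathbb{Z}\gamma}$, $V_{\mathbb{Z}\epsilon}$, $V_{\mathbb{Z}\zeta}$, the branching identities of Proposition~6.5, Corollary~6.6, Proposition~6.7 and Lemma~6.8 become additive relations among characters: each such identity expresses a lattice character as a sum of the $\mathrm{ch}_{M^i}$. Collecting enough of these relations yields a linear system that I invert to obtain every $\mathrm{ch}_{M^i}$, and in particular the twisted ones, as a combination of theta functions divided by $\eta$.

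Next I would apply the $S$-transformation. Using $\eta(-1/\tau)=\sqrt{-i\tau}\,\eta(\tau)$ and the Gauss-sum transformation $\theta_\mu(-1/\tau)=c\sum_\nu e^{-2\pi i(\mu,\nu)}\theta_\nu(\tau)$ for the relevant discriminant forms, I transform the explicit expression for each $\mathrm{ch}_{M^i}$ and re-expand the result in the basis of $V_{L_2}^{S_4}$-characters found in the previous step. Matching coefficients against the modular-invariance relation $Z_{M^i}(\mathbf 1,-1/\tau)=\sum_j S_{i,j}Z_{M^j}(\mathbf 1,\tau)$ of Section~3 then extracts the entries $S_{i,j}$ for which $i$ or $j$ lies in the twisted range $18,\dots,27$. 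As independent checks I would verify that the resulting first row obeys $S_{i,0}/S_{0,0}=\operatorname{qdim}_{V_{L_2}^{S_4}}M^i$ with $S_{0,0}=1/\sqrt{\mathrm{glob}(V_{L_2}^{S_4})}$ and $\mathrm{glob}(V_{L_2}^{S_4})=|S_4|^2\,\mathrm{glob}(V_{L_2})=1152$, and that the computed block is symmetric and compatible with $S^2=(\delta_{i,j'})$ from the Verlinde theorem.

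The hard part will be the bookkeeping across the four rank-one lattices, which sit at mutually different norms and therefore carry theta functions of different levels and discriminant forms: the twisted modules appear only inside the $V_{\mathbb{Z}\epsilon}$- and $V_{\mathbb{Z}\zeta}$-decompositions, so I must keep careful track of which lattice realizes which module and ensure that the re-expansion after the $S$-transformation closes on the fixed list of $28$ irreducibles. The most delicate point is matching the $\pm$-eigenspaces of $\psi(\rho)$ and $\psi(P)$ to the correct theta pieces, since it is these sign data that separate modules sharing the same underlying lattice coset; once that dictionary is fixed the remaining work is routine manipulation of theta-function identities, and the resulting entries are tabulated in the Appendix.
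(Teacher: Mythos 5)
Your proposal is, in substance, the paper's own method (both following Lemma~5.1 of [DJJJY]): identify each twisted irreducible with an ordinary lattice-coset module over a rank-one lattice VOA containing $V_{L_2}^{S_4}$ with the same Virasoro element, apply the classical theta-function $S$-transformation from Serre, re-expand the transformed lattice characters into irreducible $V_{L_2}^{S_4}$-characters via the branching identities of Section~6, and match coefficients; your normalization check $S_{0,0}=1/\sqrt{\mathrm{glob}(V_{L_2}^{S_4})}=\frac{1}{24\sqrt{2}}$ agrees with the paper's $S_{0,0}=\frac{1}{6\sqrt{32}}$ obtained from $S_{j,0}=\mathrm{qdim}(M^j)\,S_{0,0}$. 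Your extra step of inverting the branching system to express \emph{every} $\mathrm{ch}_{M^i}$ in theta/eta form is harmless but not needed for this lemma: a twisted row is already a single theta/eta quotient, only the forward branching is needed after the transformation, and the mixed entries (untwisted row, twisted column) then follow from the symmetry of $S$.

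There is, however, a scope gap. The lemma covers all entries involving $M^{8}$--$M^{27}$, not only $M^{18}$--$M^{27}$: the modules $M^{12}$--$M^{17}$ are the cosets $V_{\mathbb{Z}\gamma+\frac{r\gamma}{18}}$ arising from $\delta$-twisted $V_{L_2}$-modules, and $M^{8}$--$M^{11}$ are $(V_{\mathbb{Z}\beta+\frac{\beta}{8}})^{\pm}$, $(V_{\mathbb{Z}\beta+\frac{3\beta}{8}})^{\pm}$, which by (6.5)--(6.8) are again $\zeta$-coset modules. So your assertion that the twisted modules appear only inside the $V_{\mathbb{Z}\epsilon}$- and $V_{\mathbb{Z}\zeta}$-decompositions, and your extraction of entries only for indices in the range $18,\dots,27$, leave out rows $8$--$17$ of the Appendix table, which the lemma also claims. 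The fix costs nothing: the identical transform-and-rebranch computation applied to the $\gamma$-cosets (for rows $12$--$17$, using (6.21)--(6.26) to re-expand the cosets with $r\equiv 0 \bmod 3$) and to the $\zeta$-cosets $V_{\frac{2}{32}\zeta+\mathbb{Z}\zeta}$, etc.\ (for rows $8$--$11$) produces these entries, which is exactly how the paper disposes of them with the remark that ``the same method applies.''
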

\begin{proof}
  We give a proof for $M^{18}-M^{25}$; the other cases are similar. 
Consider irreducible $V_{\mathbb{Z}\zeta}$-modules. They are of the form
\[
V_{\mathbb{Z}\zeta+\lambda_k}, \qquad \lambda_k=\frac{k\zeta}{32}, \quad k=0,\dots,31.
\]
By \cite[p.~106]{S}, we have
\begin{equation}
Z_{V_{\mathbb{Z}\zeta+\lambda_k}}\!\left(-\frac{1}{\tau}\right)
=\sum_{j=0}^{31}\frac{1}{\sqrt{32}}
e^{-2\pi i\langle \lambda_j,\lambda_k\rangle}
Z_{V_{\mathbb{Z}\zeta+\lambda_j}}(\tau).
\label{6.35}
\end{equation}
Thus,
\[
S_{\lambda_j,\lambda_k}
=\frac{1}{\sqrt{32}}e^{-2\pi i\langle \lambda_j,\lambda_k\rangle},
\]
where $S_{\lambda_j,\lambda_k}$ denotes the $(\lambda_j,\lambda_k)$-entry of the $S$-matrix of $V_{\mathbb{Z}\zeta}$.

Let $(S_{j,k})$ denote the $S$-matrix of $V_{L_2}^{S_4}$. For $j=18,\dots,25$, we have
\[
S_{j,0}=\frac{1}{\sqrt{32}}
=\mbox{qdim} M^j\cdot S_{0,0}
=6S_{0,0}.
\]
Therefore,
\[
S_{0,0}=\frac{1}{6\sqrt{32}},
\]
and hence
\[
S_{j,0}=\frac{\mbox{qdim} M^j}{6\sqrt{32}}
\qquad \text{for all } j=0,\dots,27.
\]

Fix $M^j\cong V_{\mathbb{Z}\zeta+\lambda_l}$ for some $l\in\{0,\dots,31\}$. 
Let $M^k$ be an irreducible $V_{L_2}^{S_4}$-module. 
If $M^k$ does not appear as a submodule of any $V_{\mathbb{Z}\zeta+\lambda_s}$, $s=0,\dots,31$, then
\[
S_{j,k}=0.
\]
Otherwise, suppose $M^k$ appears in
\[
V_{\mathbb{Z}\zeta+\lambda_{k_1}},\dots,V_{\mathbb{Z}\zeta+\lambda_{k_r}}
\]
with multiplicities $s_1,\dots,s_r$, respectively. Then
\[
S_{j,k}=\sum_{i=1}^{r}s_i\,S_{\lambda_l,\lambda_{k_i}}.
\]

The same method applies to compute the $S$-matrix entries involving
$M^{12}-M^{17}$ and $M^{26},M^{27}$.

\end{proof}
By proposition 6.4, $V_{\mathbb{Z}\beta+\frac{\beta}{8}}^{\pm}$ and $V_{\mathbb{Z}\beta+\frac{3\beta}{8}}^{\pm}$ can be also computed in this way.  The next lemma calculate the S matrix entries that relate to $(V_{\mathbb Z\beta}^{-})^{\pm}$.
\begin{lemma}
   \begin{eqnarray}
       Z_{(V_{\mathbb{Z}\beta}^-)^+}(-\frac{1}{\tau}) &= &\frac{1}{2\sqrt{32}}Z_{((V_{\mathbb{Z}\beta}^+)^0)^+ }(\tau) + \frac{1}{2\sqrt{32}}Z_{((V_{\mathbb{Z}\beta}^+)^0)^- }(\tau) + \frac{1}{\sqrt{32}}Z_{(V_{\mathbb{Z}\beta}^+)^1 }(\tau) + \frac{3}{2\sqrt{32}}Z_{(V_{\mathbb{Z}\beta}^-)^+ }(\tau) \nonumber \\
       & &+ \frac{3}{2\sqrt{32}}Z_{(V_{\mathbb{Z}\beta}^-)^- }(\tau) 
        + \frac{1}{\sqrt{32}}Z_{(V_{\mathbb{Z}\beta+\frac{\beta}{4}}^0)^+ }(\tau) + \frac{1}{\sqrt{32}}Z_{(V_{\mathbb{Z}\beta+\frac{\beta}{4}}^0)^- }(\tau)+\frac{2}{\sqrt{32}}Z_{(V_{\mathbb{Z}\beta+\frac{\beta}{4}}^1) }(\tau)\nonumber \\
        & &- \frac{1}{\sqrt{32}}Z_{(V_{\mathbb{Z}\beta+\frac{\beta}{8}})^+ }(\tau)  
        - \frac{1}{\sqrt{32}}Z_{(V_{\mathbb{Z}\beta+\frac{\beta}{8}})^- }(\tau) - \frac{1}{\sqrt{32}}Z_{(V_{\mathbb{Z}\beta+\frac{3\beta}{8}})^+ }(\tau) - \frac{1}{\sqrt{32}}Z_{(V_{\mathbb{Z}\beta+\frac{3\beta}{8}})^- }(\tau) \nonumber \\
       & & + \frac{1}{\sqrt{32}} \sum_{s=1,s\neq 0 \mbox{ mod }2}^{15} Z_{V_{\mathbb{Z}\zeta}+\frac{s\zeta}{32}}(\tau) - \frac{2}{\sqrt{32}}Z_{V_{{\mathbb{Z}\zeta}}^{T_2,+}}(\tau) - \frac{2}{\sqrt{32}}Z_{V_{{\mathbb{Z}\zeta}}^{T_2,-}}(\tau).\label{6.36}
   \end{eqnarray}
   \begin{eqnarray}
       Z_{(V_{\mathbb{Z}\beta}^-)^-}(-\frac{1}{\tau}) &= &\frac{1}{2\sqrt{32}}Z_{((V_{\mathbb{Z}\beta}^+)^0)^+ }(\tau) + \frac{1}{2\sqrt{32}}Z_{((V_{\mathbb{Z}\beta}^+)^0)^- }(\tau) + \frac{1}{\sqrt{32}}Z_{(V_{\mathbb{Z}\beta}^+)^1 }(\tau) + \frac{3}{2\sqrt{32}}Z_{(V_{\mathbb{Z}\beta}^-)^+ }(\tau) \nonumber \\
       & &+ \frac{3}{2\sqrt{32}}Z_{(V_{\mathbb{Z}\beta}^-)^- }(\tau) +\frac{1}{\sqrt{32}}Z_{(V_{\mathbb{Z}\beta+\frac{\beta}{4}}^0)^+ }(\tau) + \frac{1}{\sqrt{32}}Z_{(V_{\mathbb{Z}\beta+\frac{\beta}{4}}^0)^- }(\tau)+\frac{2}{\sqrt{32}}Z_{(V_{\mathbb{Z}\beta+\frac{\beta}{4}}^1) }(\tau)\nonumber\\
       & &
        - \frac{1}{\sqrt{32}}Z_{(V_{\mathbb{Z}\beta+\frac{\beta}{8}})^+ }(\tau)  
        - \frac{1}{\sqrt{32}}Z_{(V_{\mathbb{Z}\beta+\frac{\beta}{8}})^- }(\tau) - \frac{1}{\sqrt{32}}Z_{(V_{\mathbb{Z}\beta+\frac{3\beta}{8}})^+ }(\tau)\nonumber \\
        & &- \frac{1}{\sqrt{32}}Z_{(V_{\mathbb{Z}\beta+\frac{3\beta}{8}})^- }(\tau) 
        - \frac{1}{\sqrt{32}} \sum_{s=1,s\neq 0 \mbox{ mod }2}^{15} Z_{V_{\mathbb{Z}\zeta}+\frac{s\zeta}{32}}(\tau) + \frac{2}{\sqrt{32}}Z_{V_{{\mathbb{Z}\zeta}}^{T_2,+}}(\tau)\nonumber \\
        & &+ \frac{2}{\sqrt{32}}Z_{V_{{\mathbb{Z}\zeta}}^{T_2,-}}(\tau).
   \end{eqnarray}\label{6.37}
\end{lemma}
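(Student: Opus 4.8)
The plan is to reduce both transformations to the modular theory of the rank-one lattice vertex operator algebra $V_{\mathbb{Z}\zeta}$ together with its $\langle\theta\rangle$-orbifold, and then to re-expand the answer in the basis of $V_{L_2}^{S_4}$-characters $\{Z_{M^i}\}$. The entry point is \eqref{6.3} and \eqref{6.4}, which identify $(V_{\mathbb{Z}\beta}^-)^+\cong V_{\mathbb{Z}\zeta}^-$ and $(V_{\mathbb{Z}\beta}^-)^-\cong V_{\mathbb{Z}\zeta+\frac12\zeta}^-$ as $V_{L_2}^{S_4}$-modules, where the superscript $-$ denotes the odd eigenspace of the lattice involution $\theta=\tau_2$. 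First I would record the character identities
\[
Z_{(V_{\mathbb{Z}\beta}^-)^\pm}(\tau)=\tfrac12\bigl(Z_{N^\pm}(\tau)-Z^{\theta}_{N^\pm}(\tau)\bigr),\qquad N^+=V_{\mathbb{Z}\zeta},\ N^-=V_{\mathbb{Z}\zeta+\tfrac12\zeta},
\]
where $Z^{\theta}_{N}(\tau)=\operatorname{tr}_N\theta\,q^{L(0)-c/24}$ is the $\theta$-twined trace. This splits each $S$-transformation into an untwined part, controlled directly by the lattice formula \eqref{6.35}, and a twined part, controlled by the $\mathbb{Z}_2$-orbifold of $V_{\mathbb{Z}\zeta}$.

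For the untwined part, \eqref{6.35} gives
\[
Z_{V_{\mathbb{Z}\zeta}}\!\left(-\tfrac1\tau\right)=\tfrac{1}{\sqrt{32}}\sum_{j=0}^{31}Z_{V_{\mathbb{Z}\zeta+\lambda_j}}(\tau),\qquad Z_{V_{\mathbb{Z}\zeta+\frac12\zeta}}\!\left(-\tfrac1\tau\right)=\tfrac{1}{\sqrt{32}}\sum_{j=0}^{31}(-1)^{j}Z_{V_{\mathbb{Z}\zeta+\lambda_j}}(\tau),
\]
the phase $(-1)^j=e^{-2\pi i\langle\lambda_j,\lambda_{16}\rangle}$ being exactly the mechanism that flips the signs of the odd-index terms between the two formulas. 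For the twined part I would use the standard modular behaviour of the rank-one $\theta$-orbifold: under $\tau\mapsto-1/\tau$ the twined trace $Z^{\theta}_{V_{\mathbb{Z}\zeta+\lambda_k}}$ is carried into the $\theta$-twisted sector, namely into the characters of the two irreducible $\theta$-twisted modules $V_{\mathbb{Z}\zeta}^{T_1}$ and $V_{\mathbb{Z}\zeta}^{T_2}$ and their $\theta$-eigenspaces. Equivalently, since $V_{\mathbb{Z}\zeta}^-$ and $V_{\mathbb{Z}\zeta+\frac12\zeta}^-$ are themselves irreducible $V_{\mathbb{Z}\zeta}^+$-modules, one may simply quote the $S$-matrix of the rank-one orbifold $V_{\mathbb{Z}\zeta}^+$. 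Either way the output expresses each left-hand side as a combination, with denominator $\sqrt{32}$, of the full list of irreducible $V_{\mathbb{Z}\zeta}^+$-characters: the untwisted $V_{\mathbb{Z}\zeta+\lambda_j}$, the eigenspaces $V_{\mathbb{Z}\zeta}^{\pm}$ and $V_{\mathbb{Z}\zeta+\frac12\zeta}^{\pm}$, and the twisted $V_{\mathbb{Z}\zeta}^{T_1,\pm}$, $V_{\mathbb{Z}\zeta}^{T_2,\pm}$.

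The last step is to rewrite every $V_{\mathbb{Z}\zeta}^+$-character on the right in the $V_{L_2}^{S_4}$-basis, which is precisely what the earlier identifications supply. The even-index $V_{\mathbb{Z}\zeta+\lambda_j}$ decompose through Corollary~6.6, Proposition~6.5 and the quantum-dimension table of Theorem~6.4; the odd-index ones are the irreducibles $M^{18}$--$M^{25}$ (each appearing with $V_{\mathbb{Z}\zeta+\lambda_j}\cong V_{\mathbb{Z}\zeta+\lambda_{32-j}}$, hence with multiplicity two in the full $j$-sum); the twisted $V_{\mathbb{Z}\zeta}^{T_1,\pm}$ decompose by \eqref{6.11}--\eqref{6.12} into $V_{\mathbb{Z}\beta+\frac{\beta}{8}}^{\pm}$ and $V_{\mathbb{Z}\beta+\frac{3\beta}{8}}^{\pm}$, while $V_{\mathbb{Z}\zeta}^{T_2,\pm}=M^{26},M^{27}$ by \eqref{6.33}--\eqref{6.34}. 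Substituting and collecting coefficients produces \eqref{6.36} and the companion formula for $(V_{\mathbb{Z}\beta}^-)^-$. As a check, the coefficient of the vacuum character $Z_{M^0}$ must equal $S_{3,0}=\operatorname{qdim}(M^3)\,S_{0,0}=3S_{0,0}=\tfrac{1}{2\sqrt{32}}$, which is indeed the leading term of \eqref{6.36}.

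The main obstacle is the twined/twisted part. One has to fix the exact modular data of the $\theta$-orbifold of $V_{\mathbb{Z}\zeta}$ --- in particular the coefficients with which the two twisted sectors and their $\pm$ eigenspaces enter --- and then keep the bookkeeping consistent when those twisted characters are re-expanded into the $M^i$. Two features make this delicate: the half-integral coset $\lambda_{16}=\tfrac12\zeta$, whose $\theta$-eigenspaces and twisted-sector phases differ from those of a generic $\lambda_j$; and the fact that the shift by $\tfrac12\zeta$ acts oppositely on the $T_1$ and $T_2$ sectors, which is exactly why the $T_2$-terms change sign between \eqref{6.36} and the formula for $(V_{\mathbb{Z}\beta}^-)^-$ while the $T_1$-derived terms do not. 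Because the resulting coefficients are asymmetric --- $-1/\sqrt{32}$ for the $T_1$-derived modules against $-2/\sqrt{32}$ for $T_2$ --- any slip in tracking which twisted eigenspace maps to which $M^i$ would be visible there, so I would cross-check the final row against the global-dimension identity $\operatorname{glob}(V_{L_2}^{S_4})=|S_4|^2\operatorname{glob}(V_{L_2})$ of Theorem~5.7 and against the constraints imposed by the Verlinde formula (Theorem~4.3).
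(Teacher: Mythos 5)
Your plan is correct, and its skeleton coincides with the paper's: identify $(V_{\mathbb{Z}\beta}^-)^+\cong V_{\mathbb{Z}\zeta}^-$ and $(V_{\mathbb{Z}\beta}^-)^-\cong V_{\mathbb{Z}\zeta+\frac{1}{2}\zeta}^-$ via \eqref{6.3}--\eqref{6.4}, transform using the lattice formula \eqref{6.35}, and re-expand every $V_{\mathbb{Z}\zeta}^+$-character in the $M^i$-basis through Proposition~6.5, Corollary~6.6 and \eqref{6.11}--\eqref{6.12}. The two implementation differences are worth recording. First, where you compute the odd part as $\frac{1}{2}\bigl(Z_N-Z_N^{\theta}\bigr)$, and therefore need the modular behaviour of $\theta$-twined theta functions (equivalently a full non-vacuum row of the orbifold $S$-matrix quoted from the literature), the paper writes $Z_{V_{\mathbb{Z}\zeta}^-}=Z_{V_{\mathbb{Z}\zeta}}-Z_{V_{\mathbb{Z}\zeta}^+}$ and gets $Z_{V_{\mathbb{Z}\zeta}^+}(-1/\tau)$ from quantum dimensions alone: since $V_{\mathbb{Z}\zeta}^+$ is the vacuum module of the orbifold, only the vacuum row $S_{0,j}=\operatorname{qdim}(M_j)\,S_{0,0}$ is needed, with $S_{0,0}$ pinned by consistency with \eqref{6.35} (the technique of Lemma~6.9, using the quantum-dimension results of Section~5); no twisted-sector modular data is ever invoked explicitly. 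Second, for the companion formula the paper never transforms $V_{\mathbb{Z}\zeta+\frac{1}{2}\zeta}$ at all: it reruns the argument on the lattice $\mathbb{Z}\beta$ (normalization $\sqrt{8}$), expands $Z_{V_{\mathbb{Z}\beta}^-}(-1/\tau)$ in the $M^i$-basis, and subtracts \eqref{6.36} using $Z_{V_{\mathbb{Z}\beta}^-}=Z_{(V_{\mathbb{Z}\beta}^-)^+}+Z_{(V_{\mathbb{Z}\beta}^-)^-}$. That subtraction produces automatically the sign flip on the $V_{\mathbb{Z}\zeta}^{T_2,\pm}$-terms and the $-\tfrac{1}{\sqrt{32}}$ coefficients on the $T_1$-derived modules $V_{\mathbb{Z}\beta+\frac{\beta}{8}}^{\pm},V_{\mathbb{Z}\beta+\frac{3\beta}{8}}^{\pm}$ --- exactly the $T_1$ versus $T_2$ phase bookkeeping that you correctly single out as the delicate point of your direct route through the half-shifted coset. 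So your route needs one extra classical input (how the $\frac{1}{2}\zeta$-shift distributes phases between the two twisted sectors), while the paper's double-counting trick inherits it for free; in exchange your route stays entirely inside the $\mathbb{Z}\zeta$-picture and makes the $(-1)^j$ mechanism explicit. Your consistency check $S_{3,0}=\operatorname{qdim}(M^3)S_{0,0}=\tfrac{1}{2\sqrt{32}}$ agrees with the appendix table, so the endpoint matches as well.
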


\begin{proof}
    Let us start with proof of (\ref{6.36}). By theorem 6.4, we have $(V_{\mathbb{Z}\beta}^-)^+\cong V_{\mathbb{Z}\zeta}^-$ as $V_{L_2}^{S_4}$ module. So, 
    \begin{eqnarray}
        Z_{V_{\mathbb{Z}\zeta}}(-\frac{1}{\tau})= Z_{V_{\mathbb{Z}\zeta}^+} (-\frac{1}{\tau})+ Z_{V_{\mathbb{Z}\zeta}^-}(-\frac{1}{\tau})=\sum_{j=0}^{31}\frac{1}{\sqrt{32}} Z_{V_{\mathbb{Z}\zeta+\frac{j}{32}\zeta}} (\tau).
    \end{eqnarray}
    By theorem 5.6, theorem 5.7, we can calculate quantum dimension of $V_{\mathbb{Z}\zeta}^+$ modules. $V_{\mathbb{Z}\zeta}^{\pm},V_{\mathbb{\zeta}+\frac{\zeta}{2}}^{\pm}$ have quantum dimensions 1, $V_{\mathbb{Z}\zeta+\frac{s\zeta}{32}}$ have quantum dimensions 2 for $(s=1,...,15)$ and $V_{\mathbb{Z}\zeta}^{T_1,\pm},V_{\mathbb{Z}\zeta}^{T_2,\pm}$ have quantum  dimensions 4. Apply technique in previous lemma, we know $S_{0,0}=\frac{1}{2\sqrt{32}}$. Thus, we have 
    \begin{eqnarray}
        Z_{V_{\mathbb{Z}\zeta}^+}(-\frac{1}{\tau})&=&\frac{1}{2\sqrt{32}}Z_{V_{\mathbb{Z}\zeta}^+}(\tau)+\frac{1}{2\sqrt{32}}Z_{V_{\mathbb{Z}\zeta}^-}(\tau)+\frac{1}{2\sqrt{32}}Z_{V_{\mathbb{Z}\zeta+\frac{\zeta}{2}}^+}(\tau)+\frac{1}{2\sqrt{32}}Z_{V_{\mathbb{Z}\zeta+\frac{\zeta}{2}}^-}(\tau)\nonumber\\
        & &+ \sum_{j=1}^{15}\frac{1}{\sqrt{32}} Z_{V_{\mathbb{Z}\zeta+\frac{j}{32}\zeta}} (\tau)+\frac{2}{\sqrt{32}}Z_{V_{\mathbb{Z}\zeta}^{T_1,+}}(\tau)+\frac{2}{\sqrt{32}}Z_{V_{\mathbb{Z}\zeta}^{T_1,-}}(\tau)+\frac{2}{\sqrt{32}}Z_{V_{\mathbb{Z}\zeta}^{T_2,+}}(\tau)\nonumber \\
        & &+\frac{2}{\sqrt{32}}Z_{V_{\mathbb{Z}\zeta}^{T_2,-}}(\tau).
    \end{eqnarray}
    Then, we have 
    \begin{eqnarray}
        Z_{V_{\mathbb{Z}\zeta}^-}(-\frac{1}{\tau})&=&\frac{1}{2\sqrt{32}}Z_{V_{\mathbb{Z}\zeta}^+}(\tau)+\frac{1}{2\sqrt{32}}Z_{V_{\mathbb{Z}\zeta}^-}(\tau)+\frac{1}{2\sqrt{32}}Z_{V_{\mathbb{Z}\zeta+\frac{\zeta}{2}}^+}(\tau)+\frac{1}{2\sqrt{32}}Z_{V_{\mathbb{Z}\zeta+\frac{\zeta}{2}}^-}(\tau)\nonumber\\
        & &+ \sum_{j=1}^{15}\frac{1}{\sqrt{32}} Z_{V_{\mathbb{Z}\zeta+\frac{j}{32}\zeta}} (\tau)-\frac{2}{\sqrt{32}}Z_{V_{\mathbb{Z}\zeta}^{T_1,+}}(\tau)-\frac{2}{\sqrt{32}}Z_{V_{\mathbb{Z}\zeta}^{T_1,-}}(\tau)-\frac{2}{\sqrt{32}}Z_{V_{\mathbb{Z}\zeta}^{T_2,+}}(\tau)\nonumber \\
        & &-\frac{2}{\sqrt{32}}Z_{V_{\mathbb{Z}\zeta}^{T_2,-}}(\tau).
    \end{eqnarray}
    Combine with proposition 6.4, we get (\ref{6.36}).Prove (6.44) is similar. We can compute the quantum dimensions of $V_{\mathbb{Z}\beta}^+$ modules. $V_{\mathbb{Z}\beta}^{\pm},V_{\mathbb{Z}\beta+\frac{\beta}{2}}^{\pm}$ have quantum dimensions 1, and $V_{\mathbb{Z}\beta+\frac{s\beta}{8}},V_{\mathbb{Z\beta}}^{T_1,\pm},V_{\mathbb{Z}\beta}^{T_2,\pm}$ have quantum dimensions 2. We have 
    \begin{equation}
        Z_{V_{\mathbb{Z}\beta}}(-\frac{1}{\tau})=Z_{V_{\mathbb{Z}\beta}^+}(-\frac{1}{\tau})+Z_{V_{\mathbb{Z}\beta}^-}(-\frac{1}{\tau})=\sum_{i=0}^{7}\frac{1}{\sqrt{8}}Z_{V_{\mathbb{Z}\beta+\frac{i\beta}{8}}}(\tau)
    \end{equation}
     and \begin{eqnarray}
         Z_{V_{\mathbb{Z}\beta}^+}(-\frac{1}{\tau})&=& \frac{1}{2\sqrt{8}}(Z_{V_{\mathbb{Z}\beta}^+}(\tau)+Z_{V_{\mathbb{Z}\beta}^-}(\tau)+Z_{V_{\mathbb{Z}\beta+\frac{\beta}{2}}^+}(\tau)+Z_{V_{\mathbb{Z}\beta+\frac{\beta}{2}}^-}(\tau))+\frac{1}{\sqrt{8}}\sum_{i=1}^{3}Z_{V_{\mathbb{Z}\beta}+\frac{i}{8}\beta}(\tau)\nonumber\\
         & &+\frac{1}{\sqrt{8}}Z_{V_{\mathbb{Z}\beta}^{T_1,+}}(\tau)+\frac{1}{\sqrt{8}}Z_{V_{\mathbb{Z}\beta}^{T_1,-}}(\tau)+\frac{1}{\sqrt{8}}Z_{V_{\mathbb{Z}\beta}^{T_2,+}}(\tau)+\frac{1}{\sqrt{8}}Z_{V_{\mathbb{Z}\beta}^{T_2,-}}(\tau).
     \end{eqnarray}
     Thus, \begin{eqnarray}
         Z_{V_{\mathbb{Z}\beta}^-}(-\frac{1}{\tau})&=& \frac{1}{2\sqrt{8}}(Z_{V_{\mathbb{Z}\beta}^+}(\tau)+Z_{V_{\mathbb{Z}\beta}^-}(\tau)+Z_{V_{\mathbb{Z}\beta+\frac{\beta}{2}}^+}(\tau)+Z_{V_{\mathbb{Z}\beta+\frac{\beta}{2}}^-}(\tau))+\frac{1}{\sqrt{8}}\sum_{i=1}^{3}Z_{V_{\mathbb{Z}\beta}+\frac{i}{8}\beta}(\tau)\nonumber\\
         & &-\frac{1}{\sqrt{8}}Z_{V_{\mathbb{Z}\beta}^{T_1,+}}(\tau)-\frac{1}{\sqrt{8}}Z_{V_{\mathbb{Z}\beta}^{T_1,-}}(\tau)-\frac{1}{\sqrt{8}}Z_{V_{\mathbb{Z}\beta}^{T_2,+}}(\tau)-\frac{1}{\sqrt{8}}Z_{V_{\mathbb{Z}\beta}^{T_2,-}}(\tau).
     \end{eqnarray}
     By Proposition 6.4, we have 
     \begin{eqnarray}
         Z_{V_{\mathbb{Z}\beta}^-}(-\frac{1}{\tau})&=&\frac{1}{2\sqrt{8}}(Z_{{((V_{\mathbb{Z}\beta}^+})^0)^+}(\tau)+Z_{{((V_{\mathbb{Z}\beta}^+})^0)^-}(\tau)+2Z_{{(V_{\mathbb{Z}\beta}^+})^1}(\tau))+\frac{3}{2\sqrt{8}}(Z_{(V_{\mathbb{Z}\beta}^-)^+}(\tau)+Z_{(V_{\mathbb{Z}\beta}^-)^-}(\tau))\nonumber\\
         & &+\frac{1}{\sqrt{8}}(Z_{(V_{\mathbb{Z}\beta+\frac{\beta}{4}}^0)^+ }(\tau) +Z_{(V_{\mathbb{Z}\beta+\frac{\beta}{4}}^0)^- }(\tau) +2Z_{V_{\mathbb{Z}\beta+\frac{\beta}{4}}^1 }(\tau) )\nonumber\\
         & &-\frac{1}{\sqrt{8}}(Z_{(V_{\mathbb{Z}\beta}+\frac{\beta}{8})^+}(\tau)+Z_{(V_{\mathbb{Z}\beta}+\frac{\beta}{8})^-}(\tau)+Z_{(V_{\mathbb{Z}\beta}+\frac{3\beta}{8})^+}(\tau)+Z_{(V_{\mathbb{Z}\beta}+\frac{3\beta}{8})^-}(\tau))
     \end{eqnarray}
     Subtract (\ref{6.36}), we can get (6.38).
\end{proof}
\section{fusion rules for $V_{L_2}^{S_4}$}
\setcounter{equation}{0}
\numberwithin{equation} {section}
  In this section, we compute the full fusion rules for $V_{L_2}^{S_4}$-modules. The index of each module follow from Theorem~6.4. Throughout this section, we use $0$ to denote $+$ and $1$ to denote $-$ when appropriate. For example, $(V_{\mathbb{Z}\beta}^-)^0$ denotes $(V_{\mathbb{Z}\beta}^-)^+$, and $(V_{\mathbb{Z}\beta}^-)^1$ denotes $(V_{\mathbb{Z}\beta}^-)^-$.

\begin{theorem}
    \begin{equation}
        (1). (V_{\mathbb{Z}\beta+\frac{i\beta}{8}}^j)\boxtimes(V_{\mathbb{Z}\gamma+\frac{r}{18}\gamma})=\bigoplus _{r}V_{\mathbb{Z}\gamma+\frac{r\gamma
        }{18}}.(i=1,3,j=0,1,1\leq r\leq 8,r\neq 0 \mbox{ mod }3.)\label{7.1}
    \end{equation}
    \begin{eqnarray}
     (2). & &  M_8\boxtimes M_{18}=M_{18}\oplus M_{19}\oplus V_{\mathbb{Z}\zeta}^{T_2,+}\oplus V_{\mathbb{Z}\zeta}^{T_2,-},
        M_8\boxtimes M_{19}=M_{18}\oplus M_{20}\oplus V_{\mathbb{Z}\zeta}^{T_2,+}\oplus V_{\mathbb{Z}\zeta}^{T_2,-},\nonumber\\
       & & M_8\boxtimes M_{20}=M_{19}\oplus M_{21}\oplus V_{\mathbb{Z}\zeta}^{T_2,+}\oplus V_{\mathbb{Z}\zeta}^{T_2,-},
       M_8\boxtimes M_{21}=M_{20}\oplus M_{22}\oplus V_{\mathbb{Z}\zeta}^{T_2,+}\oplus V_{\mathbb{Z}\zeta}^{T_2,-}\nonumber\\
       & & M_8\boxtimes M_{22}=M_{21}\oplus M_{23}\oplus V_{\mathbb{Z}\zeta}^{T_2,+}\oplus V_{\mathbb{Z}\zeta}^{T_2,-},
       M_8\boxtimes M_{23}=M_{22}\oplus M_{24}\oplus V_{\mathbb{Z}\zeta}^{T_2,+}\oplus V_{\mathbb{Z}\zeta}^{T_2,-}\nonumber\\
       & &M_8\boxtimes M_{24}=M_{23}\oplus M_{25}\oplus V_{\mathbb{Z}\zeta}^{T_2,+}\oplus V_{\mathbb{Z}\zeta}^{T_2,-},
       M_8\boxtimes M_{25}=M_{24}\oplus M_{25}\oplus V_{\mathbb{Z}\zeta}^{T_2,+}\oplus V_{\mathbb{Z}\zeta}^{T_2,-}\nonumber\\       %
       & & M_9\boxtimes M_{18}=M_{24}\oplus M_{25}\oplus V_{\mathbb{Z}\zeta}^{T_2,+}\oplus V_{\mathbb{Z}\zeta}^{T_2,-},
       M_9\boxtimes M_{19}=M_{23}\oplus M_{25}\oplus V_{\mathbb{Z}\zeta}^{T_2,+}\oplus V_{\mathbb{Z}\zeta}^{T_2,-}\nonumber\\
       & & M_9\boxtimes M_{20}=M_{22}\oplus M_{24}\oplus V_{\mathbb{Z}\zeta}^{T_2,+}\oplus V_{\mathbb{Z}\zeta}^{T_2,-},
       M_9\boxtimes M_{21}=M_{21}\oplus M_{23}\oplus V_{\mathbb{Z}\zeta}^{T_2,+}\oplus V_{\mathbb{Z}\zeta}^{T_2,-}\nonumber\\
       & &M_9\boxtimes M_{22}=M_{20}\oplus M_{22}\oplus V_{\mathbb{Z}\zeta}^{T_2,+}\oplus V_{\mathbb{Z}\zeta}^{T_2,-},
       M_9\boxtimes M_{23}=M_{19}\oplus M_{21}\oplus V_{\mathbb{Z}\zeta}^{T_2,+}\oplus V_{\mathbb{Z}\zeta}^{T_2,-}\nonumber\\
       & &M_9\boxtimes M_{24}=M_{18}\oplus M_{20}\oplus V_{\mathbb{Z}\zeta}^{T_2,+}\oplus V_{\mathbb{Z}\zeta}^{T_2,-},
       M_9\boxtimes M_{25}=M_{18}\oplus M_{19}\oplus V_{\mathbb{Z}\zeta}^{T_2,+}\oplus V_{\mathbb{Z}\zeta}^{T_2,-}\nonumber\\     %
       & & M_{10}\boxtimes M_{18}=M_{20}\oplus M_{21}\oplus V_{\mathbb{Z}\zeta}^{T_2,+}\oplus V_{\mathbb{Z}\zeta}^{T_2,-},
       M_{10}\boxtimes M_{19}=M_{19}\oplus M_{22}\oplus V_{\mathbb{Z}\zeta}^{T_2,+}\oplus V_{\mathbb{Z}\zeta}^{T_2,-}\nonumber\\
       & &M_{10}\boxtimes M_{20}=M_{18}\oplus M_{23}\oplus V_{\mathbb{Z}\zeta}^{T_2,+}\oplus V_{\mathbb{Z}\zeta}^{T_2,-},
       M_{10}\boxtimes M_{21}=M_{18}\oplus M_{24}\oplus V_{\mathbb{Z}\zeta}^{T_2,+}\oplus V_{\mathbb{Z}\zeta}^{T_2,-}\nonumber\\
       & & M_{10}\boxtimes M_{22}=M_{19}\oplus M_{25}\oplus V_{\mathbb{Z}\zeta}^{T_2,+}\oplus V_{\mathbb{Z}\zeta}^{T_2,-},
       M_{10}\boxtimes M_{23}=M_{20}\oplus M_{25}\oplus V_{\mathbb{Z}\zeta}^{T_2,+}\oplus V_{\mathbb{Z}\zeta}^{T_2,-}\nonumber\\
       & & M_{10}\boxtimes M_{24}=M_{21}\oplus M_{24}\oplus V_{\mathbb{Z}\zeta}^{T_2,+}\oplus V_{\mathbb{Z}\zeta}^{T_2,-},
       M_{10}\boxtimes M_{25}=M_{22}\oplus M_{23}\oplus V_{\mathbb{Z}\zeta}^{T_2,+}\oplus V_{\mathbb{Z}\zeta}^{T_2,-}\nonumber\\    %
       & & M_{11}\boxtimes M_{18}=M_{22}\oplus M_{23}\oplus V_{\mathbb{Z}\zeta}^{T_2,+}\oplus V_{\mathbb{Z}\zeta}^{T_2,-},
       M_{11}\boxtimes M_{19}=M_{21}\oplus M_{24}\oplus V_{\mathbb{Z}\zeta}^{T_2,+}\oplus V_{\mathbb{Z}\zeta}^{T_2,-}\nonumber\\
       & &M_{11}\boxtimes M_{20}=M_{20}\oplus M_{25}\oplus V_{\mathbb{Z}\zeta}^{T_2,+}\oplus V_{\mathbb{Z}\zeta}^{T_2,-},
       M_{11}\boxtimes M_{21}=M_{19}\oplus M_{25}\oplus V_{\mathbb{Z}\zeta}^{T_2,+}\oplus V_{\mathbb{Z}\zeta}^{T_2,-}\nonumber\\
       & & M_{11}\boxtimes M_{22}=M_{18}\oplus M_{24}\oplus V_{\mathbb{Z}\zeta}^{T_2,+}\oplus V_{\mathbb{Z}\zeta}^{T_2,-},
       M_{11}\boxtimes M_{23}=M_{18}\oplus M_{23}\oplus V_{\mathbb{Z}\zeta}^{T_2,+}\oplus V_{\mathbb{Z}\zeta}^{T_2,-}\nonumber\\
       & &M_{11}\boxtimes M_{24}=M_{19}\oplus M_{22}\oplus V_{\mathbb{Z}\zeta}^{T_2,+}\oplus V_{\mathbb{Z}\zeta}^{T_2,-},
       M_{11}\boxtimes M_{25}=M_{20}\oplus M_{21}\oplus V_{\mathbb{Z}\zeta}^{T_2,+}\oplus V_{\mathbb{Z}\zeta}^{T_2,-}\nonumber\\
       & & \label{7.2}
    \end{eqnarray}
    \begin{equation}
        (3) .(V_{\mathbb{Z}\beta+\frac{i\beta}{8}}^j)\boxtimes V_{\mathbb{Z}\zeta }^{T_2,k}=(\bigoplus_{s=1,s\neq 0\mbox{ mod }2}^{15} V_{\frac{s\zeta}{32}+\mathbb{Z}\zeta})\oplus V_{\mathbb{Z}\zeta}^{T_2,+} \oplus V_{\mathbb{Z}\zeta}^{T_2,-}.(i=1,3, j=0,1, k=0,1) \label{7.3}
    \end{equation}
    (4).\quad For the fusion rules of type $M_i\boxtimes M_j$ with $12\leq i \leq 17,18\leq j \leq 25$.
    \begin{align}
& M_{12}\boxtimes M_{18}=M_{12}\boxtimes M_{21}=M_{12}\boxtimes M_{22}
 =M_{12}\boxtimes M_{25}=M_{13}\boxtimes M_{19} \nonumber\\
& =M_{13}\boxtimes M_{20}=M_{13}\boxtimes M_{23}
 =M_{13}\boxtimes M_{24}=M_{14}\boxtimes M_{19} \nonumber\\
& =M_{14}\boxtimes M_{20}=M_{14}\boxtimes M_{23}
 =M_{14}\boxtimes M_{24}=M_{15}\boxtimes M_{18} \nonumber\\
& =M_{15}\boxtimes M_{21}
 =M_{15}\boxtimes M_{22}= M_{15}\boxtimes M_{25}
 =M_{16}\boxtimes M_{18} \nonumber\\
& =M_{16}\boxtimes M_{21}=M_{16}\boxtimes M_{22}
 =M_{16}\boxtimes M_{25}=M_{17}\boxtimes M_{19} \nonumber\\
& =M_{17}\boxtimes M_{21}=M_{17}\boxtimes M_{22}
 =M_{17}\boxtimes M_{25} \nonumber\\
& =M_{18}\oplus M_{21}\oplus M_{22}\oplus M_{25}
 \oplus M_{26}\oplus M_{27}.
\label{7.4}
\end{align}
    All other products of this type are equal to: $M_{19}\oplus M_{20}\oplus M_{23}\oplus M_{24}\oplus M_{26}\oplus M_{27}$. 
    \begin{equation}\label{7.5}
\begin{aligned}
(5)\quad
V_{\mathbb{Z}\gamma+\frac{r\gamma}{18}}
\boxtimes V_{\mathbb{Z}\zeta}^{T_2,j}
&=
\bigoplus_{\substack{1\leq s\leq 15\\
                     s\not\equiv 0\pmod{2}}}
V_{\mathbb{Z}\zeta+\frac{s\zeta}{32}}
\\
&\quad\oplus 2\,V_{\mathbb{Z}\zeta}^{T_2,+}
      \oplus 2\,V_{\mathbb{Z}\zeta}^{T_2,-},
\\[-2pt]
&\qquad
\left(
1\leq r\leq 8,\quad
r\not\equiv 0\pmod{3},\quad
j=0,1
\right).
\end{aligned}
\end{equation}
    \begin{equation}\label{7.6}
\begin{aligned}
(6)\quad
V_{\mathbb{Z}\zeta+\frac{s\zeta}{32}}
\boxtimes V_{\mathbb{Z}\zeta}^{T_2,j}
&=
\bigoplus_{\substack{1\leq r\leq 8\\
                     r\not\equiv 0\pmod{3}}}
V_{\mathbb{Z}\gamma+\frac{r\gamma}{18}}
\\
&\quad\oplus
\bigoplus_{\substack{a=1,3\\ t=0,1}}
V_{\frac{a\beta}{8}+\mathbb{Z}\beta}^{t},
\\[-2pt]
&\qquad
\left(
1\leq s\leq 15,\quad
s\not\equiv 0\pmod{2}
\right).
\end{aligned}
\end{equation}
    \begin{eqnarray}
        (7).(V_{\mathbb{Z}\beta}^{-})^i\boxtimes V_{\mathbb{Z}\zeta}^{T_2,j}=V_{\mathbb{Z}\zeta}^{T_2,+}\oplus 2V_{\mathbb{Z}\zeta}^{T_2,-} \mbox{ for } i=j,(i,j=0,1). \label{7.7}
    \end{eqnarray}
    \begin{eqnarray}
        (8).(V_{\mathbb{Z}\beta}^{-})^i\boxtimes V_{\mathbb{Z}\zeta}^{T_2,j}=2V_{\mathbb{Z}\zeta}^{T_2,+}\oplus V_{\mathbb{Z}\zeta}^{T_2,-} \mbox{ for } i\neq j,(i,j=0,1).\label{7.8}
    \end{eqnarray}
    
\end{theorem}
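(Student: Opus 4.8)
The plan is to derive all of (1)--(8) from the Verlinde formula (Theorem~\ref{Verlinde formula}), which is available because $V_{L_2}^{S_4}$ is simple, rational, $C_2$-cofinite, of CFT type, and self-dual (\cite{WZ}), with its $28$ irreducible modules listed in Theorem~6.3. Concretely I would compute
\[
N_{i,j}^{k}=\sum_{s=0}^{27}\frac{S_{i,s}\,S_{j,s}\,(S^{-1})_{k,s}}{S_{0,s}}
\]
for each relevant pair $(i,j)$, feeding in the $S$-matrix entries produced in Lemmas~6.9 and~6.10 and recorded in the Appendix. Throughout the computation I would carry the identity $\operatorname{qdim}(M_i\boxtimes M_j)=\operatorname{qdim}M_i\cdot\operatorname{qdim}M_j$ of Theorem~5.2 as a running consistency check: with the values of Theorem~6.4, the quantum dimensions of the summands listed on the right of each formula must total the product on the left (for example $6\cdot6=36=6+6+12+12$ in~(2) and $3\cdot12=36=12+2\cdot12$ in~(7)). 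This is my principal guard against an omitted or spurious summand.

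Two features cut down the work and supply independent checks. First, fusion with the simple current $M_1$ (quantum dimension $1$ by Theorem~5.4) is a permutation of the module set, which is read off by matching conformal weights. Second, the identifications of Propositions~6.5 and~6.7, Corollary~6.6, and Lemma~6.8 realize the modules $M_0,\dots,M_{17}$ as summands of modules over the intermediate algebras $V_{\mathbb{Z}\beta}^{+}$, $V_{\mathbb{Z}\gamma}$, $V_{\mathbb{Z}\zeta}$, and $V_{\mathbb{Z}\epsilon}$, whose fusion rules are known; these identifications are exactly what make the twisted $S$-matrix entries computable, and they also let me verify individual products by restricting along the inclusions $V_{L_2}^{S_4}\subseteq V_{L_2}^{A_4}\subseteq V_{L_2}$ and comparing, through the intertwining-space inequality of Theorem~4.4, with the known $V_{L_2}^{A_4}$-fusion rules of \cite{DJJJY}.

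The genuine work is in parts~(2)--(8), all of which involve the modules $M_{18}$--$M_{27}$ coming from the $\rho$-twisted and $P$-twisted $V_{L_2}$-modules (part~(1), by contrast, stays within the $\tau_1$- and $\delta$-twisted sectors $M_8$--$M_{17}$, which are governed by the more accessible realizations over $V_{\mathbb{Z}\beta}^{+}$ and $V_{\mathbb{Z}\gamma}$). The step I expect to be the main obstacle is the exact bookkeeping of the branching multiplicities behind the twisted entries: a single irreducible $M^{k}$ typically embeds with multiplicities $s_1,\dots,s_r$ into several of the lattice modules $V_{\mathbb{Z}\zeta+\lambda_{s}}$, so each $S$-matrix entry has to be assembled as $S_{j,k}=\sum_{i}s_i\,S_{\lambda_l,\lambda_{k_i}}$ as in Lemma~6.9, with the entries attached to $(V_{\mathbb{Z}\beta}^{-})^{\pm}=M_3,M_4$ coming instead from Lemma~6.10. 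One must then show that the resulting rational expressions collapse to non-negative integers; this is precisely what forces the appearance of $M_{26},M_{27}$ in~(2)--(4) and the asymmetric multiplicities ($1$ versus $2$) that separate (7) from (8) and produce the doubled terms in~(5). Verifying each coefficient against the quantum-dimension balance above then certifies that every fusion product is complete.
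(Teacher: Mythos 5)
Your proposal takes essentially the same route as the paper: the paper's proof of this theorem consists precisely of applying the Verlinde formula (Theorem~4.3) with the twisted-sector $S$-matrix entries computed in Lemmas~6.9 and~6.10 (tabulated in the Appendix), which is exactly the computation you outline. The quantum-dimension balance and the Theorem~4.4 cross-checks you add are reasonable safeguards but do not constitute a different argument.
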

  \begin{proof}
      This directly follows from theorem 4.3, lemma 6.9, lemma 6.10.
  \end{proof}
  Here are some conventions we need to take for the next theorem. Let $i,j\in \mathbb{Z}$, then $\overline {i+j}$ denotes $i+j$ mod 2 and $\overline{\overline{i+j}}$ denotes $i+j$ mod 3. By \cite{DJJJY} and \cite{WZ}, we have following identification as $V_{L_2}^{S_4}$ modules:
  \begin{equation}
      W_{\delta,1}^{0}\cong V_{\mathbb{Z}\gamma+\frac{\gamma}{18}}\label{7.9}
  \end{equation}
  \begin{equation}
      W_{\delta,2}^{0}\cong V_{\mathbb{Z}\gamma+\frac{2\gamma}{18}}\label{7.10}
  \end{equation}
  \begin{equation}
      W_{\delta,2}^{1}\cong V_{\mathbb{Z}\gamma+\frac{4\gamma}{18}}\label{7.11}
  \end{equation}
  \begin{equation}
      W_{\delta,1}^{1}\cong V_{\mathbb{Z}\gamma+\frac{5\gamma}{18}}\label{7.12}
  \end{equation}
  \begin{equation}
      W_{\delta,1}^{2}\cong V_{\mathbb{Z}\gamma+\frac{7\gamma}{18}}\label{7.13}
  \end{equation}
  \begin{equation}
      W_{\delta,2}^{2}\cong V_{\mathbb{Z}\gamma+\frac{8\gamma}{18}}\label{7.14}
  \end{equation}
  \begin{theorem}
      \begin{equation}
          (1).((V_{\mathbb{Z}\beta}^+)^0)^i\boxtimes ((V_{\mathbb{Z}\beta}^+)^0)^j = ((V_{\mathbb{Z}\beta}^+)^0)^{\overline{i+j}}.(i,j=0,1)\label {7.15}
      \end{equation}
      \begin{equation}
          (2).((V_{\mathbb{Z}\beta}^+)^0)^i\boxtimes (V_{\mathbb{Z}\beta}^+)^1 = (V_{\mathbb{Z}\beta}^+)^1. (i=0,1)\label{7.16}
      \end{equation}
      \begin{equation}
          (3).((V_{\mathbb{Z}\beta}^+)^0)^i\boxtimes (V_{\mathbb{Z}\beta}^-)^j=(V_{\mathbb{Z}\beta}^-)^{\overline{i+j}}.\label{7.17}(i,j=0,1)
      \end{equation}
      \begin{equation}
          (4).((V_{\mathbb{Z}\beta}^+)^0)^i\boxtimes (V_{\mathbb{Z}\beta+\frac{\beta}{4}}^0)^j=(V_{\mathbb{Z}\beta+\frac{\beta}{4}}^0)^{\overline{i+j}}.(i,j=0,1)\label{7.18}
      \end{equation}
      \begin{equation}
          (5).((V_{\mathbb{Z}\beta}^+)^0)^i \boxtimes V_{\mathbb{Z}\beta+\frac{\beta}{4}}^1 = V_{\mathbb{Z}\beta+\frac{\beta}{4}}^1. (i=0,1)\label{7.19}
      \end{equation}
      \begin{equation}
          (6).((V_{\mathbb{Z}\beta}^+)^0)^i\boxtimes (V_{\mathbb{Z}\beta+\frac{j}{8}\beta})^k=(V_{\mathbb{Z}\beta+\frac{j}{8}\beta})^{\overline{i+k}}.(i,k=0,1, j=1,3)\label{7.20}
      \end{equation}
      \begin{equation}
          (7).((V_{\mathbb{Z}\beta}^+)^0)^i\boxtimes V_{\mathbb{Z}\gamma+\frac{r\gamma}{18}}=V_{\mathbb{Z}\gamma+\frac{r\gamma}{18}}.(i=0,1)\label{7.21}
      \end{equation}
      \begin{equation}
          (8).((V_{\mathbb{Z}\beta}^+)^0)^i\boxtimes V_{\mathbb{Z}\zeta+\frac{s\zeta}{32}}= V_{\mathbb{Z}\zeta+\frac{s\zeta}{32}}.(i=0)\label{7.22}
      \end{equation}
      \begin{equation}
          (9).((V_{\mathbb{Z}\beta}^+)^0)^i\boxtimes V_{\mathbb{Z}\zeta+\frac{s\zeta}{32}}= V_{\mathbb{Z}\zeta+\frac{(16-s)\zeta}{32}}.(i=1)\label{7.23}
      \end{equation}
      \begin{eqnarray}
          (10).((V_{\mathbb{Z}\beta}^+)^0)^i\boxtimes V_{\mathbb{Z}\zeta}^{T_2,j}=V_{\mathbb{Z}\zeta}^{T_2,\overline{i+j}}.(i,j=0,1)\label{7.24}
      \end{eqnarray}

      \begin{equation}
          (11). (V_{\mathbb{Z}\beta}^+)^1\boxtimes (V_{\mathbb{Z}\beta}^+)^1=((V_{\mathbb{Z}\beta}^+)^0)^+\oplus ((V_{\mathbb{Z}\beta}^+)^0)^-\oplus (V_{\mathbb{Z}\beta}^+)^1. \label {7.25}
      \end{equation}
      \begin{equation}
          (12).(V_{\mathbb{Z}\beta}^+)^1 \boxtimes (V_{\mathbb{Z}\beta}^-)^j=(V_{\mathbb{Z}\beta}^-)^+\oplus (V_{\mathbb{Z}\beta}^-)^-.\label{7.26}(j=0,1)
      \end{equation}
      \begin{equation}
          (13).(V_{\mathbb{Z}\beta}^+)^1\boxtimes (V_{\mathbb{Z}\beta+\frac{\beta}{4}}^0)^i=V_{\mathbb{Z}\beta+\frac{\beta}{4}}^1.\label{7.27}(i=0,1)
      \end{equation}
      \begin{equation}
          (14).(V_{\mathbb{Z}\beta}^+)^1\boxtimes V_{\mathbb{Z}\beta+\frac{\beta}{4}}^1=(V_{\mathbb{Z}\beta+\frac{\beta}{4}}^0)^+\oplus (V_{\mathbb{Z}\beta+\frac{\beta}{4}}^0)^-\oplus V_{\mathbb{Z}\beta+\frac{\beta}{4}}^1.\label{7.27a}
      \end{equation}
      \begin{equation}
          (15).(V_{\mathbb{Z}\beta}^+)^1\boxtimes (V_{\mathbb{Z}\beta+\frac{j\beta}{8}})^k=(V_{\mathbb{Z}\beta+\frac{j\beta}{8}})^+\oplus (V_{\mathbb{Z}\beta+\frac{j\beta}{8}})^-.\label{7.28}(j=1,3,k=0,1)
      \end{equation}
      \begin{equation}
          (16).(V_{\mathbb{Z}\beta}^+)^1\boxtimes V_{\mathbb{Z}\gamma+\frac{\gamma}{18}}=V_{\mathbb{Z}\gamma+\frac{5\gamma}{18}}\oplus V_{\mathbb{Z}\gamma+\frac{7\gamma}{18}}.\label{7.29}
      \end{equation}
      \begin{equation}
          (V_{\mathbb{Z}\beta}^+)^1\boxtimes V_{\mathbb{Z}\gamma+\frac{2\gamma}{18}}=V_{\mathbb{Z}\gamma+\frac{4\gamma}{18}}\oplus V_{\mathbb{Z}\gamma+\frac{8\gamma}{18}}.\label{7.30}
      \end{equation}
      \begin{equation}
          (V_{\mathbb{Z}\beta}^+)^1\boxtimes V_{\mathbb{Z}\gamma+\frac{4\gamma}{18}}=V_{\mathbb{Z}\gamma+\frac{2\gamma}{18}}\oplus V_{\mathbb{Z}\gamma+\frac{8\gamma}{18}}.\label{7.31}
      \end{equation}
      \begin{equation}
          (V_{\mathbb{Z}\beta}^+)^1\boxtimes V_{\mathbb{Z}\gamma+\frac{5\gamma}{18}}=V_{\mathbb{Z}\gamma+\frac{\gamma}{18}}\oplus V_{\mathbb{Z}\gamma+\frac{7\gamma}{18}}.\label{7.32}
      \end{equation}
      \begin{equation}
          (V_{\mathbb{Z}\beta}^+)^1\boxtimes V_{\mathbb{Z}\gamma+\frac{7\gamma}{18}}=V_{\mathbb{Z}\gamma+\frac{\gamma}{18}}\oplus V_{\mathbb{Z}\gamma+\frac{5\gamma}{18}}.\label{7.33}
      \end{equation}
      \begin{equation}
          (V_{\mathbb{Z}\beta}^+)^1\boxtimes V_{\mathbb{Z}\gamma+\frac{8\gamma}{18}}=V_{\mathbb{Z}\gamma+\frac{2\gamma}{18}}\oplus V_{\mathbb{Z}\gamma+\frac{4\gamma}{18}}.\label{7.34}
      \end{equation}
      \begin{equation}
          (17).(V_{\mathbb{Z}\beta}^+)^1\boxtimes V_{\mathbb{Z}\zeta +\frac{s\zeta}{32}}=V_{\frac{s\zeta}{32}+\mathbb{Z}\zeta}\oplus V_{\mathbb{Z}\zeta+\frac{(16-s)\zeta}{32}}.\label{7.35}
      \end{equation}
      \begin{equation}
          (18).(V_{\mathbb{Z}\beta}^+)^1\boxtimes V_{\mathbb{Z}\zeta}^{T_2,\pm}=V_{\mathbb{Z}\zeta}^{T_2,+}\oplus V_{\mathbb{Z}\zeta}^{T_2,-}.\label{7.36}
      \end{equation}

      \begin{equation}
          (19). (V_{\mathbb{Z}\beta}^-)^i\boxtimes (V_{\mathbb{Z}\beta}^-)^j= ((V_{\mathbb{Z}\beta}^+)^0)^{\overline{i+j}}\oplus (V_{\mathbb{Z}\beta}^+)^1\oplus (V_{\mathbb{Z}\beta}^-)^+\oplus (V_{\mathbb{Z}\beta}^-)^-.\label {7.37}(i,j=0,1)
      \end{equation}
      \begin{equation}
          (20).(V_{\mathbb{Z}\beta}^-)^i\boxtimes (V_{\mathbb{Z}\beta+\frac{\beta}{4}}^0)^j= (V_{\mathbb{Z}\beta+\frac{\beta}{4}}^0)^{\overline{i+j}}\oplus V_{\mathbb{Z}\beta+\frac{\beta}{4}}^1.\label{7.38}
      \end{equation}
      \begin{equation}
         (21). (V_{\mathbb{Z}\beta}^-)^i\boxtimes V_{\mathbb{Z}\beta+\frac{\beta}{4}}^1= (V_{\mathbb{Z}\beta+\frac{\beta}{4}}^0)^{+}\oplus (V_{\mathbb{Z}\beta+\frac{\beta}{4}}^0)^-\oplus 2V_{\mathbb{Z}\beta+\frac{\beta}{4}}^1.\label{7.38a}
      \end{equation}
      \begin{equation}
          (22).(V_{\mathbb{Z}\beta}^-)^i\boxtimes (V_{\mathbb{Z}\beta+\frac{k\beta}{8}})^j=(V_{\mathbb{Z}\beta+\frac{k\beta}{8}})^{\overline{i+j}}\oplus V_{\mathbb{Z}\beta+\frac{(4-k)\beta}{8}}^+\oplus V_{\mathbb{Z}\beta+\frac{(4-k)\beta}{8}}^-.\label{7.39}(k=1,3,j=0,1)
      \end{equation}
      \begin{eqnarray}
          (23).(V_{\mathbb{Z}\beta}^-)^i\boxtimes V_{\mathbb{Z}\gamma+\frac{r\gamma}{18}}=V_{\mathbb{Z}\gamma+\frac{\gamma}{18}}\oplus V_{\mathbb{Z}\gamma+\frac{5\gamma}{18}}\oplus V_{\mathbb{Z}\gamma+\frac{7\gamma}{18}}, \label{7.40}\mbox{ for r is odd}.
      \end{eqnarray}
      \begin{eqnarray}
          (V_{\mathbb{Z}\beta}^-)^i\boxtimes V_{\mathbb{Z}\gamma+\frac{r\gamma}{18}}=V_{\mathbb{Z}\gamma+\frac{2\gamma}{18}}\oplus V_{\mathbb{Z}\gamma+\frac{4\gamma}{18}}\oplus V_{\mathbb{Z}\gamma+\frac{8\gamma}{18}}, \label{7.41}\mbox{ for r is even}.
      \end{eqnarray}
      \begin{equation}
\begin{aligned}
& (V_{\mathbb{Z}\beta}^-)^+
  \boxtimes V_{\mathbb{Z}\zeta+\frac{\zeta}{32}}
 = V_{\mathbb{Z}\zeta+\frac{\zeta}{32}}
 \oplus V_{\mathbb{Z}\zeta+\frac{7\zeta}{32}}
 \oplus V_{\mathbb{Z}\zeta+\frac{9\zeta}{32}},\\
& (V_{\mathbb{Z}\beta}^-)^+
  \boxtimes V_{\mathbb{Z}\zeta+\frac{3\zeta}{32}}
 = V_{\mathbb{Z}\zeta+\frac{3\zeta}{32}}
 \oplus V_{\mathbb{Z}\zeta+\frac{5\zeta}{32}}
 \oplus V_{\mathbb{Z}\zeta+\frac{11\zeta}{32}},\\
& (V_{\mathbb{Z}\beta}^-)^+
  \boxtimes V_{\mathbb{Z}\zeta+\frac{5\zeta}{32}}
 = V_{\mathbb{Z}\zeta+\frac{3\zeta}{32}}
 \oplus V_{\mathbb{Z}\zeta+\frac{5\zeta}{32}}
 \oplus V_{\mathbb{Z}\zeta+\frac{13\zeta}{32}},\\
& (V_{\mathbb{Z}\beta}^-)^+
  \boxtimes V_{\mathbb{Z}\zeta+\frac{7\zeta}{32}}
 = V_{\mathbb{Z}\zeta+\frac{\zeta}{32}}
 \oplus V_{\mathbb{Z}\zeta+\frac{7\zeta}{32}}
 \oplus V_{\mathbb{Z}\zeta+\frac{15\zeta}{32}},\\
& (V_{\mathbb{Z}\beta}^-)^+
  \boxtimes V_{\mathbb{Z}\zeta+\frac{9\zeta}{32}}
 = V_{\mathbb{Z}\zeta+\frac{\zeta}{32}}
 \oplus V_{\mathbb{Z}\zeta+\frac{9\zeta}{32}}
 \oplus V_{\mathbb{Z}\zeta+\frac{15\zeta}{32}},\\
& (V_{\mathbb{Z}\beta}^-)^+
  \boxtimes V_{\mathbb{Z}\zeta+\frac{11\zeta}{32}}
 = V_{\mathbb{Z}\zeta+\frac{3\zeta}{32}}
 \oplus V_{\mathbb{Z}\zeta+\frac{11\zeta}{32}}
 \oplus V_{\mathbb{Z}\zeta+\frac{13\zeta}{32}},\\
& (V_{\mathbb{Z}\beta}^-)^+
  \boxtimes V_{\mathbb{Z}\zeta+\frac{13\zeta}{32}}
 = V_{\mathbb{Z}\zeta+\frac{5\zeta}{32}}
 \oplus V_{\mathbb{Z}\zeta+\frac{11\zeta}{32}}
 \oplus V_{\mathbb{Z}\zeta+\frac{13\zeta}{32}},\\
& (V_{\mathbb{Z}\beta}^-)^+
  \boxtimes V_{\mathbb{Z}\zeta+\frac{15\zeta}{32}}
 = V_{\mathbb{Z}\zeta+\frac{7\zeta}{32}}
 \oplus V_{\mathbb{Z}\zeta+\frac{9\zeta}{32}}
 \oplus V_{\mathbb{Z}\zeta+\frac{15\zeta}{32}},\\[4pt]
& (V_{\mathbb{Z}\beta}^-)^-
  \boxtimes V_{\mathbb{Z}\zeta+\frac{\zeta}{32}}
 = V_{\mathbb{Z}\zeta+\frac{7\zeta}{32}}
 \oplus V_{\mathbb{Z}\zeta+\frac{9\zeta}{32}}
 \oplus V_{\mathbb{Z}\zeta+\frac{15\zeta}{32}},\\
& (V_{\mathbb{Z}\beta}^-)^-
  \boxtimes V_{\mathbb{Z}\zeta+\frac{3\zeta}{32}}
 = V_{\mathbb{Z}\zeta+\frac{5\zeta}{32}}
 \oplus V_{\mathbb{Z}\zeta+\frac{11\zeta}{32}}
 \oplus V_{\mathbb{Z}\zeta+\frac{13\zeta}{32}},\\
& (V_{\mathbb{Z}\beta}^-)^-
  \boxtimes V_{\mathbb{Z}\zeta+\frac{5\zeta}{32}}
 = V_{\mathbb{Z}\zeta+\frac{3\zeta}{32}}
 \oplus V_{\mathbb{Z}\zeta+\frac{11\zeta}{32}}
 \oplus V_{\mathbb{Z}\zeta+\frac{13\zeta}{32}},\\
& (V_{\mathbb{Z}\beta}^-)^-
  \boxtimes V_{\mathbb{Z}\zeta+\frac{7\zeta}{32}}
 = V_{\mathbb{Z}\zeta+\frac{\zeta}{32}}
 \oplus V_{\mathbb{Z}\zeta+\frac{9\zeta}{32}}
 \oplus V_{\mathbb{Z}\zeta+\frac{15\zeta}{32}},\\
& (V_{\mathbb{Z}\beta}^-)^-
  \boxtimes V_{\mathbb{Z}\zeta+\frac{9\zeta}{32}}
 = V_{\mathbb{Z}\zeta+\frac{\zeta}{32}}
 \oplus V_{\mathbb{Z}\zeta+\frac{7\zeta}{32}}
 \oplus V_{\mathbb{Z}\zeta+\frac{15\zeta}{32}},\\
& (V_{\mathbb{Z}\beta}^-)^-
  \boxtimes V_{\mathbb{Z}\zeta+\frac{11\zeta}{32}}
 = V_{\mathbb{Z}\zeta+\frac{3\zeta}{32}}
 \oplus V_{\mathbb{Z}\zeta+\frac{5\zeta}{32}}
 \oplus V_{\mathbb{Z}\zeta+\frac{13\zeta}{32}},\\
& (V_{\mathbb{Z}\beta}^-)^-
  \boxtimes V_{\mathbb{Z}\zeta+\frac{13\zeta}{32}}
 = V_{\mathbb{Z}\zeta+\frac{3\zeta}{32}}
 \oplus V_{\mathbb{Z}\zeta+\frac{5\zeta}{32}}
 \oplus V_{\mathbb{Z}\zeta+\frac{11\zeta}{32}},\\
& (V_{\mathbb{Z}\beta}^-)^-
  \boxtimes V_{\mathbb{Z}\zeta+\frac{15\zeta}{32}}
 = V_{\mathbb{Z}\zeta+\frac{\zeta}{32}}
 \oplus V_{\mathbb{Z}\zeta+\frac{7\zeta}{32}}
 \oplus V_{\mathbb{Z}\zeta+\frac{9\zeta}{32}}.
\end{aligned}
\label{7.44}
\end{equation}

      \begin{equation}
         (25). (V_{\mathbb{Z}\beta+\frac{\beta}{4}}^0)^i\boxtimes (V_{\mathbb{Z}\beta+\frac{\beta}{4}}^0)^j=((V_{\mathbb{Z}\beta}^+)^0)^{\overline{i+j}}\oplus (V_{\mathbb{Z}\beta}^-)^{\overline{i+j}}. \label{7.45}(i,j=0,1)
      \end{equation}
      \begin{equation}
          (26).(V_{\mathbb{Z}\beta+\frac{\beta}{4}}^0)^i\boxtimes V_{\mathbb{Z}\beta+\frac{\beta}{4}}^1=  (V_{\mathbb{Z}\beta}^+)^1\oplus(V_{\mathbb{Z}\beta}^-)^+\oplus (V_{\mathbb{Z}\beta}^-)^-.\label{7.46} (i=0,1)
      \end{equation}
      \begin{align}
(27).\quad 
&(V_{\mathbb{Z}\beta+\frac{\beta}{4}}^0)^+
 \boxtimes (V_{\mathbb{Z}\beta+\frac{\beta}{8}})^+
 =
 V_{\mathbb{Z}\beta+\frac{3\beta}{8}}^+
 \oplus
 V_{\mathbb{Z}\beta+\frac{\beta}{8}}^+,
\nonumber\\
&
(V_{\mathbb{Z}\beta+\frac{\beta}{4}}^0)^+
 \boxtimes (V_{\mathbb{Z}\beta+\frac{\beta}{8}})^-
 =
 V_{\mathbb{Z}\beta+\frac{3\beta}{8}}^-
 \oplus
 V_{\mathbb{Z}\beta+\frac{\beta}{8}}^-,
\nonumber\\
&
(V_{\mathbb{Z}\beta+\frac{\beta}{4}}^0)^+
 \boxtimes (V_{\mathbb{Z}\beta+\frac{3\beta}{8}})^+
 =
 V_{\mathbb{Z}\beta+\frac{3\beta}{8}}^-
 \oplus
 V_{\mathbb{Z}\beta+\frac{\beta}{8}}^+,
\nonumber\\
&
(V_{\mathbb{Z}\beta+\frac{\beta}{4}}^0)^+
 \boxtimes (V_{\mathbb{Z}\beta+\frac{3\beta}{8}})^-
 =
 V_{\mathbb{Z}\beta+\frac{3\beta}{8}}^+
 \oplus
 V_{\mathbb{Z}\beta+\frac{\beta}{8}}^-,
\nonumber\\
&
(V_{\mathbb{Z}\beta+\frac{\beta}{4}}^0)^-
 \boxtimes (V_{\mathbb{Z}\beta+\frac{\beta}{8}})^+
 =
 V_{\mathbb{Z}\beta+\frac{3\beta}{8}}^-
 \oplus
 V_{\mathbb{Z}\beta+\frac{\beta}{8}}^-,
\nonumber\\
&
(V_{\mathbb{Z}\beta+\frac{\beta}{4}}^0)^-
 \boxtimes (V_{\mathbb{Z}\beta+\frac{\beta}{8}})^-
 =
 V_{\mathbb{Z}\beta+\frac{3\beta}{8}}^+
 \oplus
 V_{\mathbb{Z}\beta+\frac{\beta}{8}}^+,
\nonumber\\
&
(V_{\mathbb{Z}\beta+\frac{\beta}{4}}^0)^-
 \boxtimes (V_{\mathbb{Z}\beta+\frac{3\beta}{8}})^+
 =
 V_{\mathbb{Z}\beta+\frac{3\beta}{8}}^+
 \oplus
 V_{\mathbb{Z}\beta+\frac{\beta}{8}}^-,
\nonumber\\
&
(V_{\mathbb{Z}\beta+\frac{\beta}{4}}^0)^-
 \boxtimes (V_{\mathbb{Z}\beta+\frac{3\beta}{8}})^-
 =
 V_{\mathbb{Z}\beta+\frac{3\beta}{8}}^-
 \oplus
 V_{\mathbb{Z}\beta+\frac{\beta}{8}}^+.
\label{7.46a}
\end{align}
      \begin{align}
(28).\quad
&(V_{\mathbb{Z}\beta+\frac{\beta}{4}}^0)^i
 \boxtimes V_{\mathbb{Z}\gamma+\frac{\gamma}{18}}
 =
 V_{\mathbb{Z}\gamma+\frac{2\gamma}{18}}
 \oplus
 V_{\mathbb{Z}\gamma+\frac{4\gamma}{18}},
\nonumber\\
&
(V_{\mathbb{Z}\beta+\frac{\beta}{4}}^0)^i
 \boxtimes V_{\mathbb{Z}\gamma+\frac{2\gamma}{18}}
 =
 V_{\mathbb{Z}\gamma+\frac{\gamma}{18}}
 \oplus
 V_{\mathbb{Z}\gamma+\frac{5\gamma}{18}},
\nonumber\\
&
(V_{\mathbb{Z}\beta+\frac{\beta}{4}}^0)^i
 \boxtimes V_{\mathbb{Z}\gamma+\frac{4\gamma}{18}}
 =
 V_{\mathbb{Z}\gamma+\frac{\gamma}{18}}
 \oplus
 V_{\mathbb{Z}\gamma+\frac{7\gamma}{18}},
\nonumber\\
&
(V_{\mathbb{Z}\beta+\frac{\beta}{4}}^0)^i
 \boxtimes V_{\mathbb{Z}\gamma+\frac{5\gamma}{18}}
 =
 V_{\mathbb{Z}\gamma+\frac{2\gamma}{18}}
 \oplus
 V_{\mathbb{Z}\gamma+\frac{8\gamma}{18}},
\nonumber\\
&
(V_{\mathbb{Z}\beta+\frac{\beta}{4}}^0)^i
 \boxtimes V_{\mathbb{Z}\gamma+\frac{7\gamma}{18}}
 =
 V_{\mathbb{Z}\gamma+\frac{4\gamma}{18}}
 \oplus
 V_{\mathbb{Z}\gamma+\frac{8\gamma}{18}},
\nonumber\\
&
(V_{\mathbb{Z}\beta+\frac{\beta}{4}}^0)^i
 \boxtimes V_{\mathbb{Z}\gamma+\frac{8\gamma}{18}}
 =
 V_{\mathbb{Z}\gamma+\frac{5\gamma}{18}}
 \oplus
 V_{\mathbb{Z}\gamma+\frac{7\gamma}{18}}.
\label{7.47}
\end{align}
      \begin{align}
(29).\quad
&(V_{\mathbb{Z}\beta+\frac{\beta}{4}}^0)^+
 \boxtimes V_{\mathbb{Z}\zeta+\frac{\zeta}{32}}
 =
 V_{\mathbb{Z}\zeta+\frac{3\zeta}{32}}
 \oplus
 V_{\mathbb{Z}\zeta+\frac{5\zeta}{32}},
\nonumber\\
&
(V_{\mathbb{Z}\beta+\frac{\beta}{4}}^0)^+
 \boxtimes V_{\mathbb{Z}\zeta+\frac{3\zeta}{32}}
 =
 V_{\mathbb{Z}\zeta+\frac{\zeta}{32}}
 \oplus
 V_{\mathbb{Z}\zeta+\frac{7\zeta}{32}},
\nonumber\\
&
(V_{\mathbb{Z}\beta+\frac{\beta}{4}}^0)^+
 \boxtimes V_{\mathbb{Z}\zeta+\frac{5\zeta}{32}}
 =
 V_{\mathbb{Z}\zeta+\frac{\zeta}{32}}
 \oplus
 V_{\mathbb{Z}\zeta+\frac{9\zeta}{32}},
\nonumber\\
&
(V_{\mathbb{Z}\beta+\frac{\beta}{4}}^0)^+
 \boxtimes V_{\mathbb{Z}\zeta+\frac{7\zeta}{32}}
 =
 V_{\mathbb{Z}\zeta+\frac{3\zeta}{32}}
 \oplus
 V_{\mathbb{Z}\zeta+\frac{11\zeta}{32}},
\nonumber\\
&
(V_{\mathbb{Z}\beta+\frac{\beta}{4}}^0)^+
 \boxtimes V_{\mathbb{Z}\zeta+\frac{9\zeta}{32}}
 =
 V_{\mathbb{Z}\zeta+\frac{5\zeta}{32}}
 \oplus
 V_{\mathbb{Z}\zeta+\frac{13\zeta}{32}},
\nonumber\\
&
(V_{\mathbb{Z}\beta+\frac{\beta}{4}}^0)^+
 \boxtimes V_{\mathbb{Z}\zeta+\frac{11\zeta}{32}}
 =
 V_{\mathbb{Z}\zeta+\frac{7\zeta}{32}}
 \oplus
 V_{\mathbb{Z}\zeta+\frac{15\zeta}{32}},
\nonumber\\
&
(V_{\mathbb{Z}\beta+\frac{\beta}{4}}^0)^+
 \boxtimes V_{\mathbb{Z}\zeta+\frac{13\zeta}{32}}
 =
 V_{\mathbb{Z}\zeta+\frac{9\zeta}{32}}
 \oplus
 V_{\mathbb{Z}\zeta+\frac{15\zeta}{32}},
\nonumber\\
&
(V_{\mathbb{Z}\beta+\frac{\beta}{4}}^0)^+
 \boxtimes V_{\mathbb{Z}\zeta+\frac{15\zeta}{32}}
 =
 V_{\mathbb{Z}\zeta+\frac{11\zeta}{32}}
 \oplus
 V_{\mathbb{Z}\zeta+\frac{13\zeta}{32}},
\nonumber\\
&
(V_{\mathbb{Z}\beta+\frac{\beta}{4}}^0)^-
 \boxtimes V_{\mathbb{Z}\zeta+\frac{\zeta}{32}}
 =
 V_{\mathbb{Z}\zeta+\frac{11\zeta}{32}}
 \oplus
 V_{\mathbb{Z}\zeta+\frac{13\zeta}{32}},
\nonumber\\
&
(V_{\mathbb{Z}\beta+\frac{\beta}{4}}^0)^-
 \boxtimes V_{\mathbb{Z}\zeta+\frac{3\zeta}{32}}
 =
 V_{\mathbb{Z}\zeta+\frac{9\zeta}{32}}
 \oplus
 V_{\mathbb{Z}\zeta+\frac{15\zeta}{32}},
\nonumber\\
&
(V_{\mathbb{Z}\beta+\frac{\beta}{4}}^0)^-
 \boxtimes V_{\mathbb{Z}\zeta+\frac{5\zeta}{32}}
 =
 V_{\mathbb{Z}\zeta+\frac{7\zeta}{32}}
 \oplus
 V_{\mathbb{Z}\zeta+\frac{15\zeta}{32}},
\nonumber\\
&
(V_{\mathbb{Z}\beta+\frac{\beta}{4}}^0)^-
 \boxtimes V_{\mathbb{Z}\zeta+\frac{7\zeta}{32}}
 =
 V_{\mathbb{Z}\zeta+\frac{5\zeta}{32}}
 \oplus
 V_{\mathbb{Z}\zeta+\frac{13\zeta}{32}},
\nonumber\\
&
(V_{\mathbb{Z}\beta+\frac{\beta}{4}}^0)^-
 \boxtimes V_{\mathbb{Z}\zeta+\frac{9\zeta}{32}}
 =
 V_{\mathbb{Z}\zeta+\frac{3\zeta}{32}}
 \oplus
 V_{\mathbb{Z}\zeta+\frac{11\zeta}{32}},
\nonumber\\
&
(V_{\mathbb{Z}\beta+\frac{\beta}{4}}^0)^-
 \boxtimes V_{\mathbb{Z}\zeta+\frac{11\zeta}{32}}
 =
 V_{\mathbb{Z}\zeta+\frac{\zeta}{32}}
 \oplus
 V_{\mathbb{Z}\zeta+\frac{9\zeta}{32}},
\nonumber\\
&
(V_{\mathbb{Z}\beta+\frac{\beta}{4}}^0)^-
 \boxtimes V_{\mathbb{Z}\zeta+\frac{13\zeta}{32}}
 =
 V_{\mathbb{Z}\zeta+\frac{\zeta}{32}}
 \oplus
 V_{\mathbb{Z}\zeta+\frac{7\zeta}{32}},
\nonumber\\
&
(V_{\mathbb{Z}\beta+\frac{\beta}{4}}^0)^-
 \boxtimes V_{\mathbb{Z}\zeta+\frac{15\zeta}{32}}
 =
 V_{\mathbb{Z}\zeta+\frac{3\zeta}{32}}
 \oplus
 V_{\mathbb{Z}\zeta+\frac{5\zeta}{32}}.
\label{7.48}
\end{align}
      \begin{equation}
          (30).(V_{\mathbb{Z}\beta+\frac{\beta}{4}}^0)^i\boxtimes V_{\mathbb{Z}\zeta}^{T_2,j}=V_{\mathbb{Z}\zeta}^{T_2,+}\oplus V_{\mathbb{Z}\zeta}^{T_2,-}. \quad(i=0,1,j=0,1)\label{7.49}
      \end{equation}
      \begin{equation}
          (31).V_{\mathbb{Z}\beta+\frac{\beta}{4}}^1\boxtimes V_{\mathbb{Z}\beta+\frac{\beta}{4}}^1=((V_{\mathbb{Z}\beta}^+)^0)^+\oplus ((V_{\mathbb{Z}\beta}^+)^0)^-\oplus (V_{\mathbb{Z}\beta}^+)^1\oplus 2(V_{\mathbb{Z}\beta}^-)^+\oplus 2(V_{\mathbb{Z}\beta}^-)^-.\label{7.50}
      \end{equation}
      \begin{equation}
          (32).V_{\mathbb{Z}\beta+\frac{\beta}{4}}^1\boxtimes (V_{\mathbb{Z}\beta+\frac{k\beta}{8}})^j=\bigoplus_{k=1,3,s=0,1} (V_{\mathbb{Z}\beta+\frac{k\beta}{8}})^s.\label{7.51}
      \end{equation}
      \begin{eqnarray}
         (33). & &V_{\mathbb{Z}\beta+\frac{\beta}{4}}^1\boxtimes (V_{\mathbb{Z}\gamma+\frac{\gamma}{18}})=V_{\mathbb{Z}\gamma+\frac{2\gamma}{18}}\oplus V_{\mathbb{Z}\gamma+\frac{4\gamma}{18}}\oplus 2V_{\mathbb{Z}\gamma+\frac{8\gamma}{18}}, \nonumber\\
          & & V_{\mathbb{Z}\beta+\frac{\beta}{4}}^1\boxtimes (V_{\mathbb{Z}\gamma+\frac{2\gamma}{18}})=V_{\mathbb{Z}\gamma+\frac{\gamma}{18}}\oplus V_{\mathbb{Z}\gamma+\frac{5\gamma}{18}}\oplus 2V_{\mathbb{Z}\gamma+\frac{7\gamma}{18}}, \nonumber\\
          & &V_{\mathbb{Z}\beta+\frac{\beta}{4}}^1\boxtimes (V_{\mathbb{Z}\gamma+\frac{4\gamma}{18}})=V_{\mathbb{Z}\gamma+\frac{\gamma}{18}}\oplus 2V_{\mathbb{Z}\gamma+\frac{5\gamma}{18}}\oplus V_{\mathbb{Z}\gamma+\frac{7\gamma}{18}}, \nonumber\\
          & &V_{\mathbb{Z}\beta+\frac{\beta}{4}}^1\boxtimes (V_{\mathbb{Z}\gamma+\frac{5\gamma}{18}})=V_{\mathbb{Z}\gamma+\frac{2\gamma}{18}}\oplus 2V_{\mathbb{Z}\gamma+\frac{4\gamma}{18}}\oplus V_{\mathbb{Z}\gamma+\frac{8\gamma}{18}}, \nonumber\\
          & &V_{\mathbb{Z}\beta+\frac{\beta}{4}}^1\boxtimes (V_{\mathbb{Z}\gamma+\frac{7\gamma}{18}})=2V_{\mathbb{Z}\gamma+\frac{2\gamma}{18}}\oplus V_{\mathbb{Z}\gamma+\frac{4\gamma}{18}}\oplus V_{\mathbb{Z}\gamma+\frac{8\gamma}{18}}, \nonumber\\
          & &V_{\mathbb{Z}\beta+\frac{\beta}{4}}^1\boxtimes (V_{\mathbb{Z}\gamma+\frac{8\gamma}{18}})=2V_{\mathbb{Z}\gamma+\frac{\gamma}{18}}\oplus V_{\mathbb{Z}\gamma+\frac{5\gamma}{18}}\oplus V_{\mathbb{Z}\gamma+\frac{7\gamma}{18}}, \nonumber\\\label {7.52}
      \end{eqnarray}
      \begin{align}
(34). V_{\mathbb{Z}\beta+\frac{\beta}{4}}^1 \boxtimes 
V_{\mathbb{Z}\zeta+\frac{s\zeta}{32}}
&=
V_{\mathbb{Z}\zeta+\frac{3\zeta}{32}}
\oplus
V_{\mathbb{Z}\zeta+\frac{5\zeta}{32}}
\oplus
V_{\mathbb{Z}\zeta+\frac{11\zeta}{32}}
\oplus
V_{\mathbb{Z}\zeta+\frac{13\zeta}{32}},
\quad (s=1,7,9,15) \nonumber \\
V_{\mathbb{Z}\beta+\frac{\beta}{4}}^1 \boxtimes 
V_{\mathbb{Z}\zeta+\frac{s\zeta}{32}}&=
V_{\mathbb{Z}\zeta+\frac{\zeta}{32}}
\oplus
V_{\mathbb{Z}\zeta+\frac{7\zeta}{32}}
\oplus
V_{\mathbb{Z}\zeta+\frac{9\zeta}{32}}
\oplus
V_{\mathbb{Z}\zeta+\frac{15\zeta}{32}},
\quad (s=3,5,11,13).
\label{7.53}
\end{align}

\begin{eqnarray}
     (35).(V_{\mathbb{Z}\beta+\frac{i\beta}{8}})^j\boxtimes (V_{\mathbb{Z}\beta+\frac{k\beta}{8}})^l&=&(V_{\mathbb{Z}\beta+\frac{\beta}{4}}^{\overline{j+l}})\oplus V_{\mathbb{Z}\beta+\frac{\beta}{4}}^1\oplus ((V_{\mathbb{Z}\beta}^+)^0)^{\overline{j+l}}\oplus (V_{\mathbb{Z}\beta}^+)^1\oplus (V_{\mathbb{Z}\beta}^-)^{\overline{j+l}}\oplus V_{\mathbb{Z}\beta+\frac{\beta}{8}}^{\pm}\nonumber \\
    & &\oplus V_{\mathbb{Z}\beta+\frac{3\beta}{8}}^{\pm}, \mbox{ for i=k=1,j,l=0,1}.
    \label{7.54a}
\end{eqnarray}
\begin{eqnarray}
    (V_{\mathbb{Z}\beta+\frac{i\beta}{8}})^j\boxtimes (V_{\mathbb{Z}\beta+\frac{k\beta}{8}})^l&=&(V_{\mathbb{Z}\beta+\frac{\beta}{4}}^{\overline{j+l+1}})\oplus V_{\mathbb{Z}\beta+\frac{\beta}{4}}^1\oplus ((V_{\mathbb{Z}\beta}^+)^0)^{\overline{j+l}}\oplus (V_{\mathbb{Z}\beta}^+)^1\oplus (V_{\mathbb{Z}\beta}^-)^{\overline{j+l}}\oplus V_{\mathbb{Z}\beta+\frac{\beta}{8}}^{+}\nonumber \\
    & &\oplus V_{\mathbb{Z}\beta+\frac{\beta}{8}}^{-}\oplus V_{\mathbb{Z}\beta+\frac{3\beta}{8}}^{+}\oplus V_{\mathbb{Z}\beta+\frac{3\beta}{8}}^{-}, \mbox{ for i=k=3,j,l=0,1}.\label{7.55}
\end{eqnarray}
\begin{eqnarray}
    (V_{\mathbb{Z}\beta+\frac{i\beta}{8}})^j\boxtimes (V_{\mathbb{Z}\beta+\frac{k\beta}{8}})^l&=&(V_{\mathbb{Z}\beta+\frac{\beta}{4}}^{\overline{j+l}})\oplus V_{\mathbb{Z}\beta+\frac{\beta}{4}}^1\oplus (V_{\mathbb{Z}\beta}^-)^+\oplus (V_{\mathbb{Z}\beta}^-)^-\oplus V_{\mathbb{Z}\beta+\frac{\beta}{8}}^{+}\nonumber \\
    & &\oplus V_{\mathbb{Z}\beta+\frac{\beta}{8}}^{-} \oplus V_{\mathbb{Z}\beta+\frac{3\beta}{8}}^{+}\oplus V_{\mathbb{Z}\beta+\frac{3\beta}{8}}^{-}, \mbox{ for } i\neq k,j,l=0,1. \label{7.56}
\end{eqnarray}
\begin{eqnarray}
    (36). W_{\delta,1}^k\boxtimes W_{\delta,1}^{l}&=&W_{\delta,1}^0\oplus W_{\delta,1}^1\oplus W_{\delta,1}^2\oplus W_{\delta,2}^{\overline{\overline{k+l}}}\oplus ((V_{\mathbb{Z}\beta}^+)^0)^+\oplus ((V_{\mathbb{Z}\beta}^+)^0)^-\oplus (V_{\mathbb{Z}\beta}^-)^+\oplus (V_{\mathbb{Z}\beta}^-)^-\nonumber\\ 
    & & \oplus (V_{\mathbb{Z}\beta+\frac{\beta}{8}})^+\oplus (V_{\mathbb{Z}\beta+\frac{\beta}{8}})^-\oplus (V_{\mathbb{Z}\beta+\frac{3\beta}{8}})^+\oplus (V_{\mathbb{Z}\beta+\frac{3\beta}{8}})^-.\mbox{ for }k=l,k,l=0,1,2.\nonumber\\ \label{7.57}
\end{eqnarray}
\begin{eqnarray}
    W_{\delta,1}^k\boxtimes W_{\delta,1}^{l}&=&W_{\delta,1}^0\oplus W_{\delta,1}^1\oplus W_{\delta,1}^2\oplus W_{\delta,2}^{\overline{\overline{k+l}}}\oplus (V_{\mathbb{Z}\beta}^+)^1\oplus (V_{\mathbb{Z}\beta}^-)^+\oplus (V_{\mathbb{Z}\beta}^-)^-\nonumber\\ 
    & & \oplus (V_{\mathbb{Z}\beta+\frac{\beta}{8}})^+\oplus (V_{\mathbb{Z}\beta+\frac{\beta}{8}})^-\oplus (V_{\mathbb{Z}\beta+\frac{3\beta}{8}})^+\oplus (V_{\mathbb{Z}\beta+\frac{3\beta}{8}})^-.\mbox{ for }k\neq l,k,l=0,1,2.\nonumber\\ \label{7.58}
\end{eqnarray}
\begin{eqnarray}
    W_{\delta,2}^k\boxtimes W_{\delta,2}^{l}&=&W_{\delta,1}^0\oplus W_{\delta,1}^1\oplus W_{\delta,1}^2\oplus W_{\delta,2}^{\overline{\overline{1-k-l}}}\oplus ((V_{\mathbb{Z}\beta}^+)^0)^+\oplus ((V_{\mathbb{Z}\beta}^+)^0)^-\oplus (V_{\mathbb{Z}\beta}^-)^+\oplus (V_{\mathbb{Z}\beta}^-)^-\nonumber\\ 
    & & \oplus (V_{\mathbb{Z}\beta+\frac{\beta}{8}})^+\oplus (V_{\mathbb{Z}\beta+\frac{\beta}{8}})^-\oplus (V_{\mathbb{Z}\beta+\frac{3\beta}{8}})^+\oplus (V_{\mathbb{Z}\beta+\frac{3\beta}{8}})^-.\mbox{ for }k=l,k,l=0,1,2.\nonumber\\ \label{7.59}
\end{eqnarray}
\begin{eqnarray}
    W_{\delta,2}^k\boxtimes W_{\delta,2}^{l}&=&W_{\delta,1}^0\oplus W_{\delta,1}^1\oplus W_{\delta,1}^2\oplus W_{\delta,2}^{\overline{\overline{1-k-l}}}\oplus (V_{\mathbb{Z}\beta}^+)^1\oplus (V_{\mathbb{Z}\beta}^-)^+\oplus (V_{\mathbb{Z}\beta}^-)^-\nonumber\\ 
    & & \oplus (V_{\mathbb{Z}\beta+\frac{\beta}{8}})^+\oplus (V_{\mathbb{Z}\beta+\frac{\beta}{8}})^-\oplus (V_{\mathbb{Z}\beta+\frac{3\beta}{8}})^+\oplus (V_{\mathbb{Z}\beta+\frac{3\beta}{8}})^-.\mbox{ for }k\neq l,k,l=0,1,2.\nonumber\\ \label{7.60}
\end{eqnarray}
\begin{eqnarray}
    V_{\mathbb{Z}\gamma+\frac{\gamma}{18}}\boxtimes V_{\mathbb{Z}\gamma+\frac{2\gamma}{18}}&=&V_{\mathbb{Z}\gamma+\frac{\gamma}{18}}\oplus V_{\mathbb{Z}\gamma+\frac{2\gamma}{18}}\oplus V_{\mathbb{Z}\gamma+\frac{4\gamma}{18}}\oplus V_{\mathbb{Z}\gamma+\frac{8\gamma}{18}}\oplus (V_{\mathbb{Z}\beta+\frac{\beta}{4}}^0)^+\oplus (V_{\mathbb{Z}\beta+\frac{\beta}{4}}^0)^-\oplus V_{\mathbb{Z}\beta+\frac{\beta}{4}}^1\nonumber\\ 
    & &\oplus (V_{\mathbb{Z}\beta+\frac{\beta}{8}})^+\oplus (V_{\mathbb{Z}\beta+\frac{\beta}{8}})^-\oplus (V_{\mathbb{Z}\beta+\frac{3\beta}{8}})^+\oplus (V_{\mathbb{Z}\beta+\frac{3\beta}{8}})^-.\label{7.61}
\end{eqnarray}
\begin{eqnarray}
    V_{\mathbb{Z}\gamma+\frac{\gamma}{18}}\boxtimes V_{\mathbb{Z}\gamma+\frac{4\gamma}{18}}&=&V_{\mathbb{Z}\gamma+\frac{2\gamma}{18}}\oplus V_{\mathbb{Z}\gamma+\frac{4\gamma}{18}}\oplus V_{\mathbb{Z}\gamma+\frac{5\gamma}{18}}\oplus V_{\mathbb{Z}\gamma+\frac{8\gamma}{18}}\oplus (V_{\mathbb{Z}\beta+\frac{\beta}{4}}^0)^+\oplus (V_{\mathbb{Z}\beta+\frac{\beta}{4}}^0)^-\oplus V_{\mathbb{Z}\beta+\frac{\beta}{4}}^1\nonumber\\ 
    & &\oplus (V_{\mathbb{Z}\beta+\frac{\beta}{8}})^+\oplus (V_{\mathbb{Z}\beta+\frac{\beta}{8}})^-\oplus (V_{\mathbb{Z}\beta+\frac{3\beta}{8}})^+\oplus (V_{\mathbb{Z}\beta+\frac{3\beta}{8}})^-.\label{7.62}
\end{eqnarray}
\begin{eqnarray}
    V_{\mathbb{Z}\gamma+\frac{\gamma}{18}}\boxtimes V_{\mathbb{Z}\gamma+\frac{8\gamma}{18}}&=&V_{\mathbb{Z}\gamma+\frac{2\gamma}{18}}\oplus V_{\mathbb{Z}\gamma+\frac{4\gamma}{18}}\oplus V_{\mathbb{Z}\gamma+\frac{7\gamma}{18}}\oplus V_{\mathbb{Z}\gamma+\frac{8\gamma}{18}}\oplus 2V_{\mathbb{Z}\beta+\frac{\beta}{4}}^1\nonumber\\ 
    & &\oplus (V_{\mathbb{Z}\beta+\frac{\beta}{8}})^+\oplus (V_{\mathbb{Z}\beta+\frac{\beta}{8}})^-\oplus (V_{\mathbb{Z}\beta+\frac{3\beta}{8}})^+\oplus (V_{\mathbb{Z}\beta+\frac{3\beta}{8}})^-.\label{7.63}
\end{eqnarray}
\begin{eqnarray}
    V_{\mathbb{Z}\gamma+\frac{2\gamma}{18}}\boxtimes V_{\mathbb{Z}\gamma+\frac{5\gamma}{18}}&=&V_{\mathbb{Z}\gamma+\frac{2\gamma}{18}}\oplus V_{\mathbb{Z}\gamma+\frac{4\gamma}{18}}\oplus V_{\mathbb{Z}\gamma+\frac{7\gamma}{18}}\oplus V_{\mathbb{Z}\gamma+\frac{8\gamma}{18}}\oplus (V_{\mathbb{Z}\beta+\frac{\beta}{4}}^0)^+\oplus (V_{\mathbb{Z}\beta+\frac{\beta}{4}}^0)^-\oplus V_{\mathbb{Z}\beta+\frac{\beta}{4}}^1\nonumber\\ 
    & &\oplus (V_{\mathbb{Z}\beta+\frac{\beta}{8}})^+\oplus (V_{\mathbb{Z}\beta+\frac{\beta}{8}})^-\oplus (V_{\mathbb{Z}\beta+\frac{3\beta}{8}})^+\oplus (V_{\mathbb{Z}\beta+\frac{3\beta}{8}})^-.\label{7.64}
\end{eqnarray}
\begin{eqnarray}
    V_{\mathbb{Z}\gamma+\frac{4\gamma}{18}}\boxtimes V_{\mathbb{Z}\gamma+\frac{5\gamma}{18}}&=&V_{\mathbb{Z}\gamma+\frac{\gamma}{18}}\oplus V_{\mathbb{Z}\gamma+\frac{2\gamma}{18}}\oplus V_{\mathbb{Z}\gamma+\frac{4\gamma}{18}}\oplus V_{\mathbb{Z}\gamma+\frac{8\gamma}{18}}\oplus 2V_{\mathbb{Z}\beta+\frac{\beta}{4}}^1\nonumber\\ 
    & &\oplus (V_{\mathbb{Z}\beta+\frac{\beta}{8}})^+\oplus (V_{\mathbb{Z}\beta+\frac{\beta}{8}})^-\oplus (V_{\mathbb{Z}\beta+\frac{3\beta}{8}})^+\oplus (V_{\mathbb{Z}\beta+\frac{3\beta}{8}})^-.\label{7.65}
\end{eqnarray}

\begin{eqnarray}
    V_{\mathbb{Z}\gamma+\frac{5\gamma}{18}}\boxtimes V_{\mathbb{Z}\gamma+\frac{8\gamma}{18}}&=&V_{\mathbb{Z}\gamma+\frac{2\gamma}{18}}\oplus V_{\mathbb{Z}\gamma+\frac{4\gamma}{18}}\oplus V_{\mathbb{Z}\gamma+\frac{5\gamma}{18}}\oplus V_{\mathbb{Z}\gamma+\frac{8\gamma}{18}}\oplus (V_{\mathbb{Z}\beta+\frac{\beta}{4}}^0)^+\oplus (V_{\mathbb{Z}\beta+\frac{\beta}{4}}^0)^-\oplus V_{\mathbb{Z}\beta+\frac{\beta}{4}}^1\nonumber\\ 
    & &\oplus (V_{\mathbb{Z}\beta+\frac{\beta}{8}})^+\oplus (V_{\mathbb{Z}\beta+\frac{\beta}{8}})^-\oplus (V_{\mathbb{Z}\beta+\frac{3\beta}{8}})^+\oplus (V_{\mathbb{Z}\beta+\frac{3\beta}{8}})^-.\label{7.66}
\end{eqnarray}
\begin{eqnarray}
    V_{\mathbb{Z}\gamma+\frac{2\gamma}{18}}\boxtimes V_{\mathbb{Z}\gamma+\frac{7\gamma}{18}}&=&V_{\mathbb{Z}\gamma+\frac{2\gamma}{18}}\oplus V_{\mathbb{Z}\gamma+\frac{4\gamma}{18}}\oplus V_{\mathbb{Z}\gamma+\frac{5\gamma}{18}}\oplus V_{\mathbb{Z}\gamma+\frac{8\gamma}{18}}\oplus 2V_{\mathbb{Z}\beta+\frac{\beta}{4}}^1\nonumber\\ 
    & &\oplus (V_{\mathbb{Z}\beta+\frac{\beta}{8}})^+\oplus (V_{\mathbb{Z}\beta+\frac{\beta}{8}})^-\oplus (V_{\mathbb{Z}\beta+\frac{3\beta}{8}})^+\oplus (V_{\mathbb{Z}\beta+\frac{3\beta}{8}})^-.\label{7.67}
\end{eqnarray}
\begin{eqnarray}
    V_{\mathbb{Z}\gamma+\frac{4\gamma}{18}}\boxtimes V_{\mathbb{Z}\gamma+\frac{7\gamma}{18}}&=&V_{\mathbb{Z}\gamma+\frac{2\gamma}{18}}\oplus V_{\mathbb{Z}\gamma+\frac{4\gamma}{18}}\oplus V_{\mathbb{Z}\gamma+\frac{7\gamma}{18}}\oplus V_{\mathbb{Z}\gamma+\frac{8\gamma}{18}}\oplus (V_{\mathbb{Z}\beta+\frac{\beta}{4}}^0)^+\oplus (V_{\mathbb{Z}\beta+\frac{\beta}{4}}^0)^-\oplus V_{\mathbb{Z}\beta+\frac{\beta}{4}}^1\nonumber\\ 
    & &\oplus (V_{\mathbb{Z}\beta+\frac{\beta}{8}})^+\oplus (V_{\mathbb{Z}\beta+\frac{\beta}{8}})^-\oplus (V_{\mathbb{Z}\beta+\frac{3\beta}{8}})^+\oplus (V_{\mathbb{Z}\beta+\frac{3\beta}{8}})^-.\label{7.68}
\end{eqnarray}
\begin{eqnarray}
    V_{\mathbb{Z}\gamma+\frac{7\gamma}{18}}\boxtimes V_{\mathbb{Z}\gamma+\frac{8\gamma}{18}}&=&V_{\mathbb{Z}\gamma+\frac{\gamma}{18}}\oplus V_{\mathbb{Z}\gamma+\frac{2\gamma}{18}}\oplus V_{\mathbb{Z}\gamma+\frac{4\gamma}{18}}\oplus V_{\mathbb{Z}\gamma+\frac{8\gamma}{18}}\oplus (V_{\mathbb{Z}\beta+\frac{\beta}{4}}^0)^+\oplus (V_{\mathbb{Z}\beta+\frac{\beta}{4}}^0)^-\oplus V_{\mathbb{Z}\beta+\frac{\beta}{4}}^1\nonumber\\ 
    & &\oplus (V_{\mathbb{Z}\beta+\frac{\beta}{8}})^+\oplus (V_{\mathbb{Z}\beta+\frac{\beta}{8}})^-\oplus (V_{\mathbb{Z}\beta+\frac{3\beta}{8}})^+\oplus (V_{\mathbb{Z}\beta+\frac{3\beta}{8}})^-.\label{7.69}
\end{eqnarray}
  \end{theorem}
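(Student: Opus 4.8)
The plan is to use the Verlinde formula (Theorem~\ref{Verlinde formula}) as the main computational engine. Since $V_{L_2}^{S_4}$ is rational, $C_2$-cofinite and of CFT type by \cite{WZ}, and is self-dual, all hypotheses of Theorem~\ref{Verlinde formula} hold, so every fusion coefficient is given by
\[
N_{i,j}^{k}=\sum_{s=0}^{27}\frac{S_{i,s}\,S_{j,s}\,(S^{-1})_{s,k}}{S_{0,s}},\qquad (S^{-1})_{s,k}=S_{s,k'}.
\]
The full $S$-matrix is now available: the block among $M^{0}$--$M^{17}$ and the lattice modules $M^{18}$--$M^{25}$ comes from the lattice $S$-matrices via the coset decompositions in Propositions~6.5 and~6.7 and Corollary~6.6, while the entries involving the twisted modules $M^{26},M^{27}$ are supplied by Lemmas~6.9--6.11 and the Appendix. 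Every product listed in~(\ref{7.15})--(\ref{7.69}) has its factors drawn from modules whose $S$-rows have thereby been determined, so each $N_{i,j}^{k}$ reduces to a finite sum to be evaluated.

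To keep the computation honest and to fix multiplicities I would run two independent checks. First, quantum dimension multiplicativity (Theorem~5.2) forces
\[
\sum_{k}N_{i,j}^{k}\,\mathrm{qdim}(M^{k})=\mathrm{qdim}(M^{i})\,\mathrm{qdim}(M^{j})
\]
for each product; this scalar identity determines the total ``size'' of the right-hand side and, combined with the list of summands produced by Verlinde, pins down the coefficients $2$ that occur in~(\ref{7.25}), (\ref{7.50}), (\ref{7.52}) and (\ref{7.54a}). Second, for products whose two factors both embed as submodules of a single untwisted lattice module --- both inside $V_{\mathbb{Z}\zeta}$ and its cosets, or inside $V_{\mathbb{Z}\gamma}$, $V_{\mathbb{Z}\epsilon}$ via Corollary~6.6 and Proposition~6.7 --- I would apply Theorem~4.4 with $U=V_{\mathbb{Z}\zeta}$ (respectively $V_{\mathbb{Z}\gamma}$, $V_{\mathbb{Z}\epsilon}$) to bound each $N_{i,j}^{k}$ above by the known lattice fusion number, and then use the quantum dimension equality to show the bound is attained. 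The symmetries $N_{i,j}^{k}=N_{j,i}^{k}=N_{i,k'}^{j'}$ from \cite{FHL} cut down the number of products that must actually be computed.

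Several families admit a quicker structural treatment that I would present first. Because $\mathrm{qdim}\bigl(((V_{\mathbb{Z}\beta}^{+})^{0})^{\pm}\bigr)=1$, the modules $M^{0},M^{1}$ are simple currents (Theorem~5.4), and $\{M^{0},M^{1}\}\cong\mathbb{Z}_{2}$ acts by permutation on the set of irreducibles; the permutation is read off from the signs $S_{1,s}/S_{0,s}=\pm1$ and confirmed by matching conformal weights and lattice cosets (for instance the map $s\mapsto 16-s$ in~(\ref{7.22})--(\ref{7.23})), which yields~(\ref{7.15})--(\ref{7.24}) at once. For the products of the $V_{\mathbb{Z}\gamma+\frac{r}{18}\gamma}$-modules in item~$(36)$ I would follow \cite{DJJJY}: use the Schur--Weyl decompositions~(\ref{6.31})--(\ref{6.32}) to identify each $W_{\delta,i}^{j}$ with a $V_{\mathbb{Z}\gamma+\frac{r}{18}\gamma}$ through~(\ref{7.9})--(\ref{7.14}), and track the $\rho$- and $\delta$-eigenvalues to resolve the $\mathbb{Z}_{3}$-graded indices $\overline{\overline{\,\cdot\,}}$.

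The main obstacle is separating irreducible modules of equal quantum dimension: $M^{8}$--$M^{11}$ and $M^{18}$--$M^{25}$ all have quantum dimension $6$, and $M^{12}$--$M^{17}$ all have quantum dimension $8$, so the scalar quantum dimension test cannot distinguish them. Every such distinction must come from the fine structure of the $S$-matrix --- the coset pairings $e^{-2\pi i\langle\lambda_j,\lambda_k\rangle}$ of Lemma~6.11 and the twisted entries --- together with consistent bookkeeping of the $\mathbb{Z}_2$- and $\mathbb{Z}_3$-grading labels $\overline{i+j}$ and $\overline{\overline{\,\cdot\,}}$ inherited from the $\rho$- and $\delta$-actions. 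Getting these phase and eigenvalue assignments exactly right, and confirming the higher multiplicities by the full Verlinde sum rather than by total size alone, is where the real work lies.
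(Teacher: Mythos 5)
Your proposal hinges on the claim that the full $S$-matrix of $V_{L_2}^{S_4}$ is available, so that every coefficient $N_{i,j}^{k}$ can be evaluated by the Verlinde formula. That claim is false, and with it the main engine of your proof collapses. The Appendix together with Lemmas 6.9 and 6.10 determines only the rows (equivalently columns) indexed by $M^{0}$, $M^{3}$, $M^{4}$ and $M^{8}$--$M^{27}$; the block of entries $S_{i,j}$ with $i,j\in\{1,2,5,6,7\}$ is never computed in the paper, and it cannot be recovered from lattice $S$-matrices via the coset decompositions as you assert. The obstruction is structural: $M^{1},M^{2},M^{5},M^{6},M^{7}$ are \emph{proper} direct summands of lattice modules (e.g. $V_{\mathbb{Z}\zeta}\cong M^{0}\oplus M^{2}\oplus M^{3}$ by Corollary 6.6), and the modular transform of a sum of characters does not determine the transforms of the individual summands. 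Even the refinement in Lemma 6.10 (using the $V_{\mathbb{Z}\beta}^{+}$- and $V_{\mathbb{Z}\zeta}^{+}$-orbifold modular data and subtracting) only isolates the rows of $M^{3}$ and $M^{4}$; separating $M^{0}$ from $M^{2}$, or $M^{5},M^{6}$ from $M^{7}$, would require the modular data of the $\rho$-twisted sectors, i.e. exactly the orbifold information that is not known. Since nearly every product in Theorem 7.2 has a factor or a fusion summand among $M^{1},M^{2},M^{5},M^{6},M^{7}$, the Verlinde sum $\sum_{s}S_{i,s}S_{j,s}S_{s,k'}/S_{0,s}$ runs over undetermined entries and cannot be carried out. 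This is precisely why the paper reserves the Verlinde formula for Theorem 7.1 and parts of Theorem 7.3 (where all relevant indices lie in the known range $\{0,8,\dots,27\}$), and proves Theorem 7.2 instead by restriction of intertwining operators, the known fusion rules of $V_{L_2}^{A_4}$ and of the lattice algebras, self-duality, and quantum dimension counting.

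A secondary error: you invoke Theorem 4.4 ``with $U=V_{\mathbb{Z}\zeta}$'' to bound each $N_{i,j}^{k}$ \emph{above} by a lattice fusion number. The inequality runs the other way: the subalgebra in Theorem 4.4 is $U=V_{L_2}^{S_4}\subset V_{\mathbb{Z}\zeta}=V$, and the theorem bounds the fusion numbers of the \emph{larger} algebra by those of the subalgebra. Concretely, nonvanishing lattice (or $V_{L_2}^{A_4}$) fusion rules force summands to appear in the $V_{L_2}^{S_4}$-fusion product --- these are the lower bounds (``must contain'') --- while the upper bound, hence equality, comes from quantum dimensions; your proposal states this logic in reverse. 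Your structural treatment of items $(1)$--$(10)$ via simple currents and your use of quantum dimension multiplicativity do match the paper, but quantum dimensions alone cannot distinguish the equal-dimension summands (e.g. $(V_{\mathbb{Z}\beta}^{-})^{+}$ from $(V_{\mathbb{Z}\beta}^{-})^{-}$, or $(V_{\mathbb{Z}\beta+\frac{\beta}{4}}^{0})^{+}$ from $(V_{\mathbb{Z}\beta+\frac{\beta}{4}}^{0})^{-}$); resolving those signs requires the intertwiner-restriction and self-duality arguments that the paper supplies and your proposal defers to an $S$-matrix you do not have.
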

  \begin{proof}
      We begin with the proof of $(1)$--$(10)$. By Theorem~5.4, $((V_{\mathbb{Z}\beta}^+)^0)^i$ is a simple current, and its tensor product with any irreducible module is again an irreducible $V_{L_2}^{S_4}$-module. By Corollary~6.6, we have
\[
((V_{\mathbb{Z}\beta}^+)^0)^+ \subseteq V_{\mathbb{Z}\zeta}
\quad \text{and} \quad
((V_{\mathbb{Z}\beta}^+)^0)^- \subseteq V_{\mathbb{Z}\zeta+\frac{\zeta}{2}}.
\]
We first show that
\[
((V_{\mathbb{Z}\beta}^+)^0)^+ \boxtimes ((V_{\mathbb{Z}\beta}^+)^0)^+
= ((V_{\mathbb{Z}\beta}^+)^0)^+.
\]
By Theorem~4.4, some irreducible $V_{L_2}^{S_4}$-modules appearing in $V_{\mathbb{Z}\zeta}$ must also appear in $((V_{\mathbb{Z}\beta}^+)^0)^+ \boxtimes ((V_{\mathbb{Z}\beta}^+)^0)^+.$ The equality then follows from a comparison of quantum dimensions. The remaining cases in $(1)$ are proved in a similar manner.

The same method applies to the proofs of $(1)$--$(10)$ when combined with Corollary~6.5, Proposition~6.6, and Lemma~6.7.

      \emph{proof of (11)}: By Theorem~4.4 and the fusion rules for $V_{L_2}^{A_4}$ \cite{DJJJY}, $(V_{\mathbb{Z}\beta}^+)^1$ must be contained in $(V_{\mathbb{Z}\beta}^+)^1 \boxtimes (V_{\mathbb{Z}\beta}^+)^1.$
Since contragredient modules preserve both quantum dimensions and conformal weights, it follows that all irreducible $V_{L_2}^{S_4}$-modules are self-dual; that is,
$M \cong M'
\quad \text{for every irreducible } V_{L_2}^{S_4}\text{-module } M.$ 

It is well known $\mbox{dim }I_{((V_{\mathbb{Z}\beta}^+)^0)^+}\left(\begin{array}{c}M\\
((V_{\mathbb{Z}\beta}^+)^0)^+\,M\end{array}\right)=1$ for all irreducible $V_{L_2}^{S_4}$-modules. Thus $\mbox{dim }I_{((V_{\mathbb{Z}\beta}^+)^0)^+}\left(\begin{array}{c}((V_{\mathbb{Z}\beta}^+)^0)^+\\
M\,M\end{array}\right)=1$. That is to say there is one and only one copy of $((V_{\mathbb{Z}\beta}^+)^0)^+$ in $(V_{\mathbb{Z}\beta}^+)^1\boxtimes (V_{\mathbb{Z}\beta}^+)^1$. By counting quantum dimension, we know $((V_{\mathbb{Z}\beta}^+)^0)^-\in (V_{\mathbb{Z}\beta}^+)^1\boxtimes (V_{\mathbb{Z}\beta}^+)^1$ and get (11).

\emph{proof of (12)}: By the fusion rules for $V_{L_2}^{A_4}$, we have
\[
(V_{\mathbb{Z}\beta}^+)^1 \boxtimes_{A_4} V_{\mathbb{Z}\beta}^- = V_{\mathbb{Z}\beta}^-.
\]
Let $I$ be an intertwining operator of type $I_{V_{L_2}^{A_4}}\!\left(
\begin{array}{c}
V_{\mathbb{Z}\beta}^- \\
(V_{\mathbb{Z}\beta}^+)^1 \; V_{\mathbb{Z}\beta}^-
\end{array}
\right).$
By \cite{DL}, the restriction of $I$ yields a nonzero intertwining operator of type
\[
I_{V_{L_2}^{S_4}}\!\left(
\begin{array}{c}
V_{\mathbb{Z}\beta}^- \\
(V_{\mathbb{Z}\beta}^+)^1 \; (V_{\mathbb{Z}\beta}^-)^j
\end{array}
\right),
\qquad j = 0,1.
\]
Let $v \in (V_{\mathbb{Z}\beta}^-)^j$. Then
\[
\langle u_n v \rangle = V_{\mathbb{Z}\beta}^-
\]
for any $u \in (V_{\mathbb{Z}\beta}^+)^1$ and $n \in \mathbb{Z}$, where $u_n v$ denotes the coefficient of $z^{-n-1}$ in $I(u,z)v$. This follows from the fact that $V_{\mathbb{Z}\beta}^-$ is an irreducible $V_{L_2}^{A_4}$-module. Consequently,
$(V_{\mathbb{Z}\beta}^+)^1 \boxtimes (V_{\mathbb{Z}\beta}^-)^j$ must contain both $(V_{\mathbb{Z}\beta}^-)^+$ and $(V_{\mathbb{Z}\beta}^-)^-$. The equality then follows by comparing the quantum dimensions of both sides.

\emph{proof of (13)}: This follows from theorem 4.4 and fusion rules for $V_{L_2}^{A_4}$.

\emph{proof of (14)}: By fusion rules for $V_{L_2}^{A_4}$, we have $(V_{\mathbb{Z}\beta}^+)^1\boxtimes_{A_4} V_{\mathbb{Z}\beta+\frac{\beta}{4}}^1 \cong V_{\mathbb{Z}\beta+\frac{\beta}{4}}^2$ and $V_{\mathbb{Z}\beta+\frac{\beta}{4}}^0\boxtimes_{A_4}V_{\mathbb{Z}\beta+\frac{\beta}{4}}^1\cong (V_{\mathbb{Z}\beta})^-\oplus (V_{\mathbb{Z}\beta}^+)^1.$ From first identity, we know $V_{\mathbb{Z}\beta+\frac{\beta}{4}}^2\cong V_{\mathbb{Z}\beta+\frac{\beta}{4}}^1$ as $V_{L_2}^{S_4}$-modules is in $(V_{\mathbb{Z}\beta}^+)^1\boxtimes V_{\mathbb{Z}\beta+\frac{\beta}{4}}^1$. From the second identity, we know $1=\mbox{dim }I_{V_{L_2}^{A_4}}\left(\begin{array}{c}(V_{\mathbb{Z}\beta}^+)^1\\
V_{\mathbb{Z}\beta+\frac{\beta}{4}}^0\,V_{\mathbb{Z}\beta+\frac{\beta}{4}}^1\end{array}\right)\leq \mbox{dim }I_{V_{L_2}^{S_4}}\left(\begin{array}{c}(V_{\mathbb{Z}\beta}^+)^1\\
(V_{\mathbb{Z}\beta+\frac{\beta}{4}}^0)^\pm\,V_{\mathbb{Z}\beta+\frac{\beta}{4}}^1\end{array}\right)=\mbox{dim }I_{V_{L_2}^{S_4}}\left(\begin{array}{c}(V_{\mathbb{Z}\beta+\frac{\beta}{4}}^0)^\pm\\
(V_{\mathbb{Z}\beta}^+)^1\,V_{\mathbb{Z}\beta+\frac{\beta}{4}}^1\end{array}\right)$. This means $(V_{\mathbb{Z}\beta+\frac{\beta}{4}}^0)^\pm$ must be in $(V_{\mathbb{Z}\beta}^+)^1\boxtimes V_{\mathbb{Z}\beta+\frac{\beta}{4}}^1 $. Equality follows by counting quantum dimensions.

\emph{proof of (15)}:This follows from theorem 4.4, proposition 6.5, corollary 6.6.

\emph{proof of (16)}: We prove (\ref{7.29}); the proofs of the remaining cases are similar. As a $V_{L_2}^{S_4}$-module, we have
$V_{\mathbb{Z}\gamma+\frac{\gamma}{18}} \cong W_{\delta,1}^{0} \cong W_{\delta^2,1}^{0}$ by \cite{WZ},\cite{DJJJY}. By Theorem~4.4 and the fusion rules for $V_{L_2}^{A_4}$-module, we see that
$V_{\mathbb{Z}\gamma+\frac{5\gamma}{18}}, \;
V_{\mathbb{Z}\gamma+\frac{7\gamma}{18}}
\subseteq
(V_{\mathbb{Z}\beta}^+)^1 \boxtimes V_{\mathbb{Z}\gamma+\frac{\gamma}{18}}.$ The equality then follows from a comparison of quantum dimensions.

\emph{proof of (17)}: By Corollary~6.6, $(V_{\mathbb{Z}\beta}^+)^1$ is contained in both $V_{\mathbb{Z}\zeta}$ and $V_{\mathbb{Z}\zeta+\frac{\zeta}{2}}$. Using the fusion rules for $V_{\mathbb{Z}\zeta}$-modules, together with Theorem~4.4, quantum dimension arguments, and the fact that $V_{\mathbb{Z}\zeta+\frac{(16+s)\zeta}{32}}
\cong
V_{\mathbb{Z}\zeta+\frac{(16-s)\zeta}{32}}$ as $V_{L_2}^{S_4}$-modules, we obtain $(17)$.

\emph{proof of (18)}: This follows from $\epsilon$-identification in proposition 6.7 and technique used in proof of (17).

  \emph{proof of (19)}: By the fusion rules for $V_{L_2}^{A_4}$, we have
\[
V_{\mathbb{Z}\beta}^- \boxtimes_{A_4} V_{\mathbb{Z}\beta}^-
=
(V_{\mathbb{Z}\beta}^+)^0
\oplus
(V_{\mathbb{Z}\beta}^+)^1
\oplus
(V_{\mathbb{Z}\beta}^+)^2
\oplus
2V_{\mathbb{Z}\beta}^- .
\]
By Theorem~4.4, the fusion product
$(V_{\mathbb{Z}\beta}^-)^i \boxtimes (V_{\mathbb{Z}\beta}^-)^j$ must contain at least one irreducible module from $(V_{\mathbb{Z}\beta}^+)^0$ and at least one copy of $(V_{\mathbb{Z}\beta}^+)^1$. Since all irreducible $V_{L_2}^{S_4}$-modules are self-dual, we have
\[
\bigl((V_{\mathbb{Z}\beta}^+)^0\bigr)^{\overline{i+j}}
\subseteq
(V_{\mathbb{Z}\beta}^-)^i \boxtimes (V_{\mathbb{Z}\beta}^-)^j .
\]

Moreover, by Proposition~6.5,
\[
V_{\mathbb{Z}\beta}^{-+} \cong V_{\mathbb{Z}\zeta}^-,
\qquad
V_{\mathbb{Z}\beta}^{--} \cong V_{\mathbb{Z}\zeta+\frac{\zeta}{2}}^-
\]
as $V_{L_2}^{S_4}$-modules. From \cite{ADL}, we have
\begin{equation}
V_{\mathbb{Z}\zeta}^- \boxtimes_{V_{\mathbb{Z}\zeta}^+}
V_{\mathbb{Z}\zeta+\frac{\zeta}{4}}
=
V_{\mathbb{Z}\zeta+\frac{\zeta}{4}} .
\end{equation}

By Corollary~6.6, $(V_{\mathbb{Z}\beta}^-)^+$ is contained in
$V_{\mathbb{Z}\zeta+\frac{\zeta}{4}}$. Applying the same technique as in the proof of~(12), we obtain
\[
(V_{\mathbb{Z}\beta}^-)^+ \oplus (V_{\mathbb{Z}\beta}^-)^-
\subseteq
(V_{\mathbb{Z}\beta}^-)^i \boxtimes (V_{\mathbb{Z}\beta}^-)^j .
\]
Equality follows from comparing the quantum dimensions of both sides.

  \emph{proof of (20)}: By the fusion rules for $V_{L_2}^{A_4}$, the fusion product contains one copy of
$V_{\mathbb{Z}\beta+\frac{\beta}{4}}^1$ and at least one irreducible module from
$V_{\mathbb{Z}\beta+\frac{\beta}{4}}^0$.
By the fusion rules for $V_{\mathbb{Z}\zeta}^+$-modules, there exists an
intertwining operator $I$ of type
\[
I_{V_{\mathbb{Z}\zeta}^+}\!\left(
\begin{array}{c}
V_{\mathbb{Z}\zeta+\frac{3\zeta}{8}}\\
V_{\mathbb{Z}\zeta}^- \quad V_{\mathbb{Z}\zeta+\frac{3\zeta}{8}}
\end{array}
\right).
\]
Restricting $I$, we obtain a nonzero intertwining operator of type
\[
I_{V_{L_2}^{S_4}}\!\left(
\begin{array}{c}
V_{\mathbb{Z}\zeta+\frac{3\zeta}{8}}\\
(V_{\mathbb{Z}\beta}^-)^+ \quad (V_{\mathbb{Z}\beta+\frac{\beta}{4}}^0)^-
\end{array}
\right).
\]
Therefore,
\begin{equation}
(V_{\mathbb{Z}\beta}^-)^+ \boxtimes (V_{\mathbb{Z}\beta+\frac{\beta}{4}}^0)^-
=
(V_{\mathbb{Z}\beta+\frac{\beta}{4}}^0)^-
\oplus
V_{\mathbb{Z}\beta+\frac{\beta}{4}}^1 .
\end{equation}

Replacing $V_{\mathbb{Z}\zeta+\frac{3\zeta}{8}}$ by
$V_{\mathbb{Z}\zeta+\frac{\zeta}{8}}$, we similarly obtain
\begin{equation}
(V_{\mathbb{Z}\beta}^-)^+ \boxtimes (V_{\mathbb{Z}\beta+\frac{\beta}{4}}^0)^+
=
(V_{\mathbb{Z}\beta+\frac{\beta}{4}}^0)^+
\oplus
V_{\mathbb{Z}\beta+\frac{\beta}{4}}^1 .
\end{equation}

On the other hand, consider
\[
\bigl((V_{\mathbb{Z}\beta}^-)^+ \oplus (V_{\mathbb{Z}\beta}^-)^-\bigr)
\boxtimes (V_{\mathbb{Z}\beta+\frac{\beta}{4}}^0)^j .
\]
By the fusion rules for $V_{L_2}^{A_4}$, there exists an intertwining operator
$I$ of type
\[
I_{A_4}\!\left(
\begin{array}{c}
V_{\mathbb{Z}\beta+\frac{\beta}{4}}^0\\
V_{\mathbb{Z}\beta}^- \quad V_{\mathbb{Z}\beta+\frac{\beta}{4}}^0
\end{array}
\right).
\]
Then
\[
I\bigl((V_{\mathbb{Z}\beta}^-)^+ \oplus (V_{\mathbb{Z}\beta}^-)^-, z\bigr)
\bigl((V_{\mathbb{Z}\beta+\frac{\beta}{4}}^0)^j\bigr)
=
(V_{\mathbb{Z}\beta+\frac{\beta}{4}}^0)^+
\oplus
(V_{\mathbb{Z}\beta+\frac{\beta}{4}}^0)^- .
\]
This shows that
$\bigl((V_{\mathbb{Z}\beta}^-)^+ \oplus (V_{\mathbb{Z}\beta}^-)^-\bigr)
\boxtimes (V_{\mathbb{Z}\beta+\frac{\beta}{4}}^0)^j
$ contains both $(V_{\mathbb{Z}\beta+\frac{\beta}{4}}^0)^+$ and
$(V_{\mathbb{Z}\beta+\frac{\beta}{4}}^0)^-$.
Consequently, $(V_{\mathbb{Z}\beta}^-)^+ \boxtimes (V_{\mathbb{Z}\beta+\frac{\beta}{4}}^0)^j
\quad\text{and}\quad
(V_{\mathbb{Z}\beta}^-)^- \boxtimes (V_{\mathbb{Z}\beta+\frac{\beta}{4}}^0)^j$ contain different irreducible $V_{L_2}^{S_4}$-module in
$V_{\mathbb{Z}\beta+\frac{\beta}{4}}^0$.
This proves~\textup{(20)}.

\emph{proof of (21)}:By the fusion rules for $V_{L_2}^{A_4}$ and the techniques used above, we know that $(V_{\mathbb{Z}\beta}^-)^i \boxtimes V_{\mathbb{Z}\beta+\frac{\beta}{4}}^1$
must contain $
(V_{\mathbb{Z}\beta+\frac{\beta}{4}}^0)^+
\oplus
(V_{\mathbb{Z}\beta+\frac{\beta}{4}}^0)^-
\oplus
V_{\mathbb{Z}\beta+\frac{\beta}{4}}^1 .
$ Comparing the quantum dimensions of both sides, we see that there are four
remaining dimensions to be accounted for.

By the fusion rules for $V_{L_2}^{A_4}$, we have
\[
(V_{\mathbb{Z}\beta+\frac{\beta}{4}}^0)^i \boxtimes V_{\mathbb{Z}\beta+\frac{\beta}{4}}^1
\supset
(V_{\mathbb{Z}\beta}^+)^1
\oplus
(V_{\mathbb{Z}\beta}^-)^+
\oplus
(V_{\mathbb{Z}\beta}^-)^- .
\]
This implies
\[
(V_{\mathbb{Z}\beta+\frac{\beta}{4}}^0)^i \boxtimes V_{\mathbb{Z}\beta+\frac{\beta}{4}}^1
=
(V_{\mathbb{Z}\beta}^+)^1
\oplus
(V_{\mathbb{Z}\beta}^-)^+
\oplus
(V_{\mathbb{Z}\beta}^-)^- .
\]

Since all $V_{L_2}^{S_4}$-modules are self-dual, it follows that there is exactly
one copy of $(V_{\mathbb{Z}\beta+\frac{\beta}{4}}^0)^i$, for $i=0,1$, appearing in $
(V_{\mathbb{Z}\beta}^-)^i \boxtimes V_{\mathbb{Z}\beta+\frac{\beta}{4}}^1 .
$
Combining this with the previous calculations, we conclude that
$((V_{\mathbb{Z}\beta}^+)^0)^i$ , $(V_{\mathbb{Z}\beta}^+)^1$, and $(V_{\mathbb{Z}\beta}^-)^i$ do not appear in
$(V_{\mathbb{Z}\beta}^-)^i \boxtimes V_{\mathbb{Z}\beta+\frac{\beta}{4}}^1 .$
Therefore, the only choice is $(V_{\mathbb{Z}\beta}^-)^i \boxtimes V_{\mathbb{Z}\beta+\frac{\beta}{4}}^1$ contains two copies of $V_{\mathbb{Z}\beta+\frac{\beta}{4}}^1$.

\emph{proof of (22)}: We prove $(V_{\mathbb{Z}\beta}^-)^+\boxtimes V_{\mathbb{Z}\beta+\frac{\beta}{8}}^+$ only. Others are similar. By fusion rules for $V_{L_2}^{A_4}$, we know it contains at least one copy of irreducible module from $V_{\mathbb{Z}\beta+\frac{\beta}{8}}$ and 2 copies irreducible modules from $V_{\mathbb{Z}\beta+\frac{3\beta}{8}}$. By proposition 6.5, corollary 6.6, we have $(V_{\mathbb{Z}\beta}^-)^+\subseteq V_{\mathbb{Z}\zeta}, V_{\mathbb{Z}\zeta+\frac{\zeta}{4}}, V_{\mathbb{Z}\zeta+\frac{3\zeta}{4}}$ and $V_{\mathbb{Z}\beta+\frac{\beta}{8}}^+\cong V_{\frac{\zeta}{16}+\mathbb{Z}\zeta}$. Thus (22) follows from fusion rules for $V_{\mathbb{Z}\zeta}$-modules and theorem 4.4.

\emph{proof of (23)}: This directly follows from theorem 4.4 and fusion rules for $V_{L_2}^{A_4}$.

\emph{proof of (24)}: This proof is similar to (17).

\emph{proof of (25)}:By fusion rules for $V_{L_2}^{A_4}$, $(V_{\mathbb{Z}\beta+\frac{\beta}{4}}^0)^i\boxtimes (V_{\mathbb{Z}\beta+\frac{\beta}{4}}^0)^j$ must contain at least one module from $V_{\mathbb{Z}\beta}^-$ and $(V_{\mathbb{Z}\beta}^+)^0$. By self-duality, it is clear $((V_{\mathbb{Z}\beta}^+)^0)^+$ is in the tensor product  if $i=j$ and $((V_{\mathbb{Z}\beta}^+)^0)^-$ is in the tensor product if $i\neq j$. By (20), it is clear $(V_{\mathbb{Z}\beta}^-)^+\subseteq (V_{\mathbb{Z}\beta+\frac{\beta}{4}}^0)^i\boxtimes (V_{\mathbb{Z}\beta+\frac{\beta}{4}}^0)^j$ if $i=j$ and $(V_{\mathbb{Z}\beta}^-)^-\subseteq (V_{\mathbb{Z}\beta+\frac{\beta}{4}}^0)^i\boxtimes (V_{\mathbb{Z}\beta+\frac{\beta}{4}}^0)^j$ if $i\neq j$.

\emph{proof of (26)}: this is already done in proof of (21).

\emph{proof of (27)}:This proof is similar to (17).

\emph{proof of (28)}: This follows from fusion rules for $V_{L_2}^{A_4}$ and theorem 4.4.

\emph{proof of (29)}:This proof is similar to (17).

\emph{proof of (30)}:This proof is similar to (17).

\emph{proof of (31)}:By the fusion rules for $V_{L_2}^{A_4}$ and the techniques used above, the fusion
product contains
\[
((V_{\mathbb{Z}\beta}^+)^0)^+,\quad
((V_{\mathbb{Z}\beta}^+)^0)^-,\quad
(V_{\mathbb{Z}\beta}^-)^+,\quad
(V_{\mathbb{Z}\beta}^-)^- .
\]
By (7.28), the tensor product contains exactly one copy of
$(V_{\mathbb{Z}\beta}^+)^1$.
By (7.40), there are two copies of $(V_{\mathbb{Z}\beta}^-)^i$ for $i=0,1$.
The equality then follows by comparing the quantum dimensions of both sides.

\emph{proof of (32)}: This follows from fusion rules for $V_{L_2}^{A_4}$ and technique used in (12).

Proof of (33) will be given later.

\emph{proof of (34)}:This proof is similar to (17).

\emph{proof of (35)}: The quantum dimension of 
$
(V_{\mathbb{Z}\beta+\tfrac{i\beta}{8}})^j \boxtimes (V_{\mathbb{Z}\beta+\tfrac{k\beta}{8}})^l
$
is equal to $36$.  

For $i=k$, the fusion rules for $V_{L_2}^{A_4}$ give
\[
\begin{aligned}
V_{\mathbb{Z}\beta+\tfrac{i\beta}{8}}\boxtimes_{A_4} V_{\mathbb{Z}\beta+\tfrac{i\beta}{8}}
&=
V_{\mathbb{Z}\beta+\tfrac{\beta}{4}}^0
\oplus V_{\mathbb{Z}\beta+\tfrac{\beta}{4}}^1
\oplus V_{\mathbb{Z}\beta+\tfrac{\beta}{4}}^2
\oplus (V_{\mathbb{Z}\beta}^+)^0
\oplus (V_{\mathbb{Z}\beta}^+)^1
\oplus (V_{\mathbb{Z}\beta}^+)^2 \\
&\quad\oplus V_{\mathbb{Z}\beta}^-
\oplus 2V_{\mathbb{Z}\beta+\tfrac{\beta}{8}}
\oplus 2V_{\mathbb{Z}\beta+\tfrac{3\beta}{8}},
\end{aligned}
\]
and
\[
\begin{aligned}
V_{\mathbb{Z}\beta+\tfrac{\beta}{8}}\boxtimes_{A_4} V_{\mathbb{Z}\beta+\tfrac{3\beta}{8}}
&=
V_{\mathbb{Z}\beta+\tfrac{\beta}{4}}^0
\oplus V_{\mathbb{Z}\beta+\tfrac{\beta}{4}}^1
\oplus V_{\mathbb{Z}\beta+\tfrac{\beta}{4}}^2
\oplus 2V_{\mathbb{Z}\beta}^- \\
&\quad\oplus 2V_{\mathbb{Z}\beta+\tfrac{\beta}{8}}
\oplus 2V_{\mathbb{Z}\beta+\tfrac{3\beta}{8}}.
\end{aligned}
\]

The restriction of quantum dimensions implies that there is exactly one irreducible
$V_{L_2}^{S_4}$-module arising from each of the following
$V_{L_2}^{A_4}$-modules if $i=k$:
\[
V_{\mathbb{Z}\beta+\tfrac{\beta}{4}}^0,\;
V_{\mathbb{Z}\beta+\tfrac{\beta}{4}}^1,\;
(V_{\mathbb{Z}\beta}^+)^0,\;
(V_{\mathbb{Z}\beta}^+)^1,\;
V_{\mathbb{Z}\beta}^-,
\]
and two irreducible $V_{L_2}^{S_4}$-modules arising from each of
$V_{\mathbb{Z}\beta+\tfrac{\beta}{8}}$ and $V_{\mathbb{Z}\beta+\tfrac{3\beta}{8}}$.

To determine whether $(V_{\mathbb{Z}\beta+\tfrac{\beta}{4}}^0)^+$ or
$(V_{\mathbb{Z}\beta+\tfrac{\beta}{4}}^0)^-$ appears in the tensor product,
we use our calculation in (27).
We only compute
$
V_{\mathbb{Z}\beta+\tfrac{\beta}{8}}^+ \boxtimes V_{\mathbb{Z}\beta+\tfrac{\beta}{8}}^+.
$
Since
\[
(V_{\mathbb{Z}\beta+\tfrac{\beta}{4}}^0)^+ \boxtimes (V_{\mathbb{Z}\beta+\tfrac{\beta}{8}})^+
=
V_{\mathbb{Z}\beta+\tfrac{3\beta}{8}}^+ \oplus V_{\mathbb{Z}\beta+\tfrac{\beta}{8}}^+,
\]
by self-duality we conclude that
$
(V_{\mathbb{Z}\beta+\tfrac{\beta}{8}})^+ \boxtimes (V_{\mathbb{Z}\beta+\tfrac{\beta}{8}})^+
$
contains $(V_{\mathbb{Z}\beta+\tfrac{\beta}{4}}^0)^+$.  
The other cases are similar.

Applying the same technique to (22), we also see that the tensor product contains
$(V_{\mathbb{Z}\beta}^-)^{\overline{j+l}}$ if $i=k$, and
$(V_{\mathbb{Z}\beta}^-)^+ \oplus (V_{\mathbb{Z}\beta}^-)^-$ otherwise.

By the previous arguments, it is clear that
\[
((V_{\mathbb{Z}\beta}^+)^0)^{\overline{j+l}},\quad
V_{\mathbb{Z}\beta+\tfrac{\beta}{4}}^1,\quad
(V_{\mathbb{Z}\beta}^+)^1
\]
all appear in the tensor product.

It remains to determine the summands related to
$V_{\mathbb{Z}\beta+\tfrac{\beta}{8}}$ and
$V_{\mathbb{Z}\beta+\tfrac{3\beta}{8}}$.
By \cite{ADL}, there exists an intertwining operator
$
I \in I_{V_{\mathbb{Z}\zeta}^+}\!\left(
\begin{array}{c}
V_{\mathbb{Z}\zeta}^{T_1,+} \\
V_{\mathbb{Z}\zeta+\tfrac{\zeta}{16}} \quad V_{\mathbb{Z}\zeta}^{T_1,+}
\end{array}
\right).
$
Using the technique from (12), we obtain that
$
V_{\mathbb{Z}\beta+\tfrac{\beta}{8}}^+ \boxtimes V_{\mathbb{Z}\beta+\tfrac{\beta}{8}}^+
$
contains
$V_{\mathbb{Z}\beta+\tfrac{\beta}{8}}^+ \oplus V_{\mathbb{Z}\beta+\tfrac{\beta}{8}}^-$,
and
$
V_{\mathbb{Z}\beta+\tfrac{\beta}{8}}^+ \boxtimes V_{\mathbb{Z}\beta+\tfrac{\beta}{8}}^-
$
also contains
$V_{\mathbb{Z}\beta+\tfrac{\beta}{8}}^+ \oplus V_{\mathbb{Z}\beta+\tfrac{\beta}{8}}^-$.
The remaining cases are similar.

Combining all these results, we obtain $(7.55)-(7.57)$.

\emph{proof of (36)}: (\ref{7.57}-\ref{7.60}) follows directly from the S-matrix and previous calculations. We prove (\ref{7.61}-\ref{7.69}). The summands consisting of modules of the form
$V_{\mathbb{Z}\gamma+\frac{i\gamma}{18}}$
are determined by the $S$-matrix.
 By fusion rules for $V_{L_2}^{A_4}$, we know the tensor product must contain at least one copy of $V_{\mathbb{Z}\beta+\frac{\beta}{4}}^1,V_{\mathbb{Z}\beta+\frac{\beta}{8}}^{\pm},$ and $V_{\mathbb{Z}\beta+\frac{3\beta}{8}}^{\pm}$. After counting quantum dimensions, a quantum dimension of 4 remains. From fusion rules for $V_{L_2}^{A_4}$, the tensor product contains $(V_{\mathbb{Z}\beta+\frac{\beta}{4}}^0)^+,(V_{\mathbb{Z}\beta+\frac{\beta}{4}}^0)^-$ unless $(k=0,l=2), (k=l=1), (k=2,l=0)$. For these three cases, we know $((V_{\mathbb{Z}\beta}^+)^0)^{\pm},(V_{\mathbb{Z}\beta}^+)^1, (V_{\mathbb{Z}\beta}^-)^{\pm},(V_{\mathbb{Z}\beta+\frac{\beta}{4}}^0)^{\pm}$ are not in the tensor product from those fusion rules that are already calculated. Thus, we have no other choice.

\emph{proof of (33)}: Use Proposition 6.7, we know $V_{\mathbb{Z}\beta+\frac{\beta}{4}}^1\boxtimes V_{\mathbb{Z}\gamma+\frac{r\gamma}{18}}$ must contain $V_{\mathbb{Z}\gamma+\frac{2\gamma}{18}}\oplus V_{\mathbb{Z}\gamma+\frac{4\gamma}{18}} \oplus V_{\mathbb{Z}\gamma+\frac{8\gamma}{18}}$ if $r$ is odd and $V_{\mathbb{Z}\beta+\frac{\beta}{4}}^1\boxtimes V_{\mathbb{Z}\gamma+\frac{r\gamma}{18}}$ must contain $V_{\mathbb{Z}\gamma+\frac{\gamma}{18}}\oplus V_{\mathbb{Z}\gamma+\frac{5\gamma}{18}} \oplus V_{\mathbb{Z}\gamma+\frac{7\gamma}{18}}$ if $r$ is even. By (36) and self-duality, we can get (33). Equality follows from counting quantum dimensions.

\end{proof}

For $1 \le i,j \le 15$ with $i ,j \neq 0 \pmod{2}$, we divide the tensor product
$
V_{\mathbb{Z}\zeta+\frac{i\zeta}{32}} \boxtimes V_{\mathbb{Z}\zeta+\frac{j\zeta}{32}}
$
into three parts.

We call all irreducible modules belonging to $M_{12}$--$M_{17}$ \emph{Part A}, and all irreducible modules belonging to $M_{8}$--$M_{11}$ \emph{Part B}. All remaining irreducible modules are said to belong to \emph{Part C}.

This decomposition will be convenient for describing the fusion rules for
$
V_{\mathbb{Z}\zeta+\frac{s\zeta}{32}} \boxtimes V_{\mathbb{Z}\zeta+\frac{s\zeta}{32}}.
$
For brevity, we will use $(i,j)$ to denote the fusion product $M_i \boxtimes M_j$. The full fusion rules are obtained by adding \emph{Part A}, \emph{Part B}, and \emph{Part C}.

\begin{theorem}

    Part A is as follows: 
    \begin{eqnarray}
        & &(18,18)=(18,21)=(18,22)=(18,25)=(19,19)=(19,20)=(19,23)=(19,24)\nonumber\\
        & &(20,20)=(20,23)=(20,24)=(21,21)=(21,22)=(21,25)=(22,22)=(22,25)\nonumber\\
        & &(23,23)=(23,24)=(24,24)=(25,25)=M_{12}\oplus M_{15}\oplus M_{16}.
    \end{eqnarray}
    Other pairs have part A= $M_{13}\oplus M_{14}\oplus M_{17}$.

    Here is part B:
    \begin{eqnarray}
       & & (18,18)=(18,19)=(19,20)=(20,21)=(21,22)=(22,23)=(23,24)=(24,25)=(25,25)\nonumber\\
        & &=M_8.\nonumber\\
        & &(18,24)=(18,25)=(19,23)=(19,25)=(20,22)=(20,24)=(21,21)=(21,23)=(22,22)\nonumber\\
        & &=M_9.\nonumber\\
        & &(18,20)=(18,21)=(19,19)=(19,22)=(20,23)=(21,24)=(22,25)=(23,25)=(24,24)\nonumber\\
        & &=M_{10}.\nonumber\\
        & &(18,22)=(18,23)=(19,21)=(19,24)=(20,25)=(20,20)=(21,25)=(22,24)=(23,23)\nonumber\\
        & &=M_{11}.\nonumber\\
    \end{eqnarray}

Here is part C:
\begin{eqnarray}
    & &(18,18)=(19,19)=(20,20)=(21,21)=(22,22)=(23,23)=(24,24)=(25,25)\nonumber\\
    & &=((V_{\mathbb{Z}\beta}^+)^0)^+\oplus (V_{\mathbb{Z}\beta}^+)^1
\oplus (V_{\mathbb{Z}\beta}^-)^+.\nonumber\\
& &(18,19)=(18,20)=(19,21)=(20,22)=(21,23)=(22,24)=(23,25)=(24,25)\nonumber\\
& &=(V_{\mathbb{Z}\beta+\frac{\beta}{4}}^0)^+\oplus V_{\mathbb{Z}\beta+\frac{\beta}{4}}^1.\nonumber\\
& &(18,21)=(18,22)=(19,20)=(19,23)=(20,24)=(21,25)=(22,25)=(23,24)\nonumber\\
& &=(V_{\mathbb{Z}\beta}^-)^+\oplus (V_{\mathbb{Z}\beta}^-)^-.\nonumber\\
& &(18,23)=(18,24)=(19,22)=(19,25)=(20,21)=(20,25)=(21,24)=(22,23)\nonumber\\
& &=(V_{\mathbb{Z}\beta+\frac{\beta}{4}}^0)^-\oplus V_{\mathbb{Z}\beta+\frac{\beta}{4}}^1.\nonumber\\
& &(18,25)=(19,24)=(20,23)=(21,22)=((V_{\mathbb{Z}\beta}^+)^0)^-\oplus (V_{\mathbb{Z}\beta}^+)^1
\oplus (V_{\mathbb{Z}\beta}^-)^-.\nonumber\\
\end{eqnarray} 
\end{theorem}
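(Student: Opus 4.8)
The plan is to use the fact that, by Lemma~6.7, the modules $M^{18},\dots,M^{25}$ are precisely the eight irreducible $V_{\mathbb{Z}\zeta}$-modules $V_{\mathbb{Z}\zeta+\frac{s\zeta}{32}}$ with $s$ odd, where two cosets $s$ and $32-s$ restrict to the \emph{same} $V_{L_2}^{S_4}$-module (this is the self-duality $V_{\mathbb{Z}\zeta+\frac{s}{32}\zeta}\cong V_{\mathbb{Z}\zeta+\frac{32-s}{32}\zeta}$). By the multiplicativity of quantum dimension (Theorem~5.2), each product $V_{\mathbb{Z}\zeta+\frac{i\zeta}{32}}\boxtimes V_{\mathbb{Z}\zeta+\frac{j\zeta}{32}}$ has total quantum dimension $6\cdot 6=36$. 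Since Part~A has quantum dimension $24$ and Parts~B and~C each have quantum dimension $6$, establishing that each listed summand genuinely occurs already forces the decomposition to be exactly as stated. I would therefore separate the argument into a ``lattice-restriction'' step that produces Parts~B and~C, and an ``$S$-matrix'' step that produces Part~A.

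For Parts~B and~C I would exploit that fusion over the rank-one lattice algebra $V_{\mathbb{Z}\zeta}$ is just coset addition, $V_{\mathbb{Z}\zeta+\frac{a\zeta}{32}}\boxtimes_{V_{\mathbb{Z}\zeta}}V_{\mathbb{Z}\zeta+\frac{b\zeta}{32}}=V_{\mathbb{Z}\zeta+\frac{(a+b)\zeta}{32}}$. Because each factor has the two $V_{\mathbb{Z}\zeta}$-realisations $s$ and $32-s$, the four sums collapse (up to sign) to the two cosets $s_i+s_j$ and $s_i-s_j$; since $s_i,s_j$ are odd these two residues differ by $2\pmod 4$, so exactly one is $\equiv 2\pmod 4$ and the other $\equiv 0\pmod 4$. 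Restricting a nonzero $V_{\mathbb{Z}\zeta}$-intertwining operator along $V_{L_2}^{S_4}\subset V_{\mathbb{Z}\zeta}$ via Theorem~4.4, the $\equiv 2\pmod 4$ coset is identified by Proposition~6.5 with a single $M^8,\dots,M^{11}$ (Part~B), while the $\equiv 0\pmod 4$ coset is decomposed by Corollary~6.6 and Proposition~6.5 into untwisted modules $M^0,\dots,M^7$ (Part~C). These two contributions already account for quantum dimension $6+6=12$; the exact member of each family, including the $\pm$ eigenspaces, is then fixed by comparing conformal weights exactly as in Section~6.

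For Part~A I would invoke the Verlinde formula (Theorem~4.3) with the $S$-matrix entries computed in Lemma~6.9 (tabulated in the Appendix). For $j\in\{18,\dots,25\}$ one has $S_{j,k}=\frac{1}{\sqrt{32}}\sum_i s_i\,e^{-2\pi i\langle\lambda_l,\lambda_{k_i}\rangle}$, where $M^k$ occurs with multiplicity $s_i$ in $V_{\mathbb{Z}\zeta+\lambda_{k_i}}$ according to the Section~6 decompositions; substituting these into $N_{i,j}^k=\sum_s S_{i,s}S_{j,s}S_{s,k}^{-1}/S_{0,s}$ gives the multiplicity of each $V_{\mathbb{Z}\gamma+\frac{r\gamma}{18}}$ in the product. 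Using the identifications \eqref{7.9}--\eqref{7.14} between the $W_{\delta,\cdot}^{\cdot}$ and the $V_{\mathbb{Z}\gamma+\frac{r\gamma}{18}}$, together with self-duality, this pins down which three of the six modules $M^{12},\dots,M^{17}$ appear and that each appears with multiplicity one. Completeness is then confirmed by the count $24+6+6=36$, which in particular rules out any occurrence of $M^{26},M^{27}$; as an independent check one can test the output against associativity using the products of Theorem~7.2, e.g. by comparing $(M^8\boxtimes M^i)\boxtimes M^j$ with $M^8\boxtimes(M^i\boxtimes M^j)$.

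The main obstacle is Part~A. Since $M^{12},\dots,M^{17}$ all have quantum dimension $8$, dimension counting cannot distinguish them, so one is forced to evaluate the root-of-unity sums in the Verlinde formula exactly and to carry the phases $e^{-2\pi i\langle\lambda_l,\lambda_{k_i}\rangle}$ correctly through the multiplicities $s_i$ arising from the Section~6 decompositions. Maintaining this bookkeeping uniformly across all $\binom{8}{2}+8=36$ unordered pairs $(i,j)$, while simultaneously resolving the $\pm$ and index ambiguities in Parts~B and~C by conformal weight, is where the genuine work lies; the organisation into Parts~A, B and~C is precisely what keeps these three sources of summands from being conflated.
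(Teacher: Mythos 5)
Your proposal is correct, but it reaches Parts B and C by a genuinely different route than the paper. The paper's proof is two sentences: Parts A \emph{and} B are read off from the $S$-matrix via the Verlinde formula (Theorem 4.3 with the Appendix entries), while Part C is obtained from self-duality of all irreducible $V_{L_2}^{S_4}$-modules together with the fusion rules already established in Theorem 7.2 --- the symmetry $N_{i,j}^{k}=N_{i,k}^{j}$ turns each Part C multiplicity into a multiplicity of $M^{j}$ in $M^{k}\boxtimes M^{i}$, $k\in\{0,\dots,7\}$, which was computed in items (8), (9), (17), (24), (29), (34) of Theorem 7.2. You instead obtain Parts B and C simultaneously from the lattice structure: using the two $V_{\mathbb{Z}\zeta}$-realisations $V_{\mathbb{Z}\zeta+\frac{s\zeta}{32}}$ and $V_{\mathbb{Z}\zeta+\frac{(32-s)\zeta}{32}}$ of each $M^{18},\dots,M^{25}$, coset addition produces the two target cosets $s_i\pm s_j$, exactly one of which is $\equiv 2\pmod 4$ (a single module among $M^{8},\dots,M^{11}$ by Proposition 6.5 --- Part B) and the other $\equiv 0\pmod 4$ (decomposed into $M^{0},\dots,M^{7}$ by Corollary 6.6 --- Part C); restricting the nonzero $V_{\mathbb{Z}\zeta}$-intertwining operators along $V_{L_2}^{S_4}\subset V_{\mathbb{Z}\zeta}$ and projecting onto irreducible summands shows every listed summand occurs, and the count $24+6+6=36$ then forces equality and rules out $M^{26},M^{27}$. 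This is legitimate --- it is the same restriction technique (from [DL]) that the paper itself uses in proving (12), (17), (22) of Theorem 7.2 --- and it buys independence from Theorem 7.2 and from the Part B $S$-matrix computation, while making the structural origin of Parts B and C (sum and difference cosets) transparent; what it cannot do is touch Part A, where, as you note and as the paper does, the Verlinde formula is unavoidable because the six candidates $M^{12},\dots,M^{17}$ all have quantum dimension $8$. Two small inaccuracies worth fixing: the identification of $M^{18},\dots,M^{25}$ with the odd cosets, irreducible both as $V_{L_2}^{S_4}$- and as $V_{\mathbb{Z}\zeta}$-modules, is Theorem 6.3 together with Lemma 6.8 (not Lemma 6.7); and in the Part A bookkeeping one should observe that $S_{j,k}=0$ for $j\in\{18,\dots,25\}$, $k\in\{12,\dots,17\}$, so the Verlinde sum $N_{i,j}^{k}=\sum_{s}S_{i,s}S_{j,s}S_{s,k}^{-1}/S_{0,s}$ effectively runs only over $s\in\{0,\dots,7\}$, which is what makes it computable from the Appendix tables.
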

\begin{proof}
    Part A and Part B directly follows from S matrix. Part C follows from self-duality and calculations in Theorem 7.2.
\end{proof}
\begin{theorem}
    \begin{equation}
             (1).V_{\mathbb{Z}\beta+\frac{\beta}{4}}^1\boxtimes V_{\mathbb{Z}\zeta}^{T_2,i}=2V_{\mathbb{Z}\zeta}^{T_2,+}\oplus 2V_{\mathbb{Z}\zeta}^{T_2,-}.\label{7.54}(i=0,1)
\end{equation}
    \begin{eqnarray}
        (2).& &V_{\mathbb{Z}\zeta}^{T_2,i}\boxtimes V_{\mathbb{Z}\zeta}^{T_2,j}=(\bigoplus_{k=1,k\neq 0\mbox { mod }3}^{8}2V_{\mathbb{Z}\gamma+\frac{k\gamma}{18}})\oplus (\bigoplus_{s=1,3,t=0,1}V_{\mathbb{Z}\beta+\frac{s\beta}{8}}^t)\oplus (V_{\mathbb{Z}\beta+\frac{\beta}{4}}^0)^+\oplus (V_{\mathbb{Z}\beta+\frac{\beta}{4}}^0)^-\nonumber\\
        & &\oplus ((V_{\mathbb{Z}\beta}^+)^0)^{\overline{i+j}}
        \oplus (V_{\mathbb{Z}\beta}^+)^1\oplus 2V_{\mathbb{Z}\beta+\frac{\beta}{4}}^1\oplus 2(V_{\mathbb{Z}\beta}^-)^{\overline{i+j+1}}\oplus (V_{\mathbb{Z}\beta}^-)^{\overline{i+j}}.(i,j=0,1)
    \end{eqnarray}

\end{theorem}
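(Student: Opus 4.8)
The plan is to run the Verlinde formula (Theorem~4.3) as the main engine, since every hypothesis holds: $V_{L_2}^{S_4}$ is simple, rational and $C_2$-cofinite of CFT type by \cite{WZ}, and all of its irreducible modules were shown to be self-dual in the proof of Theorem~7.2. Self-duality gives $S^{-1}_{s,k}=S_{k,s}$, so the fusion numbers collapse to $N_{i,j}^{k}=\sum_{s=0}^{27}S_{i,s}S_{j,s}S_{k,s}/S_{0,s}$, and the rows of the $S$-matrix meeting the modules in play---$M^{7}=V_{\mathbb{Z}\beta+\frac{\beta}{4}}^{1}$ and the twisted modules $M^{26},M^{27}=V_{\mathbb{Z}\zeta}^{T_2,\pm}$---are exactly those supplied by Lemmas~6.9 and~6.10 and the Appendix. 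To organize the (long) sums I would first use the twisted-sector grading of the $S_4$-orbifold: by Lemma~6.8 the modules $M^{26},M^{27}$ arise from $\rho$-twisted $V_{L_2}$-modules and hence live in the transposition sector $[\rho]$, while $M^{7}$ is untwisted, so the Verlinde coefficients are expected to vanish outside the sectors compatible with these labels (a fact confirmed directly by the vanishing of the corresponding terms in the Verlinde sum).

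For part~(1), $M^{7}\boxtimes M^{26}$ lies in the sector $[1\cdot\rho]=[\rho]$, whose only irreducibles are $M^{26},M^{27}$; thus the product is a combination of these two of total quantum dimension $4\cdot 12=48=2\cdot 12+2\cdot 12$. To see that both occur I would pass to the rank-one lattice vertex operator algebra $V_{\mathbb{Z}\epsilon}$ (with $(\epsilon,\epsilon)=8$, so $V_{L_2}^{S_4}\subseteq V_{\mathbb{Z}\epsilon}$ share a Virasoro element): by Lemma~6.8, $M^{26}\cong V_{\mathbb{Z}\epsilon+\frac{\epsilon}{8}}$ and $M^{27}\cong V_{\mathbb{Z}\epsilon+\frac{3\epsilon}{8}}$, while $M^{7}$ is a summand of $V_{\mathbb{Z}\epsilon+\frac{\epsilon}{4}}$ by~(\ref{6.28}). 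Applying Theorem~4.4 to the rank-one lattice fusions $V_{\mathbb{Z}\epsilon+\frac{\epsilon}{4}}\boxtimes V_{\mathbb{Z}\epsilon+\frac{\epsilon}{8}}=V_{\mathbb{Z}\epsilon+\frac{3\epsilon}{8}}$ and $V_{\mathbb{Z}\epsilon+\frac{3\epsilon}{4}}\boxtimes V_{\mathbb{Z}\epsilon+\frac{\epsilon}{8}}=V_{\mathbb{Z}\epsilon+\frac{7\epsilon}{8}}\cong M^{26}$ produces nonzero $V_{L_2}^{S_4}$-intertwining operators witnessing $M^{27}\subseteq M^{7}\boxtimes M^{26}$ and $M^{26}\subseteq M^{7}\boxtimes M^{26}$; the Verlinde values then fix both multiplicities at $2$. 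The identical argument with $\frac{3\epsilon}{8}$ in place of $\frac{\epsilon}{8}$ handles $M^{27}$, giving independence of $i$.

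For part~(2), both factors lie in $[\rho]$, so $M^{26}\boxtimes M^{27}$ is supported on the sectors produced by products of two transpositions---the untwisted, $3$-cycle, and double-transposition sectors---which together are exactly $\{M^{0},\dots,M^{17}\}$; in particular none of $M^{18},\dots,M^{27}$ can occur. I would then evaluate $N^{k}_{26,27}$ for $0\le k\le 17$ directly from the Verlinde formula, corroborating the list of summands by realizing both factors inside $V_{\mathbb{Z}\epsilon}$ (Lemma~6.8), taking lattice fusions such as $V_{\mathbb{Z}\epsilon+\frac{\epsilon}{8}}\boxtimes V_{\mathbb{Z}\epsilon+\frac{3\epsilon}{8}}=V_{\mathbb{Z}\epsilon+\frac{\epsilon}{2}}$, decomposing via~(\ref{6.27})--(\ref{6.30}) and the $\gamma$-module identifications~(\ref{7.9})--(\ref{7.14}), and using Theorem~4.4 for lower bounds. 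The parity labels on $((V_{\mathbb{Z}\beta}^+)^0)^{\overline{i+j}}$, $(V_{\mathbb{Z}\beta}^-)^{\overline{i+j}}$ and $2(V_{\mathbb{Z}\beta}^-)^{\overline{i+j+1}}$ I would pin down by tracking the $\psi(\rho)$-eigenvalue through the identifications $M^{26},M^{27}\cong V_{\mathbb{Z}\epsilon+\frac{\epsilon}{8}},V_{\mathbb{Z}\epsilon+\frac{3\epsilon}{8}}$, exactly as in the $\overline{i+j}$-bookkeeping of Theorem~7.2, with the global quantum-dimension identity $12\cdot 12=144$ (Theorem~5.2) as the final consistency check.

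The hard part is the exact multiplicities, not the list of summands. Quantum dimension cannot separate, for instance, $2(V_{\mathbb{Z}\beta}^-)^{\overline{i+j+1}}\oplus(V_{\mathbb{Z}\beta}^-)^{\overline{i+j}}$ from the reversed assignment, nor a single from a doubled copy of a $\gamma$-module or of $V_{\mathbb{Z}\beta+\frac{\beta}{4}}^{1}$, since the competing candidates have equal quantum dimension; only the explicit twisted $S$-matrix entries entering the Verlinde sum resolve these. The most error-prone step is propagating the $\psi(\rho)$-eigenspace signs correctly, so as to obtain the genuine $\overline{i+j}$-dependence rather than an unsigned sum over both signs.
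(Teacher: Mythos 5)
Your toolkit (Verlinde formula, Theorem~4.4 restriction from $V_{\mathbb{Z}\epsilon}$, quantum dimensions) is the right family of tools, and your lattice-fusion lower bounds and support analysis are sound. But there is a genuine gap exactly at the point you yourself flag as decisive: you say the delicate multiplicities --- the $\overline{i+j}$-dependent terms and the coefficient $2$ on $V_{\mathbb{Z}\beta+\frac{\beta}{4}}^{1}$ --- are ``resolved only by the explicit twisted $S$-matrix entries entering the Verlinde sum,'' but those Verlinde sums cannot be evaluated from the $S$-matrix data the paper provides. The Appendix together with Lemmas~6.9 and~6.10 supplies only entries $S_{j,k}$ in which at least one index lies in $\{0\}\cup\{3,4\}\cup\{8,\dots,27\}$; the block of entries $S_{j,k}$ with both $j,k\in\{1,2,5,6,7\}$ is never computed. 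Since $S_{26,s}$ is supported on $s\in\{0,1,3,4,26,27\}$ --- in particular $S_{26,1}\neq 0$ --- every Verlinde sum $N_{26,26}^{k}$, $N_{26,27}^{k}$, $N_{7,26}^{k}$ with target $k\in\{1,\dots,7\}$ contains the unknown factor $S_{1,k}$ (and $N_{7,26}^{k}$ also needs the unknown $S_{7,1}$). Those targets $M^{1},\dots,M^{7}$ are precisely the modules carrying the parity structure you must pin down, so the engine stalls exactly where you need it. To proceed your way you would first have to produce the missing column $s=1$ (e.g.\ from the simple-current property of $((V_{\mathbb{Z}\beta}^{+})^{0})^{-}$), which is additional work you do not sketch.

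The paper closes this differently, and in the opposite order from yours: it proves (2) first, obtaining the multiplicities of $M^{0},\dots,M^{6}$ not from a raw Verlinde sum but from the symmetry of fusion coefficients, $N_{26,26}^{k}=N_{k,26}^{26}$ (valid since all irreducibles are self-dual), read off from the already established products $M^{k}\boxtimes M^{26}$ in Theorems~7.1 and~7.2; the $S$-matrix is invoked only for the $\gamma$- and $\frac{\beta}{8}$-parts ($M^{8}$--$M^{17}$), where every needed entry is available. The single remaining multiplicity is then forced by quantum dimensions ($144-136=8$, hence $2V_{\mathbb{Z}\beta+\frac{\beta}{4}}^{1}$), and part (1) is deduced from part (2) via $N_{7,26}^{26}=N_{26,26}^{7}=2$, $N_{7,26}^{27}=N_{26,27}^{7}=2$ and a quantum-dimension count. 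Your plan proves (1) before (2), so it cannot borrow these multiplicities from (2) and leans entirely on the unavailable $S$-matrix entries. Either adopt the symmetry-plus-prior-theorems route, or supply the missing entries; as written, the proposal does not close.
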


\begin{proof}
    Let us prove (2) first. $V_{\mathbb{Z}\zeta}^{T_2,i}\boxtimes V_{\mathbb{Z}\zeta}^{T_2,j}$ contains 
    \begin{align}
        (\bigoplus_{k=1,k\neq 0\mbox { mod }3}^{8}2V_{\mathbb{Z}\gamma+\frac{k\gamma}{18}})\oplus (\bigoplus_{s=1,3,t=0,1}V_{\mathbb{Z}\beta+\frac{s\beta}{8}}^t)
    \end{align}
    by S-matrix. By Theorem 7.1, Theorem 7.2, $V_{\mathbb{Z}\zeta}^{T_2,i}\boxtimes V_{\mathbb{Z}\zeta}^{T_2,j}$ contains exactly one copy of $(V_{\mathbb{Z}\beta+\frac{\beta}{4}}^0)^+,(V_{\mathbb{Z}\beta+\frac{\beta}{4}}^0)^-,((V_{\mathbb{Z}\beta}^+)^0)^{\overline{i+j}},(V_{\mathbb{Z}\beta}^+)^1,2(V_{\mathbb{Z}\beta}^-)^{\overline{i+j+1}},(V_{\mathbb{Z}\beta}^-)^{\overline{i+j}}$. By counting quantum dimensions, a quantum dimension of 8 remains. So, the only remaining summand is $2V_{\mathbb{Z}\beta+\frac{\beta}{4}}^1$. This proves (2).

    (1) follows from (2).
\end{proof}

\section{Appendix}
\def\theequation{6.\arabic{equation}}
\setcounter{equation}{0}

For convenience, in the following diagram, we will use $W_j$ to denote $\frac{4}{3}(e^{\frac{j\pi i }{9}}+e^{-\frac{j\pi i }{9}})$ for $j=1,2,4,5,7,8$ and $T_j$ to denote $e^{\frac{j\pi i}{8}}+e^{\frac{-j\pi i}{8}}$ for $j= 1,3,5,7$ and $P_j$ to denote $e^{\frac{j\pi i}{16}}+e^{\frac{-j\pi i}{16}}$ for $j= 1,3,5,7,9,11,13,15$. The following is the part of $S$-matrix for irreducible $V_{L_{2}}^{S_{4}}$-modules
that we need:

\begin{tabular}{|c|c|c|c|c|c|c|c|c|c|c|c|c|}
\hline
$\sqrt{32}S_{i,j}$ & 0 & 8 & 9 & 10 & 11 & 12 & 13 & 14 & 15 & 16 & 17\tabularnewline
\hline
\hline
0 & $\frac{1}{6}$ & 1 & 1 & 1 & 1 & $\frac{4}{3}$ &  $\frac{4}{3}$ & $\frac{4}{3}$ & $\frac{4}{3}$ & $\frac{4}{3}$ & $\frac{4}{3}$ \tabularnewline
\hline
1 & $\frac{1}{6}$ & 1 & 1 & 1 & 1 & $\frac{4}{3}$ &  $\frac{4}{3}$ & $\frac{4}{3}$ & $\frac{4}{3}$ & $\frac{4}{3}$ & $\frac{4}{3}$ \tabularnewline
\hline
2 & $\frac{1}{3}$ & 2 & 2 & 2 & 2 & $-\frac{4}{3}$ &  $-\frac{4}{3}$ & $-\frac{4}{3}$ & $-\frac{4}{3}$ & $-\frac{4}{3}$ & $-\frac{4}{3}$ \tabularnewline
\hline
3 & $\frac{1}{2}$ & -1 & -1 & -1 & -1 & $0$ &  $0$ & $0$ & $0$ & $0$ & $0$ \tabularnewline
\hline
4 & $\frac{1}{2}$ & -1 & -1 & -1 & -1 & $0$ &  $0$ & $0$ & $0$ & $0$ & $0$ \tabularnewline
\hline
5 & $\frac{1}{3}$ & 0 & 0 & 0 & 0 & $\frac{4}{3}$ &  $-\frac{4}{3}$ & $-\frac{4}{3}$ & $\frac{4}{3}$ & $\frac{4}{3}$ & $-\frac{4}{3}$ \tabularnewline
\hline
6 & $\frac{1}{3}$ & 0 & 0 & 0 & 0 & $\frac{4}{3}$ &  $-\frac{4}{3}$ & $-\frac{4}{3}$ & $\frac{4}{3}$ & $\frac{4}{3}$ & $-\frac{4}{3}$ \tabularnewline
\hline
7 & $\frac{2}{3}$ & 0 & 0 & 0 & 0 & $-\frac{4}{3}$ &  $\frac{4}{3}$ & $\frac{4}{3}$ & $-\frac{4}{3}$ & $-\frac{4}{3}$ & $\frac{4}{3}$ \tabularnewline
\hline
8 & $1$ & $\sqrt{2}$ & $\sqrt{2}$ & $-\sqrt{2}$ & $-\sqrt{2}$ & $0$ &  $0$ & $0$ & $0$ & $0$ & $0$ \tabularnewline
\hline
9 & $1$ & $\sqrt{2}$ & $\sqrt{2}$ & $-\sqrt{2}$ & $-\sqrt{2}$ & $0$ &  $0$ & $0$ & $0$ & $0$ & $0$ \tabularnewline
\hline
10 & $1$ & $-\sqrt{2}$ & $-\sqrt{2}$ & $\sqrt{2}$ & $\sqrt{2}$ & $0$ &  $0$ & $0$ & $0$ & $0$ & $0$ \tabularnewline
\hline
11 & $1$ & $-\sqrt{2}$ & $-\sqrt{2}$ & $\sqrt{2}$ & $\sqrt{2}$ & $0$ &  $0$ & $0$ & $0$ & $0$ & $0$ \tabularnewline
\hline
12 & $\frac{4}{3}$ & 0 & 0 & 0 & 0 & $W_1$ &  $W_2$ & $W_4$ & $W_5$ & $W_7$ & $W_8$ \tabularnewline
\hline
13 & $\frac{4}{3}$ & 0 & 0 & 0 & 0 & $W_2$ &  $W_4$ & $W_8$ & $W_8$ & $W_4$ & $W_2$ \tabularnewline
\hline
14 & $\frac{4}{3}$ & 0 & 0 & 0 & 0 & $W_4$ &  $W_8$ & $W_2$ & $W_2$ & $W_8$ & $W_4$ \tabularnewline
\hline
15 & $\frac{4}{3}$ & 0 & 0 & 0 & 0 & $W_5$ &  $W_8$ & $W_2$ & $W_7$ & $W_1$ & $W_4$ \tabularnewline
\hline
16 & $\frac{4}{3}$ & 0 & 0 & 0 & 0 & $W_7$ &  $W_4$ & $W_8$ & $W_1$ & $W_5$ & $W_2$ \tabularnewline
\hline
17 & $\frac{4}{3}$ & 0 & 0 & 0 & 0 & $W_8$ &  $W_2$ & $W_4$ & $W_4$ & $W_2$ & $W_8$ \tabularnewline
\hline
18 & $1$ & $T_1$ & $T_7$ & $T_3$ & $T_5$ & $0$ &  $0$ & $0$ & $0$ & $0$ & $0$ \tabularnewline
\hline
19 & $1$ & $T_3$ & $T_5$ & $T_7$ & $T_1$ & $0$ &  $0$ & $0$ & $0$ & $0$ & $0$ \tabularnewline
\hline
20 & $1$ & $T_5$ & $T_3$ & $T_1$ & $T_7$ & $0$ &  $0$ & $0$ & $0$ & $0$ & $0$ \tabularnewline
\hline
21 & $1$ & $T_7$ & $T_1$ & $T_5$ & $T_3$ & $0$ &  $0$ & $0$ & $0$ & $0$ & $0$ \tabularnewline
\hline
22 & $1$ & $T_7$ & $T_1$ & $T_5$ & $T_3$ & $0$ &  $0$ & $0$ & $0$ & $0$ & $0$ \tabularnewline
\hline
23 & $1$ & $T_5$ & $T_3$ & $T_1$ & $T_7$ & $0$ &  $0$ & $0$ & $0$ & $0$ & $0$ \tabularnewline
\hline
24 & $1$ & $T_3$ & $T_5$ & $T_7$ & $T_1$ & $0$ &  $0$ & $0$ & $0$ & $0$ & $0$ \tabularnewline
\hline
25 & $1$ & $T_1$ & $T_7$ & $T_3$ & $T_5$ & $0$ &  $0$ & $0$ & $0$ & $0$ & $0$ \tabularnewline
\hline
26 & $2$ & $0$ & $0$ & $0$ & $0$ & $0$ &  $0$ & $0$ & $0$ & $0$ & $0$ \tabularnewline
\hline
27 & $2$ & $0$ & $0$ & $0$ & $0$ & $0$ &  $0$ & $0$ & $0$ & $0$ & $0$ \tabularnewline
\hline
\end{tabular}\\

\begin{tabular}{|c|c|c|c|c|c|c|c|c|c|c|}
\hline
$\sqrt{32}S_{i,j}$ & 18 & 19 & 20 & 21 & 22 & 23 & 24 & 25 & 26 & 27\tabularnewline
\hline
\hline
0 & 1 & 1 & 1 & 1 & 1 & 1 & 1 & 1 & 2 & 2\tabularnewline
\hline
1 & -1 & -1 & -1 & -1 & -1 & -1 & -1 & -1 & -2 & -2\tabularnewline
\hline
2 & 0 & 0 & 0 & 0 & 0 & 0 & 0 & 0 & 0 & 0\tabularnewline
\hline
3 & 1 & 1 & 1 & 1 & 1 & 1 & 1 & 1 & -2 & -2\tabularnewline
\hline
4 & -1 & -1 & -1 & -1 & -1 & -1 & -1 & -1 & 2 & 2\tabularnewline
\hline
5 & $\sqrt{2}$ & -$\sqrt{2}$ & -$\sqrt{2}$ & $\sqrt{2}$ & $\sqrt{2}$ & -$\sqrt{2}$ & -$\sqrt{2}$ & $\sqrt{2}$ & 0 & 0\tabularnewline
\hline
6 & $-\sqrt{2}$ & $\sqrt{2}$ & $\sqrt{2}$ & $-\sqrt{2}$ & $-\sqrt{2}$ & $\sqrt{2}$ & $\sqrt{2}$ & $-\sqrt{2}$ & 0 & 0\tabularnewline
\hline
7 & 0 & 0 & 0 & 0 & 0 & 0 & 0 & 0 & 0 & 0\tabularnewline
\hline
8 & $T_1$ & $T_3$ & $T_5$ & $T_7$ & $T_7$ & $T_5$ & $T_3$ & $T_1$ & 0 & 0\tabularnewline
\hline
9 & $T_7$ & $T_5$ & $T_3$ & $T_1$ & $T_1$ & $T_3$ & $T_5$ & $T_7$ & 0 & 0\tabularnewline
\hline
10 & $T_3$ & $T_7$ & $T_1$ & $T_5$ & $T_5$ & $T_1$ & $T_7$ & $T_3$ & 0 & 0\tabularnewline
\hline
11 & $T_5$ & $T_1$ & $T_7$ & $T_3$ & $T_3$ & $T_7$ & $T_1$ & $T_5$ & 0 & 0\tabularnewline
\hline
12 & 0 & 0 & 0 & 0 & 0 & 0 & 0 & 0 & 0 & 0\tabularnewline
\hline
13 & 0 & 0 & 0 & 0 & 0 & 0 & 0 & 0 & 0 & 0\tabularnewline
\hline
14 & 0 & 0 & 0 & 0 & 0 & 0 & 0 & 0 & 0 & 0\tabularnewline
\hline
15 & 0 & 0 & 0 & 0 & 0 & 0 & 0 & 0 & 0 & 0\tabularnewline
\hline
16 & 0 & 0 & 0 & 0 & 0 & 0 & 0 & 0 & 0 & 0\tabularnewline
\hline
17 & 0 & 0 & 0 & 0 & 0 & 0 & 0 & 0 & 0 & 0\tabularnewline
\hline
18 & $P_1$ & $P_3$ & $P_5$ & $P_7$ & $P_9$ & $P_{11}$ & $P_{13}$ & $P_{15}$ & 0 & 0\tabularnewline
\hline
19 & $P_3$ & $P_9$ & $P_{15}$ & $P_{11}$ & $P_5$ & $P_{1}$ & $P_{7}$ & $P_{13}$ & 0 & 0\tabularnewline
\hline
20 & $P_5$ & $P_{15}$ & $P_{7}$ & $P_{3}$ & $P_{13}$ & $P_{9}$ & $P_{1}$ & $P_{11}$ & 0 & 0\tabularnewline
\hline
21 & $P_7$ & $P_{11}$ & $P_{3}$ & $P_{15}$ & $P_{1}$ & $P_{13}$ & $P_{5}$ & $P_{9}$ & 0 & 0\tabularnewline
\hline
22 & $P_9$ & $P_{5}$ & $P_{13}$ & $P_{1}$ & $P_{15}$ & $P_{3}$ & $P_{11}$ & $P_{7}$ & 0 & 0\tabularnewline
\hline
23 & $P_{11}$ & $P_{1}$ & $P_{9}$ & $P_{13}$ & $P_{3}$ & $P_{7}$ & $P_{15}$ & $P_{5}$ & 0 & 0\tabularnewline
\hline
24 & $P_{13}$ & $P_{7}$ & $P_{1}$ & $P_{5}$ & $P_{11}$ & $P_{15}$ & $P_{9}$ & $P_{3}$ & 0 & 0\tabularnewline
\hline
25 & $P_{15}$ & $P_{13}$ & $P_{11}$ & $P_{9}$ & $P_{7}$ & $P_{5}$ & $P_{3}$ & $P_{1}$ & 0 & 0\tabularnewline
\hline
26 & 0 & 0 & 0 & 0 & 0 & 0 & 0 & 0 & $2\sqrt{2}$ & $-2\sqrt{2}$\tabularnewline
\hline
27 & 0 & 0 & 0 & 0 & 0 & 0 & 0 & 0 & $-2\sqrt{2}$ & $2\sqrt{2}$\tabularnewline
\hline
\end{tabular}

\end {document}